\documentclass[11pt,reqno]{amsart}

\usepackage[T1]{fontenc}
\usepackage[utf8]{inputenc}
\usepackage{lmodern}
\usepackage{physics}
\usepackage{geometry}
\usepackage{xcolor}
\usepackage{amsmath,amssymb,amsthm,mathtools,bm}
\usepackage{mathrsfs}

\usepackage[hidelinks]{hyperref}
\hypersetup{
    colorlinks=false,
    pdfborder={0 0 0}
}

\numberwithin{equation}{section}

\newtheorem{theorem}{Theorem}[section]
\newtheorem{proposition}[theorem]{Proposition}
\newtheorem{lemma}[theorem]{Lemma}
\newtheorem{sublemma}[theorem]{Sublemma}
\newtheorem{corollary}[theorem]{Corollary}

\newtheorem*{property*}{Property}

\theoremstyle{definition}
\newtheorem{definition}[theorem]{Definition}

\newtheorem{example}[theorem]{Example}
\theoremstyle{remark}
\newtheorem{remark}[theorem]{Remark}

\newcommand{\cone}{\mathcal{C}}
\newcommand{\indic}{\mathbf{1}}
\newcommand{\paraop}{\mathscr{D}}
\newcommand{\R}{\mathbb R}
\newcommand{\E}{\mathbb E}

\newcommand{\sfp}{\mathsf{p}}
\newcommand{\sfq}{\mathsf{q}}

\newcommand{\Opendenseset}{\mathcal{U}}

\newcommand{\PathSpace}{\mathcal Q}
\newcommand{\SmoothCore}{\mathcal S}
\newcommand{\Domain}{\Omega}
\newcommand{\LinearOp}{\mathscr L}

\newcommand{\SigN}{\Sigma_N}

\newcommand{\Hopf}{\mathscr H}

\newcommand{\xionesp}{\xi^{\mathrm{1sp}}}

\newcommand{\bmlambda}{\bm{\lambda}}
\newcommand{\bmDelta}{\bm{\Delta}}
\newcommand{\bmI}{\bm{I}}
\newcommand{\bmR}{\bm{R}}

\newcommand{\bmp}{\bm{p}}

\newcommand{\bmx}{\bm{x}}

\newcommand{\Residual}{\mathcal{R}}

\newcommand{\Hoelder}[1]{C^{#1}}
\newcommand{\ParaHoelder}[1]{C_{b}^{1+{#1}/2,2+{#1}}}
\newcommand{\ParaHoelderzero}[1]{C_b^{{#1}/2,{#1}}}

\title{A Concavity Theorem for the Parisi PDE}
\author{Fu-Hsuan Ho}
\address{Department of Mathematics, Weizmann Institute of Science, Israel}
\email{fu-hsuan.ho@weizmann.ac.il}

\begin{document}

\begin{abstract}
We prove that the map sending the diffusion profile to the solution of 
a time-changed Parisi PDE evaluated at time-space $(0,0)$ is concave.
This result strengthens the raywise concavity result proven by Auffinger and Chen (2016).
As an application, for the balanced multispecies Ising spin glasses, the lower bound of Bates and Sohn (2025)
matches the Hopf-type upper bound given by the Hamilton--Jacobi framework developed by Mourrat, Chen and Xia.
\end{abstract}

\subjclass[2020]{82B44, 60K35, 35K55}
\keywords{Parisi PDE, spin glasses, multispecies spin glasses, Hopf formula}

\maketitle

\section{Introduction}
 
In the classical Sherrington--Kirkpatrick (SK) model, 
the celebrated work of Parisi \cite{Parisi1979, Parisi1980Sequence}
predicted that its free energy is given by the so-called Parisi formula.
Guerra \cite{Guerra2003BrokenRSB} and Talagrand \cite{Talagrand2006ParisiFormula}
established Parisi's prediction rigorously, and Panchenko \cite{Panchenko2014MixedPSpin} extended the Parisi formula to mixed $p$-spin models.

Multispecies spin glasses generalize the SK model by partitioning the spins into \(D\) subsets, called species.
Mathematically, the model is defined as follows.

\begin{definition}[Multispecies mixed Ising model]
\label{def:multispecies}
Let $N\geq 1$, $D\geq 1$ and $\Sigma_N=\{\pm 1\}^N$.
For all $\bmx\in\mathbb R^D$, define the covariance function
\[
    \xi(\bmx)
    =
    \sum_{k\geq 1}
    \xi_k(\bmx)
    =
    \sum_{k\geq 1}
    \sum_{\substack{\bm p\in\mathsf P_k}}
    \Delta_{\bmp}^2\prod_{d=1}^D x_d^{p_d}.
\]
where $\bmDelta=(\Delta_{\bmp}^2:\bmp\in\mathsf P_k,\, k\geq 1)$ is a family of nonnegative coefficients and
where
\[
    \mathsf P_k
    =
    \{\bmp=(p_1,\ldots,p_D)\in\mathbb Z_+^D:
      |\bmp|:=p_1+\cdots+p_D=k\}.
\]
We also assume that $\xi((1+\varepsilon)\bm{1})<\infty$ for some $\varepsilon>0$.

For \(\sigma,\tau\in\SigN\), define the species overlap vector by
\[
    R_{N,d}(\sigma,\tau)
    =
    \frac1N\sum_{i\in I_{N,d}}\sigma_i\tau_i,
    \qquad
    \bmR_N(\sigma,\tau)=(R_{N,d}(\sigma,\tau))_{d=1}^D,
\]
where \(\bmI_N=\{I_{N,1},\ldots,I_{N,D}\}\) is a partition of $\{1,\ldots,N\}$.

The Hamiltonian of the \emph{multispecies mixed Ising model} is a centered Gaussian process
 \(H_N=(H_N(\sigma))_{\sigma\in\SigN}\)
with covariance
\[
    \mathbb E H_N(\sigma)H_N(\tau)
    =
    N\xi(\bmR_N(\sigma,\tau)),
    \qquad \sigma,\tau\in\SigN.
\]
\end{definition}

Throughout the paper, we assume that the proportion of each block in the
partition is nondegenerate as $N \to \infty$. More precisely,
letting
\[
    N_{N,d}=|I_{N,d}|,
    \qquad
    \lambda_{N,d}=N_{N,d}/N,
    \qquad
    \bmlambda_N=(\lambda_{N,1},\ldots,\lambda_{N,D}),
\]
we assume that \(\bmlambda_N\to\bmlambda=(\lambda_1,\ldots,\lambda_D)\), where
\(\lambda_d>0\) for every \(d\in\{1,\ldots,D\}\) and
\(\sum_{d=1}^D\lambda_d=1\). 

\begin{remark}
When the context is clear, we simply refer to the model in Definition~\ref{def:multispecies} as the \emph{multispecies model}.
Note that a \emph{multispecies model} can be specified by $(\xi,\bmlambda_N)$.
We also note that, when $D=1$, Definition~\ref{def:multispecies} reduces to the mixed $p$-spin model, which will be referred to 
as the \emph{one-species model}. In this case $\bmlambda_N = \lambda_1=1$, 
so the model is defined by $\xi$.
\end{remark}

Given the parameters $(\xi,\bmlambda_N)$, the free energy
of the multispecies model is defined as
\begin{equation}
\label{eq:hopfapp-positive-free-energy}
    F_N(\xi,\bmlambda_N)
    =
    \frac1N\mathbb E\log
    \int_{\SigN}\exp(H_N(\sigma))\dd{P_N(\sigma)},
\end{equation}
where \(P_N=(\frac{1}{2}\delta_{-1}+\frac{1}{2}\delta_1)^{\otimes N}\) is the uniform probability measure on \(\SigN\).
When $D=1$, we denote by $F_N(\xi)$ the free energy of the one-species model with the covariance function $\xi$.

As in the one-species case, the question of interest is to compute the limiting free energy.
The first rigorous result for the limiting free energy of multispecies spin glasses was obtained by Barra, Contucci, Mingione, and Tantari \cite{MR3311887}
and Panchenko \cite{MR3433586}. However, the method of Barra et al. \cite{MR3311887} requires 
convexity of $\xi$, so it does not cover, for example, the bipartite model,
where $D=2$ and $\xi(x_1,x_2)=x_1x_2$.

To tackle nonconvex models, Mourrat and coauthors developed a Hamilton--Jacobi framework to 
study the free energy of spin-glasses with possibly nonconvex covariance.
For a pedagogical discussion of this approach, see \cite{DominguezMourrat2024HJApproach}.
In \cite{Mourrat2021NonconvexInteractions}, Mourrat proved an upper bound\footnote{Note that Mourrat and coauthors' convention of the free energy has a sign difference with the usual convention.
For consistency of the introduction, we align with the usual convention.
}
for the free energy of the bipartite model, 
and he later proved the same type of upper bound for nonconvex vector spin glasses in \cite{Mourratvector}. 

On the other hand, for the lower bound, less is known in the literature. 
When $\xi$ has permutation invariant coordinates, Issa
proved (see Theorem~1.4 in \cite{IssaPermutation}) a lower bound for the limit inferior of the free energy via a one-species model constructed from symmetrizing $\xi$.
Recently, 
Bates and Sohn \cite{BatesSohnBalanced} showed that
for the \emph{balanced} multispecies models,
the limit inferior of the free energy is lower bounded by the Parisi formula of a one-species mixed model.
\begin{definition}
    \label{def:balanced}
    Let $\xi$ be the covariance function of a multispecies model.
We say that the pair $(\xi,\bmlambda)$ is \emph{balanced} if, for every $k\ge 1$,
\begin{equation}
    \label{eq:balanced}
    \partial_{x_1}\xi_k(\bmlambda)
    =
    \partial_{x_2}\xi_k(\bmlambda)
    =
    \cdots
    =
    \partial_{x_D}\xi_k(\bmlambda).
\end{equation}
\end{definition}

\begin{remark}
\label{rem:different-conventions}
Definition~\ref{def:balanced} is equivalent to Bates--Sohn's balance condition (H3). 
When \(k=1\), Definition~\ref{def:balanced} collapses to the requirement that
\(\Delta_d^2\) is independent of \(d=1,\ldots,D\).
For $k\geq 2$, writing
\[
\xi_k(\bmx)=\sum_{d_1,\ldots,d_k=1}^D
\tilde\Delta^2_{d_1,\ldots,d_k}
x_{d_1}\cdots x_{d_k},
\]
with $\tilde\bmDelta$ the symmetrization of $\bmDelta$, one has
\[
\partial_{x_d}\xi_k(\bmlambda)
=
k\sum_{d_2,\ldots,d_k}
\Delta^2_{d,d_2,\ldots,d_k}
\lambda_{d_2}\cdots\lambda_{d_k},
\]
which is independent of $d$.
\end{remark}

We recall a few examples of the balanced models.

\begin{example}
As explained in Example 1.2(a) in \cite{BatesSohnBalanced}, the balanced SK model
satisfies Definition~\ref{def:balanced}. In general, a multispecies mixed Ising model with permutation invariant $\xi$ is balanced.
\end{example}

Our goal is to match Bates and Sohn's lower bound for the balanced models with Mourrat's upper bound.
\begin{theorem}
\label{thm:balancedFE}
Assume that $(\xi,\bmlambda)$ is balanced. Then,
\begin{equation}
    \lim_{N\rightarrow\infty} F_N(\xi,\bmlambda_N)
    =
    \lim_{N\rightarrow\infty} F_N(\xionesp)
\end{equation}
where $F_N(\xionesp)$ is the free energy of the one-species model
with the covariance function $\xionesp(x) = \sum_{k\geq 1} \xi_k(\bmlambda) x^k$.
\end{theorem}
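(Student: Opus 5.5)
The argument has two sides. The lower bound $\liminf_N F_N(\xi,\bmlambda_N)\ge \lim_N F_N(\xionesp)$ is exactly the Bates--Sohn theorem for balanced models: they bound the liminf of the multispecies free energy below by the Parisi formula of the one-species model with covariance $\xionesp$, and by Panchenko's theorem that Parisi formula equals $\lim_N F_N(\xionesp)$. The real work is therefore the matching upper bound. For this I will use Mourrat's Hopf-type upper bound from the Hamilton--Jacobi framework (as extended to vector/multispecies settings by Mourrat, Chen and Xia). It should give
\[
\limsup_{N\to\infty} F_N(\xi,\bmlambda_N)\le \sup_{\mu}\inf_{\nu}\Bigl\{\psi(\nu)+\text{(pairing/}\xi^*\text{ terms in }\mu,\nu)\Bigr\}.
\]
Here $\psi$ is the multispecies initial condition, namely the free energy of a decoupled enriched model with external fields indexed by increasing paths $\nu$ in $\R_+^D$. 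In terms of our PDE, $\psi(\nu)$ is a $\bmlambda$-weighted sum over species $d$ of the Parisi PDE solution at $(0,0)$ with diffusion profile $\nu_d$. I expect that, for Ising spins, the Hopf formula's validity requires exactly that this initial condition be concave in the path variable. The main theorem of the paper supplies this: the map from diffusion profile to Parisi PDE solution at $(0,0)$ is concave.

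Given the Hopf bound, the key step is to evaluate the sup--inf. My plan is to restrict the inner infimum to \emph{species-symmetric} paths, i.e.\ $\nu_d(t)=\nu(t)$ for all $d$, possibly rescaled by $\lambda_d$, and likewise for the dual variable. On symmetric paths, $\psi$ reduces to the one-species Parisi functional's initial term $\E\log\cosh$-type PDE value. The $\xi$-dependent terms reduce via the balance condition \eqref{eq:balanced}. Since $\nabla\xi_k(\bmlambda)$ is proportional to $\bm 1$ for each $k$, the relevant Legendre/pairing terms $\langle \nabla\xi(\bmlambda q),\cdot\rangle$ computed along the diagonal direction coincide with $(\xionesp)'(q)$. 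Homogeneity of each $\xi_k$ gives $\nabla\xi_k(q\bmlambda)=q^{k-1}\nabla\xi_k(\bmlambda)$, so this holds uniformly in $q$.

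Restricting the infimum only increases its value, so it is the wrong direction on its own. I therefore need the outer supremum, over $\mu$, to be attained or approximated by symmetric objects. I would argue this by a symmetrization and concavity argument. For a fixed $\mu$, the inner functional is convex in $\nu$, because $\psi$ is concave and the sup--inf was arranged accordingly. Averaging over the relevant symmetry, which exchanges species coordinates weighted by $\bmlambda$ in the balanced structure, then reduces to the symmetric case. Concavity of $\psi$ from the main theorem is what allows Jensen's inequality over the symmetrized profile.

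After this reduction the sup--inf becomes the one-species Parisi variational formula for $\xionesp$, equivalently its Hopf form. That form equals $\lim_N F_N(\xionesp)$ by the Guerra--Talagrand--Panchenko theorem, together with the known Hopf--Parisi equivalence for convex one-species $\xionesp$ (convex on $\R_+$ since its coefficients are nonnegative). The main obstacle is the symmetrization step. The balance condition is not a genuine permutation symmetry of $\xi$, so I must check that the Hopf functional depends on $\xi$ only through quantities that balance makes diagonal, namely gradients evaluated at points $q\bmlambda$. Alternatively, I would exhibit an explicit near-optimal pair by taking the optimal one-species Parisi measure $\zeta$ and setting the multispecies path to $\nu_d(t)=\lambda_d^{-1}\cdot(\text{one-species path})$, then verifying the inequality directly using concavity.
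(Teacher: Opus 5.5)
Your Bates--Sohn lower bound is right, and so is your use of the concavity theorem to make the Hopf upper bound available. The gap is in how you evaluate the sup--inf. You put the cost-free restriction and the Jensen step on the wrong variables. The step you call the main obstacle is showing that the optimization over the $\xi$-variable $\mu$ is attained at symmetric objects by ``averaging over the relevant symmetry.'' That step cannot be carried out, for three reasons. First, $\mu$ enters through $\int_0^1\xi(\mu)$, which for the nonconvex models of interest (e.g.\ $\xi(x_1,x_2)=x_1x_2$) has no convexity or concavity to exploit. Second, the concavity theorem gives Jensen only in the variable that carries the Parisi functional. Third, as you note yourself, balance is not a permutation symmetry, so there is nothing to average over.

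Your sign bookkeeping is also off. With $\psi$ the concave enriched free energy, the Hopf upper bound is an $\inf_\mu\sup_\nu$, and $\psi(\nu)$ minus the pairing is concave, not convex, in $\nu$. In that orientation, restricting $\nu$ is indeed the wrong direction, while restricting $\mu$ costs nothing. Your explicit-pair alternative has the same defect: fixing the $\psi$-variable cannot give an upper bound. It is the $\xi$-variable that should be fixed, and then all $\psi$-variables are controlled by concavity.

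Concretely, in the paper's sign convention (HJ1) reads $\limsup_N F_N(\xi,\bmlambda_N)\le\frac12\xi(\bmlambda)-\Hopf(\xi,\bmlambda)$. Here $\Hopf(\xi,\bmlambda)=\sup_{\sfp}\inf_{\sfq}\{\sum_d\lambda_d\psi(\sfq_d)-\sum_d\int_0^1\sfp_d\sfq_d+\frac12\int_0^1\xi(\sfp)\}$ with $\psi$ convex, so what is needed is a lower bound on $\Hopf(\xi,\bmlambda)$. The argument runs as follows.
\begin{enumerate}
\item Restrict the outer variable to $\hat\sfp=(\lambda_1\sfp,\ldots,\lambda_D\sfp)$ with $\sfp\in\PathSpace_\infty$. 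Homogeneity of each $\xi_k$ gives $\xi(\hat\sfp)=\xionesp(\sfp)$.
\item With this choice the pairing becomes $\int_0^1\sfp\,\bar\sfq$, where $\bar\sfq=\sum_d\lambda_d\sfq_d\in\PathSpace_\infty$.
\item For every $\sfq\in\PathSpace_\infty^D$, Jensen gives $\sum_d\lambda_d\psi(\sfq_d)\ge\psi(\bar\sfq)$. Hence the inner infimum dominates the one-species one, and $\Hopf(\xi,\bmlambda)\ge\Hopf(\xionesp)$.
\item Since $\xi(\bmlambda)=\xionesp(1)$, (HJ2) finishes the proof.
\end{enumerate}
Balance is never used in this upper bound. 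The function $\xi$ enters the Hopf functional only through $\xi(\sfp)$, and homogeneity suffices, so your analysis of $\nabla\xi_k(\bmlambda)\propto\bm{1}$ is unnecessary. Balance is needed only for the Bates--Sohn lower bound.
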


\begin{remark}
\label{rem:BatesSohnQuestion}
Theorem~\ref{thm:balancedFE} affirmatively answers the question raised by Bates and Sohn in Remark~1.9 in \cite{BatesSohnBalanced}.
\end{remark}

To find the matching upper bound, a key input is the convexity of a
functional, which we will define in the next paragraph.
This convexity will then imply the convexity of \eqref{eq:multispecies-initial-cond},
and will allow us to apply a Hopf upper bound, see \eqref{eq:HJ1} below, under the
Hamilton--Jacobi framework.

For all $p\in [1,\infty)$, introduce the path space
\begin{equation}
    \label{eq:hopfapp-scalar-path-space}
    \PathSpace_p
    = 
    \Bigl\{\sfq\in L^p([0,1);\mathbb R_+)
    \,\Big|\,
    \sfq \text{ is c\`adl\`ag and nondecreasing}
    \Bigr\}.
\end{equation}
When \(\sfq\in\PathSpace_\infty:=\PathSpace_1\cap L^\infty\), set $\sfq(1):=\lim_{r\uparrow1}\sfq(r)$.

For $\sfq\in\PathSpace_\infty$, define the functional
\begin{equation}
    \label{eq:def-psi}
    \psi(\sfq)=\sfq(1)-\Phi^{2\sfq}(0,0),
\end{equation}
where \(\Phi^\sfq\) is the
solution of a Parisi PDE
\begin{equation}
  \label{eq:q-pde}
\begin{cases}
-\partial_\tau\Phi^\sfq(\tau,x)
=
\dfrac12\left(
\partial_{xx}\Phi^\sfq(\tau,x)
+
\sfq^{-1}(\tau)(\partial_x\Phi^\sfq(\tau,x))^2
\right),
& \tau\in [0,\sfq(1)),
\\[1ex]
\Phi^\sfq(\sfq(1),x)=\log\cosh x,
& x\in\R.
\end{cases}
\end{equation}
Here, \(\sfq^{-1}:[0,\sfq(1)]\to[0,1]\) denotes the right generalized inverse of \(\sfq\).

We now state the convexity of the functional $\psi$.
\begin{theorem}
\label{thm:chen-initial-convexity}
The functional $\psi$ admits a unique \(L^1\)-continuous extension to \(\mathcal Q_1\).
This extension, still denoted by \(\psi\), is convex.
\end{theorem}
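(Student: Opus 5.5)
The strategy is to write $\psi(\sfq)=\sfq(1)-\Phi^{2\sfq}(0,0)$ and note that $\sfq\mapsto\sfq(1)$ is affine on $\PathSpace_\infty$. Both assertions of Theorem~\ref{thm:chen-initial-convexity} then reduce to statements about $G(\sfq):=\Phi^{2\sfq}(0,0)$: an $L^1$-Lipschitz estimate, which gives the extension, and concavity of $G$, which gives convexity of $\psi$.

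First I change variables to make the dependence on $\sfq$ transparent. For a step function $\sfq$, put $u(t,x)=\Phi^{2\sfq}(2\sfq(t),x)$ on $[0,1]$. Formally this solves
\[
-\partial_t u=\dot{\sfq}(t)\bigl(\partial_{xx}u+t(\partial_x u)^2\bigr),\qquad u(1,x)=\log\cosh x .
\]
Here $\dot{\sfq}\,\dd t$ is read as the measure $\dd\sfq$. So in the time-changed picture the Parisi parameter is the identity $t\mapsto t$, and $\sfq$ only plays the role of a diffusion profile. For $\sfq$ taking values $q_0\le\dots\le q_K$ on a partition $0=t_0<t_1<\dots<t_{K+1}=1$, the quantity $G(\sfq)$ is given by the usual recursive Cole--Hopf formula. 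The recursion uses Gaussian increments of variance $2(q_k-q_{k-1})$ and exponents $t_k$. Equivalently, $G(\sfq)=\E\log\sum_\alpha v_\alpha\cosh(g(\alpha))$ over a Ruelle cascade with weights $(t_k)$, where $g$ is Gaussian with covariance $2\sfq(t_{\alpha\wedge\alpha'})$.

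Next I prove the Lipschitz bound $|G(\sfq)-G(\sfq')|\le C\int_0^1|\sfq-\sfq'|\,\dd t$ for step functions. I will do this by Gaussian interpolation between the two covariance structures on a common refinement of the partitions. The derivative in the covariance is an average of $\E\langle\partial_x\Phi\,\partial_x\Phi\rangle$-type terms with respect to the cascade Gibbs measure, and these are bounded because $|\partial_x\Phi|\le1$, which follows from the maximum principle for the terminal datum $\log\cosh$. Since step functions are $L^1$-dense in $\PathSpace_1$ and $\sfq\mapsto\sfq(1)$ is also controlled in the appropriate sense on $\PathSpace_\infty$, the extension of $\psi$ to $\PathSpace_1$ exists and is unique. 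One caveat: $\sfq(1)$ alone is not $L^1$-continuous. I therefore expect to need to combine the two terms, writing $\sfq(1)-G(\sfq)$ as an integral against $\dd\sfq$ of a quantity that tends to $1$ near $t=1$, so that only $\int|\sfq-\sfq'|$ appears. This is the first technical point to check.

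The main step, and the expected obstacle, is concavity of $G$ along segments $\sfq_s=(1-s)\sfq^0+s\sfq^1$. By density and continuity it suffices to treat $\sfq^0,\sfq^1$ that are step functions on a common grid, so that $\sfq_s$ stays on that grid. The parameters $t_k$ are then fixed and only the increments $a_k(s)=2(q_k(s)-q_{k-1}(s))\ge0$ move, and they move affinely in $s$. I will use the Auffinger--Chen stochastic control representation of the time-changed equation, written with a time-changed Brownian motion:
\[
G(\sfq)=\sup_{v}\E\Bigl[\log\cosh\Bigl(\int_0^1 t\,v_t\,\dd(2\sfq)(t)+W_{2\sfq(1)}\Bigr)-\tfrac12\int_0^1 t\,v_t^2\,\dd(2\sfq)(t)\Bigr].
\]
For a fixed control $v$ the penalty is linear in $\sfq$. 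The difficulty is that the noise term depends on $\sfq$ nonlinearly, through the filtration. My first attempt is to verify concavity directly on the finite-dimensional level as a function of $(a_k)$. I would compute the Hessian by Gaussian integration by parts at each level of the recursion, and aim to show that the second variation in a direction $h$ is minus a sum of conditional variances of the form $\mathrm{Var}_{\text{Gibbs}}\bigl(\sum_k h_k(\cdots)\bigr)$, exploiting $t\le1$ and $\partial_{xx}\Phi\ge-(\partial_x\Phi)^2$-type identities. If this does not close, the fallback is to couple the Brownian motions for $\sfq^0$ and $\sfq^1$ via independent time-changed pieces, $W^{(s)}=\sqrt{1-s}\,W^0\oplus\sqrt{s}\,W^1$ level by level. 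I would then show that an optimal control for $\sfq_s$ yields admissible suboptimal controls for the endpoints, which gives $G(\sfq_s)\ge(1-s)G(\sfq^0)+sG(\sfq^1)$ by Jensen applied to the concave-in-drift structure. Either way, concavity of $G$ then passes to all of $\PathSpace_1$ by the $L^1$ continuity established above, and convexity of $\psi$ follows.
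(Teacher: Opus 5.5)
\textbf{Gap: concavity.} The gap is the concavity of $G(\sfq)=\Phi^{2\sfq}(0,0)$, which is the real content of the theorem. Neither of your two routes establishes it.

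Your first route looks for an identity writing the second variation as minus a sum of conditional variances, using only $t\le 1$ and pointwise relations such as $\partial_{xx}\Phi\ge-(\partial_x\Phi)^2$. Those ingredients are equally available if the recursion is started at an arbitrary point $x$ instead of $0$, but concavity fails there. For the constant path $\sfq\equiv q$ one has $\Phi^{2\sfq}(0,x)=\E\,g(x+\sqrt{2q}Z)$ with $g=\log\cosh$. Its second $q$-derivative is $\E\,g''''(x+\sqrt{2q}Z)$, which tends as $q\downarrow 0$ to $g''''(x)=g''(x)\bigl(6g'(x)^2-2\bigr)$, and this is positive at $x=1$. So the sign must use the evenness at $x=0$, and it comes from a correlation inequality, not a variance. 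At one level, $\E\,g''''(\sigma Z)=-\sigma^{-2}\operatorname{Cov}\bigl(g'(\sigma Z)^2,Z^2\bigr)\le 0$, because both functions are nondecreasing in $|Z|$.

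For a general profile you have to propagate this structure through the nonlinear dynamics. That is what the paper does, and nothing in your plan plays this role. The paper rewrites the second variation as
\[
D^2\Psi^\gamma[h,h](s,0)=-\int_s^1\Bigl[A(s)\Bigl(\int_s^\sigma h\Bigr)^2+\int_s^\sigma\Bigl(\int_\rho^\sigma h\Bigr)^2\partial_\rho A(\rho)\,d\rho\Bigr]d\sigma,\qquad A=A^{\gamma,s,\sigma,0}.
\]
Here $A(s)=\frac14\partial_{xx}w^\gamma_{s,\sigma}(0)$, and $\partial_\rho A$ is a $P^\gamma$-average of products of odd functions such as $\partial_x\Psi^\gamma(\rho,\cdot)\,\partial_x w^\gamma_{\rho,\sigma}$. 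Both are nonnegative because $P^\gamma_{\rho,\sigma}$ maps the cone $\cone$ into itself (Proposition~\ref{prop:cone-preservation}). Hence $w^\gamma_{\rho,\sigma}=P^\gamma_{\rho,\sigma}[(\partial_x\Psi^\gamma(\sigma,\cdot))^2]$ is even, has its minimum at $0$, and is nondecreasing on $[0,\infty)$. The cone preservation is proved by a maximum principle for $\partial_x P^\gamma_{\rho,\sigma}f$ on the half-line with zero boundary value at $x=0$. It is the continuous counterpart of the FKG step in Auffinger--Chen.

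\textbf{The fallback argues in the wrong direction.} Write $J(\sfq,v)$ for the expectation inside your supremum. Starting from an optimal control for $\sfq_s$ and deriving admissible controls $\tilde v^i$ for the endpoints only gives lower bounds $G(\sfq^i)\ge J(\sfq^i,\tilde v^i)$. Since $\log\cosh$ is convex in its argument, Jensen can then at best give $G(\sfq_s)\le(1-s)G(\sfq^0)+sG(\sfq^1)$. That is the Auffinger--Chen argument for convexity in the distribution function $\zeta$, where the noise does not depend on the parameter. A lower bound on the supremum at $\sfq_s$ would instead require building a control for $\sfq_s$ from the endpoint optimizers, together with a payoff that is concave along the interpolation. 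No such concavity is available: $\sfq$ enters through the argument of the convex function $\log\cosh$ and through the law of the noise.

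By contrast, the $L^1$-extension part of your plan is sound once the two terms are combined. In the Gaussian interpolation the diagonal term contributes exactly $\delta\sfq(1)$, which cancels the $\sfq(1)$ in $\psi$. The paper obtains the same Lipschitz bound, with constant $1$, by extending both PDEs to a common horizon with $\zeta_i=1$ and applying the maximum principle. Without a genuine concavity argument, however, the theorem is not proved.
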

\begin{remark}
    Theorem~\ref{thm:chen-initial-convexity} holds for any multispecies model
    introduced in Definition~\ref{def:multispecies}.
    However, even if one believes that the Hopf upper bound, see \eqref{eq:HJ1} below, is sharp beyond balanced models,
one might need new ideas to find the matching lower bound.
\end{remark}
Since \(\sfq\mapsto \sfq(1)\) is affine and \(\sfq\mapsto 2\sfq\) is linear, the convexity of
\(\psi\) is equivalent to the concavity of \(\sfq\mapsto \Phi^{\sfq}(0,0)\). 
On the other hand, from the form of the Parisi PDE \eqref{eq:q-pde}, the dependence of $\Phi^{\sfq}$ on $\sfq$ seems to be 
highly nonlinear.  
To facilitate the analysis, we thus consider a change of variables. 
Formally, on a dense class of
paths with sufficient regularity, the change of variables
\[
    \Psi^{\dot\sfq}(s,x)=\Phi^\sfq(\sfq(s),x)
\]
transforms the problem into the concavity of
\(\gamma\mapsto\Psi^\gamma(0,0)\). 
Moreover, the PDE for \(\Psi^\gamma\) is a time-changed Parisi PDE with diffusion profile \(\gamma\).
The precise formulation will be given in Section~\ref{sec:time-changed-parisi-pde} below, and we will transform
the problem into Theorem~\ref{thm:concavity}.

We remark that the concavity of $\sfq\mapsto\Phi^{\sfq}(0,0)$ should be distinguished from the strict convexity of
the map $\mu\mapsto \Phi^\mu(0,0)$ on $\mu\in\mathrm{Pr}([0,1])$,
proven by Auffinger and Chen~\cite{AuffingerChen2015}.
Here, $\mathrm{Pr}([0,1])$ denotes the set of probability measures on $[0,1]$.
Indeed, if we restrict $\PathSpace_\infty$ to the set of quantile functions on $[0,1]$, then 
$\Phi^\mu(0,0)$ can be viewed as a functional on $\mu\in\mathrm{Pr}([0,1])$.
The strict convexity of $\mu\mapsto \Phi^\mu(0,0)$ was used in \cite{AuffingerChen2015}
to establish the uniqueness of the Parisi measure. 

While the previous exposition focuses on Ising multispecies models, 
the multispecies models also admit a spherical version, where the spins are constrained to lie on the sphere.
In particular, Bates and Sohn's lower bound also holds for the balanced spherical multispecies models.
For the earlier works, Baik and Lee \cite{BaikLee2020} determined the limiting free energy of the bipartite spherical model.
Bates and Sohn \cite{BatesSohn2022FreeEnergyMultiSpeciesSpherical} proved a Parisi formula for multispecies mixed spherical models under
a convexity assumption in the upper bound.  
Subag \cite{Subag2025TAPMultiSpeciesSphericalI,Subag2023TAPMultiSpeciesSphericalII}
developed a TAP
approach for multispecies spherical models and used it to compute the limiting free
energy for pure multispecies spherical models under appropriate convergence
assumptions.
Although the present paper does not treat the spherical case, the method is expected to extend to that setting. 
This is left for future work.

\subsection*{Organization}

Section~\ref{sec:hopf-formula-application} applies Theorem~\ref{thm:chen-initial-convexity} to prove 
Theorem~\ref{thm:balancedFE}.
Section~\ref{sec:time-changed-parisi-pde} introduces the time-changed Parisi PDE and reformulates the problem into Theorem~\ref{thm:concavity}.
Then, the proof of
Theorem~\ref{thm:concavity} occupies Section~\ref{sec:pde-notation}--Section~\ref{sec:cone-preservation},
and Section~\ref{sec:heuristics} provides the proof heuristic and outline of Theorem~\ref{thm:concavity}.
Finally, Section~\ref{sec:proof-chen-initial-convexity} proves
Theorem~\ref{thm:chen-initial-convexity}. 

\section{Proof of Theorem~\ref{thm:balancedFE}}
\label{sec:hopf-formula-application}

For a balanced pair $(\xi,\bmlambda)$,
as mentioned in the introduction, Theorem~1.3 in \cite{BatesSohnBalanced}
showed that
\begin{equation}
    \liminf_{N\rightarrow\infty} F_N(\xi,\bmlambda_N)
    \geq 
    \lim_{N\rightarrow\infty} F_N(\xionesp).
    \label{eq:balanced-superior}
\end{equation} 
Therefore, it suffices to show that
\begin{equation}
    \limsup_{N\rightarrow\infty} F_N(\xi,\bmlambda_N)
    \leq 
    \lim_{N\rightarrow\infty} F_N(\xionesp).
    \label{eq:balanced-superior}
\end{equation} 

\subsection*{Inputs from the Hamilton--Jacobi framework}
We first record the two consequences of the Hamilton--Jacobi framework.
Define the Hopf formula with the parameters $(\xi,\bmlambda)$.
\begin{equation}
    \Hopf(\xi,\bmlambda)
    =
    \sup_{\sfp\in \PathSpace_\infty^D}
    \inf_{\sfq\in \PathSpace_\infty^D}
    \left\{
    \sum_{d=1}^D \lambda_d \psi(\sfq_d)
    -
    \sum_{d=1}^D \int_0^1 \sfp_d(s)\sfq_d(s)\dd{s}
    +
    \frac{1}{2}
    \int_0^1 \xi(\sfp(s)) \dd{s}
    \right\}.
\end{equation}
For a one-species model with the parameter $\xi$, we 
define
\begin{equation}
    \Hopf(\xi)
    =
    \sup_{\sfp\in \PathSpace_\infty}
    \inf_{\sfq\in \PathSpace_\infty}
    \left\{
    \psi(\sfq)
    -
    \int_0^1 \sfp(s)\sfq(s)\dd{s}
    +
    \frac{1}{2}
    \int_0^1 \xi(\sfp(s)) \dd{s}
    \right\}.
\end{equation}

With the notation above, we now state the two 
consequences where the sign and normalization are converted to the usual conventions.
\begin{enumerate}
    \item Given a multispecies model with parameters $(\xi,\bmlambda)$,
    we have 
    \begin{equation}
    \limsup_{N\rightarrow\infty} F_N(\xi,\bmlambda_N)
    \leq 
    \frac{1}{2}\xi(\bmlambda) 
    - 
    \Hopf(\xi,\bmlambda).
    \tag{HJ1}
    \label{eq:HJ1}
    \end{equation}
    The upper bound \eqref{eq:HJ1} is a specialized version of Chen--Xia's reformulation (Theorem~4.14 in \cite{ChenXiaHJ2025}) of Mourrat's upper bound
(Theorem 3.4 in \cite{Mourratvector}), combined
with their Hopf representation (Theorem~4.7(3) in \cite{ChenXiaHJ2025}). 
These results are applicable because 
we can identify the multispecies Ising model as a vector spin model 
by sending a spin $\sigma_d$ in species \(d\) to the vector \(\sigma_d e_d\in\R^D\).
Then, the vector overlap matrix is diagonal, with diagonal entries \(R_{N,d}\),
and the covariance is obtained from
\[
\hat \xi(A)=\xi(A_{11},\ldots,A_{DD}),
\]
where $A$ is a $D$-by-$D$ symmetric matrix. 
Thus, on diagonal paths, \(\hat\xi\) is just the original multispecies covariance
function.

Therefore, we can apply Theorem~4.14 in \cite{ChenXiaHJ2025} to bound the limit superior of the free energy with
the Lipschitz viscosity solution of the Hamilton--Jacobi equation stated in that paper.
Then, we apply Theorem~4.7(3) in \cite{ChenXiaHJ2025} to identify
that Lipschitz viscosity solution with their Hopf formula with $t=1/2$, $\mu=\mathsf{0}$.
Here, we replace their $\psi$ with the specialization for multispecies Ising models
\begin{equation}
    \sum_{d=1}^D \lambda_d \psi(\sfq_d),
    \label{eq:multispecies-initial-cond}
\end{equation}
so the convexity of \eqref{eq:multispecies-initial-cond} is provided by Theorem~\ref{thm:chen-initial-convexity}.
    \item Given a one-species model with the covariance function $\xi$, 
    we have
    \begin{equation}
    \lim_{N\rightarrow\infty} F_N(\xi)
    =
    \frac{1}{2}\xi(1) 
    - 
    \Hopf(\xi).
    \tag{HJ2}
    \label{eq:HJ2}
    \end{equation}
    For (HJ2), Theorem~4.7(3) in \cite{ChenXiaHJ2025} identifies the one-species
    Hopf formula with the unique Lipschitz viscosity solution, while
    Theorem~1.1 in \cite{ChenMourrat2025VectorNonconvex} identifies that solution with
    the limiting free energy. Their covariance
    class allows power series beginning at \(k=1\); see (1.24) in \cite{ChenMourrat2025VectorNonconvex}.
\end{enumerate}

\subsection*{Proof of \eqref{eq:balanced-superior}}

We can restrict the supremum in \eqref{eq:HJ1} to
the paths $\hat\sfp = (\lambda_1\sfp,\ldots,\lambda_D\sfp)$ for all $\sfp\in \PathSpace_\infty$.
Moreover, the homogeneity of each $\xi_k$ yields
\begin{equation}
    \xi(\hat\sfp)
    =
    \xi(\lambda_1\sfp,\ldots,\lambda_D\sfp)
    =
    \sum_{k\geq 1}
    \biggl(
    \sum_{\substack{\bm p\in\mathsf P_k}}
    \Delta_{\bmp}^2\prod_{d=1}^D \lambda_d^{p_d}
    \biggr) \sfp^k
    =
    \sum_{k\geq 1}\xi_k(\bmlambda)\sfp^k
    =
    \xionesp(\sfp),
\end{equation}
so we obtain the lower bound
\begin{equation}
\Hopf(\xi,\bmlambda)
\geq
    \sup_{\sfp\in \PathSpace_\infty}
    \inf_{\sfq\in \PathSpace_\infty^D}
    \left\{
    \sum_{d=1}^D \lambda_d \psi(\sfq_d)
    -
    \int_0^1 \sfp(s)
    \sum_{d=1}^D \lambda_d\sfq_d(s)\dd{s}
    +
    \frac{1}{2}
    \int_0^1 \xionesp(\sfp(s)) \dd{s}
    \right\}.
    \label{eq:Hopf.2}
\end{equation}
Now, note that we assume $\lambda_d>0$ for every $d\in\{1,\ldots,D\}$
and $\sum_{d=1}^D \lambda_d = 1$, which defines a probability measure on
$\{1,\ldots,D\}$.
Applying Jensen's inequality with this measure to $\psi$ yields
\begin{equation}
    \psi\Bigl(\sum_{d=1}^D \lambda_d \sfq_d\Bigr)
    \leq
    \sum_{d=1}^D \lambda_d \psi(\sfq_d).
    \label{eq:Jensen-psi}
\end{equation}
Combining \eqref{eq:Jensen-psi} with the inclusion
\begin{equation*}
    \overline{\PathSpace}_{D,\infty}
    \coloneqq
    \Bigl\{
        \bar\sfq
    \,\Big|\,
        \bar\sfq 
    = 
    \sum_{d=1}^D \lambda_d \sfq_d,
    \, \sfq\in \PathSpace_\infty^D
    \Bigr\}
    \subseteq 
    \PathSpace_\infty,
\end{equation*}
we obtain 
\begin{align}
    \inf_{\sfq\in \PathSpace_\infty^D}
    \left\{
    \sum_{d=1}^D \lambda_d \psi(\sfq_d)
    -
    \int_0^1 \sfp(s)
    \sum_{d=1}^D \lambda_d\sfq_d(s)\dd{s}
    \right\}
    &\geq 
    \inf_{\bar\sfq\in \overline{\PathSpace}_{D,\infty}}
    \left\{
    \psi(\bar\sfq)-\int_0^1 \sfp(s)\bar\sfq(s)\dd{s}
    \right\}
    \nonumber \\
    &\geq 
    \inf_{\sfq\in \PathSpace_\infty}
    \left\{
    \psi(\sfq)-\int_0^1 \sfp(s)\sfq(s)\dd{s}
    \right\}.
    \label{eq:Hopf-inf}
\end{align}
Combining \eqref{eq:HJ1}, \eqref{eq:Hopf.2} and \eqref{eq:Hopf-inf}
and noting that $\xi(\bmlambda)=\xionesp(1)$,
we obtain
\begin{equation*}
    \limsup_{N\rightarrow\infty} F_N(\xi,\bmlambda_N)
    \leq 
    \frac{1}{2}\xionesp(1) 
    - 
    \Hopf(\xionesp)
    =
    \lim_{N\rightarrow\infty} F_N(\xionesp),
\end{equation*}
where the equality above is provided by \eqref{eq:HJ2}.

\begin{remark}
    Note that the proof of \eqref{eq:balanced-superior} above does not use the balanced condition.
\end{remark}

\section{Time-changed Parisi PDE}
\label{sec:time-changed-parisi-pde}

As mentioned in the introduction, we want to perform a change of variables on a dense class with sufficient regularity
to simplify the analysis.
From now on, fix \(\alpha\in(0,1)\).
Introduce the following two classes of paths.
\begin{align}
\SmoothCore_\alpha
&=
\Bigl\{
\sfq\in C^{1+\alpha/2}([0,1])
\,\Big|\,
\sfq(0)=0,\ \inf_{[0,1]}\dot\sfq>0
\Bigr\}, \label{eq:def-smooth-core}
\\
\Opendenseset_\alpha
&=
\Bigl\{\gamma\in \Hoelder{\alpha/2}([0,1])
\,\Big|\,
\inf_{[0,1]}\gamma>0\Bigr\}.
\label{eq:def-open-dense-set}
\end{align}
Note that $\sfq\mapsto \dot\sfq$ maps \(\SmoothCore_\alpha\) to \(\Opendenseset_\alpha\).
Introduce the change of variables
\begin{equation}
    \label{eq:gamma-change-of-variables}
\Psi^{\dot\sfq}(s,x)=\Phi^{\sfq}(\sfq(s),x), 
\qquad (s,x)\in[0,1]\times\R.
\end{equation}
Given \(\gamma\in\Opendenseset_\alpha\), $\Psi^\gamma$ solves a time-changed Parisi PDE.
\begin{equation}\label{eq:gamma-pde}
\begin{cases}
-\partial_s\Psi^\gamma(s,x)
=
\dfrac12\gamma(s)\bigl(\partial_{xx}\Psi^\gamma(s,x)
+
s\,
(\partial_x\Psi^\gamma(s,x))^2\bigr),
& (s,x)\in [0,1)\times\R,
\\[0.5ex]
\Psi^\gamma(1,x)=\log\cosh x, & x\in\R.
\end{cases}
\end{equation}
Interestingly, \eqref{eq:gamma-pde}
already appeared in (16) of Parisi's original paper \cite{Parisi1980Sequence}.

Since the map $\sfq\mapsto \dot\sfq$ is linear,
to prove the concavity of $\sfq\mapsto\Phi^\sfq(0,0)$, it suffices to prove the concavity of the map $\gamma\mapsto\Psi^\gamma(0,0)$ defined 
on $\Opendenseset_\alpha$.
To achieve this, 
a natural strategy is to compute its second Fr\'echet derivative.
The standard theory of parabolic PDEs in H\"older spaces provides the needed regularity,
and we refer to Krylov \cite{Krylov1996} for a pedagogical treatment of this theory.

Based on the previous discussions, the following theorem implies
Theorem~\ref{thm:chen-initial-convexity}.
\begin{theorem}
\label{thm:concavity}
Denote by \(\ParaHoelder{\alpha}([0,1]\times\R)\) the parabolic H\"older space.
The following hold.
\begin{enumerate}
  \item \label{item:C2}
  The map \(\bm{\Psi}:\Opendenseset_\alpha \rightarrow \ParaHoelder{\alpha}([0,1]\times\mathbb R)\) defined by
  \[
  \bm{\Psi}(\gamma)(s,x)=\Psi^\gamma(s,x)-\log\cosh x,
  \qquad (s,x)\in [0,1]\times \R,
  \]
  is \(C^2\) in the Fr\'echet sense.
  \item \label{item:concavity}
  Fix $\gamma\in \Opendenseset_\alpha$ and $h\in \Hoelder{\alpha/2}([0,1])$. 
  Then, for all $s\in [0,1)$,
  \[
  D^2\Psi^\gamma[h,h](s,0)\leq 0.
  \]
  \item \label{item:off-diagonal}
  Fix $\gamma\in\Opendenseset_\alpha$ and fix $h_1,h_2\in \Hoelder{\alpha/2}([0,1])$ where $h_1\geq 0$ and $h_2\geq 0$.
  Then, for all $s\in [0,1)$,
  \[
  D^2\Psi^\gamma[h_1,h_2](s,0)\leq 0.
  \]
\end{enumerate}
\end{theorem}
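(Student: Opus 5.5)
The plan is to treat the three items in order: first the regularity in \eqref{item:C2}, then the sign statements in \eqref{item:concavity} and \eqref{item:off-diagonal} via explicit linearized equations, Feynman--Kac, and a cone-preservation argument.

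\textbf{Regularity.} For \eqref{item:C2} I would write \(\Psi^\gamma=\log\cosh x+\bm\Psi(\gamma)\). The remainder \(v=\bm\Psi(\gamma)\) then solves a uniformly parabolic semilinear equation with zero terminal data and a source \(\tfrac12\gamma(\partial_{xx}\log\cosh+s(\tanh)^2)\), which lies in \(\ParaHoelderzero{\alpha}\). Define
\[
F(\gamma,v)=\partial_s v+\tfrac12\gamma\bigl(\partial_{xx}(v+\log\cosh)+s(\partial_x(v+\log\cosh))^2\bigr)
\]
as a map \(\Opendenseset_\alpha\times\ParaHoelder{\alpha}_0\to\ParaHoelderzero{\alpha}\), where the subscript \(0\) means vanishing at \(s=1\). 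This map is polynomial, hence \(C^\infty\), in \((\gamma,v)\). Its \(v\)-derivative is the linear operator \(\partial_s+\tfrac\gamma2(\partial_{xx}+2s\Psi_x\partial_x)\), which has bounded Hölder coefficients since \(|\Psi_x|\le1\) by the maximum principle applied to \(\Psi_x\). By Krylov's Schauder theory this operator is an isomorphism, and the implicit function theorem then gives that \(\bm\Psi\) is \(C^2\) (indeed \(C^\infty\)). Global existence of \(v\) itself follows from the Hopf--Cole/a priori bounds \(|\Psi_x|\le 1\) and \(0\le\Psi_{xx}\le1\).

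\textbf{Linearized equations.} Write \(\mathcal L=\tfrac\gamma2(\partial_{xx}+2s\Psi_x\partial_x)\). Then \(u_i=D\Psi^\gamma[h_i]\) and \(w=D^2\Psi^\gamma[h_1,h_2]\) solve, with zero terminal data,
\[
-\partial_s u_i=\mathcal L u_i+\tfrac{h_i}{2}\bigl(\Psi_{xx}+s\Psi_x^2\bigr),
\]
\[
-\partial_s w=\mathcal L w+\gamma s\,\partial_xu_1\partial_xu_2+\tfrac{h_1}{2}\bigl(\partial_{xx}u_2+2s\Psi_x\partial_xu_2\bigr)+\tfrac{h_2}{2}\bigl(\partial_{xx}u_1+2s\Psi_x\partial_xu_1\bigr).
\]
Feynman--Kac along the diffusion \(dX=s\gamma\Psi_x\,dt+\sqrt\gamma\,dB\) expresses \(w(s,0)\) as the expected time integral of the source. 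The symmetry \(x\mapsto-x\) makes \(\Psi\) even, so \(u_i\) is even and \(\partial_xu_i\) is odd.

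\textbf{Sign of the source.} Item \eqref{item:concavity} does not follow from a pointwise sign. The term \(\gamma s(\partial_xu)^2\ge0\) has the wrong sign and must be dominated by the \(h\)-terms. I would therefore look for a cone \(\cone\) of functions \(f(s,\cdot)\), defined by an inequality of the form
\[
\partial_{xx}f+2s\Psi_x\partial_xf\le -c(s,x)(\partial_xf)^2
\]
together with evenness, such that \(u=D\Psi^\gamma[h]\) stays in \(\cone\) backward in time. The natural choice of \(c\) is one that makes the full source \(\gamma s(\partial_x u)^2+h(\partial_{xx}u+2s\Psi_x\partial_xu)\) nonpositive after integration along the diffusion. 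If that fails, I would instead use a sum-of-squares identity obtained by differentiating the \(u\)-equation in \(x\) and using the equation for \(\Psi_{xx}\).

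Cone preservation itself would be proved by the maximum principle. I would write the PDE satisfied by the defect quantity \(G=\partial_{xx}u+2s\Psi_x\partial_xu+c(\partial_xu)^2\), show it has the form \(-\partial_sG\le\widetilde{\mathcal L}G+(\text{zeroth order})G\) on the set where \(G\ge0\), and check the terminal condition at \(s=1\). This requires the explicit convexity facts \(0\le\Psi_{xx}\le1-\Psi_x^2\), inherited from \(\log\cosh\). For \eqref{item:concavity}, polarization alone does not give the mixed case, so for \eqref{item:off-diagonal} I would use the same cone for each \(u_i\) together with \(h_i\ge0\). The maximum principle gives \(u_i\ge0\) since \(\Psi_{xx}+s\Psi_x^2\ge0\), and I would then show the mixed source is \(\le0\) by a Cauchy--Schwarz-type bound \(|\partial_xu_1\partial_xu_2|\) controlled by the two diagonal cone inequalities.

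\textbf{Main obstacle.} I expect the main obstacle to be identifying the right cone \(\cone\) and verifying its preservation. This is where the nonlinearity \(s\Psi_x^2\) interacts with the time-dependent \(h\), and where \(s\) must enter the cone in a way compatible with the \(\partial_s\) term.
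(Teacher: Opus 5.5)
Your item (1) matches the paper's argument: a residual map on functions vanishing at $s=1$, the Schauder isomorphism for the linearization, and the implicit function theorem. The gap is in items (2) and (3).

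You propose a cone in $x$, pointwise, for $u=D\Psi^\gamma[h]$ that makes the source of the $W$-equation nonpositive, and then conclude by the maximum principle or Feynman--Kac. This cannot work, and not only for your particular cone.

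First, at $s=0$ your condition needs $c\ge0$ to absorb $\gamma s(\partial_xu)^2$ when $h>0$. It then reads $\partial_{xx}f\le -c(\partial_xf)^2\le 0$, i.e.\ concavity. A bounded concave function on $\R$ is constant, whereas $u(0,\cdot)$ is bounded and in general not constant.

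More fundamentally, a pointwise nonpositive source would give $D^2\Psi^\gamma[h,h](s,x)\le0$ for \emph{every} $x$, and that is false. Let $P^\gamma_{\rho,\sigma}$ be the transition operator of your diffusion and $w^\gamma_{\rho,\sigma}=P^\gamma_{\rho,\sigma}[(\partial_x\Psi^\gamma(\sigma,\cdot))^2]$. Take $h\ge0$ a smoothing of $\delta^{-1}\indic_{[0,\delta]}$. The representation below gives
\[
D^2\Psi^\gamma[h,h](0,x)\longrightarrow-\frac14\,\partial_{xx}\int_0^1 w^\gamma_{0,\sigma}(x)\,d\sigma\qquad(\delta\to0).
\]
The function $\int_0^1 w^\gamma_{0,\sigma}\,d\sigma$ is bounded, even, nondecreasing on $[0,\infty)$ and nonconstant (it is $<1$ at $0$ and tends to $1$ at infinity). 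Hence it is not convex, and the limit is positive at some $x$. Because this $h$ is nonnegative, the same example defeats your Cauchy--Schwarz plan for item (3). The theorem is genuinely a statement at $x=0$, and it comes from cancellations in the time integrals, not from a pointwise sign. The sum of squares that works is in the time variables, not in $x$.

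Here is the missing idea. Apply Feynman--Kac to both variation equations, and use $(\partial_s+\mathcal L)v^\gamma=(\partial_x\Psi^\gamma)^2$ for $v^\gamma=\partial_{xx}\Psi^\gamma+s(\partial_x\Psi^\gamma)^2$, so that $P^\gamma_{\rho,r}[v^\gamma(r,\cdot)]=1-\int_r^1 w^\gamma_{\rho,\sigma}\,d\sigma$. The paper then obtains
\[
D^2\Psi^\gamma[h_1,h_2](s,x)=-\int_s^1\iint_{[s,\sigma]^2}h_1(\rho_1)h_2(\rho_2)\,A^{\gamma,s,\sigma,x}(\rho_1\wedge\rho_2)\,d\rho_1\,d\rho_2\,d\sigma .
\]
Write $A=A^{\gamma,s,\sigma,x}$ and use $A(\rho_1\wedge\rho_2)=A(s)+\int_s^\sigma\indic_{\{\rho\le\rho_1\}}\indic_{\{\rho\le\rho_2\}}\,\partial_\rho A(\rho)\,d\rho$. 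The diagonal case becomes $-\int_s^1\bigl[A(s)(\int_s^\sigma h)^2+\int_s^\sigma(\int_\rho^\sigma h)^2\,\partial_\rho A\,d\rho\bigr]d\sigma$, which needs no sign on $h$. It therefore suffices to show two things. First, $A^{\gamma,s,\sigma,0}(s)=\frac14\partial_{xx}w^\gamma_{s,\sigma}(0)\ge0$. Second, $\partial_\rho A\ge0$, where
\[
\partial_\rho A(\rho)=\frac12P^\gamma_{s,\rho}\bigl[\partial_x\Psi^\gamma(\rho,\cdot)\,\partial_xw^\gamma_{\rho,\sigma}\bigr](x)+\frac14\int_s^\rho P^\gamma_{s,\tau}\bigl[\gamma(\tau)\tau\,\partial_xw^\gamma_{\tau,\rho}\,\partial_xw^\gamma_{\tau,\sigma}\bigr](x)\,d\tau .
\]
Both follow from a much simpler cone than yours, applied to $w^\gamma$ rather than to $u$. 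The operator $P^\gamma_{\rho,\sigma}$ preserves even, nonnegative functions that are nondecreasing on $[0,\infty)$; this is the maximum principle for $\partial_xP^\gamma_{\rho,\sigma}f$ on the half-line with zero Dirichlet data at $x=0$. Consequently $0$ is a minimum of $w^\gamma_{s,\sigma}$, and all the $\partial_x$-factors above are odd and nonnegative on $[0,\infty)$. Item (3) follows from the same formula, since $\iint_{[\rho,\sigma]^2}h_1h_2\ge0$ when $h_1,h_2\ge0$.
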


\begin{remark}
Since the terminal condition of the time-changed Parisi PDE \eqref{eq:gamma-pde} is independent of $\gamma$, Theorem~\ref{thm:concavity}(1)
justifies the existence of the second Fr\'echet derivative of $\Psi^\gamma$.
Theorem~\ref{thm:concavity}(2) implies the concavity of $\gamma\mapsto\Psi^\gamma(0,0)$.
Theorem~\ref{thm:concavity}(3) provides off-diagonal information about the second derivative.
While it is not needed for the convexity of $\psi$, we record it for independent interest.
\end{remark}

\subsection*{Auffinger and Chen's raywise concavity}

Theorem~1 in \cite{AuffingerChen16} gives concavity only along the one-dimensional
ray generated by the fixed profile \(\gamma_0\).  
To explain this, we briefly recall their result.
Let $\xi(x) = \sum_{k=2}^\infty \Delta_k^2 x^k$ be the covariance function
of a one-species mixed model.
Let $\sfq$ be a quantile function on $[0,1]$ and $\sfq^{-1}$ be 
its generalized right inverse, or the corresponding CDF. 
Let $\Phi_{\beta}$ be the solution of the Parisi PDE 
\begin{equation}
  \label{eq:Parisi-pde}
\begin{cases}
-\partial_t\Phi_{\beta}(t,x)
=
\dfrac{\beta^2\xi''(t)}{2}\left(
\partial_{xx}\Phi_{\beta}(t,x)
+
\sfq^{-1}(t)(\partial_x\Phi_{\beta}(t,x))^2
\right),
& t\in [0,1),
\\[2ex]
\Phi_{\beta}(1,x)=\log\cosh x,
& x\in\R.
\end{cases}
\end{equation}
Theorem 1 in \cite{AuffingerChen16} proved that for fixed $\xi$ and $\sfq$, 
the map $\beta^2\mapsto\Phi_{\beta}(0,0)$ is concave.
Assuming for this discussion that \(\sfq\) is smooth and strictly increasing,
then $\Phi_\beta(\sfq(s),x)$ solves \eqref{eq:gamma-pde}
with $\gamma=\beta^2\gamma_0$ and $\gamma_0(s)=\dot{\sfq}(s)\xi''(\sfq(s))$.
In particular, 
\[
\Phi_\beta(0,0) = \Psi^{\beta^2\gamma_0}(0,0).
\]
Therefore, their result may be viewed
as the raywise concavity of
\[
    c\longmapsto \Psi^{c\gamma_0}(0,0).
\]




\subsection{Proof heuristics and outline of Theorem~\ref{thm:concavity}}
\label{sec:heuristics}

The proof is guided by the preceding interpretation of Auffinger--Chen's argument.
Rather than working in discretized time, we work directly with the
time-changed PDE \eqref{eq:gamma-pde} and its transition operators. First, 
Schauder's estimates in H\"older spaces and the Banach implicit function theorem 
give
the \(C^2\)-regularity of the solution map
\[
    \gamma\longmapsto \Psi^\gamma-\log\cosh ,
\]
which proves Theorem~\ref{thm:concavity}\textup{(1)}. Differentiating \eqref{eq:gamma-pde} with
respect to \(\gamma\) then gives linearized equations for the first and second
Fréchet derivatives.

The main step is to apply the Feynman--Kac formula to these variation equations and
rewrite the second derivative in a min-kernel form. 
This representation reduces the sign
of \(D^2\Psi^\gamma\) to the spatial symmetries preserved by the transition operator.
We leave the details to Section~\ref{sec:cone-preservation}.

\subsection*{Organization of the proof of Theorem~\ref{thm:concavity}}
In Section~\ref{sec:pde-notation} and Section~\ref{sec:solution-map-regularity}, we introduce the required machinery from the standard parabolic PDE theory and known properties of the solution
of the Parisi PDE. These allow us to prove Theorem~\ref{thm:concavity}(1) in
Section~\ref{subsec:section3-implicit-function}. Then, the PDEs for the 
Fr\'echet derivatives are provided in Section~\ref{subsec:variation-equations}. 
Using these PDEs and the Feynman--Kac formula, we derive the min-kernel representation in 
Section~\ref{sec:propagation}. Then, the details of the cone preservation argument
are in Section~\ref{sec:cone-preservation}, and this leads to the proof 
 of Theorem~\ref{thm:concavity}(2) and Theorem~\ref{thm:concavity}(3)
 in Section~\ref{sec:concavity-proof}.

\section{PDE preliminaries}
\label{sec:pde-notation}

This section collects the conventions used
in the rest of the paper. The notation in
Section~\ref{sec:hopf-formula-application} is independent of this section.

\subsection{Notation and conventions}
\label{sec:notation-conventions}
Unless otherwise stated, functions defined on only one factor of
\([0,1]\times\mathbb R\) are identified with their canonical lifts:
if \(a:[0,1]\to\mathbb R\), then \(a(s,x)=a(s)\), and if
\(f:\mathbb R\to\mathbb R\), then \(f(s,x)=f(x)\). We write
\(\|\cdot\|_\infty\) for the relevant supremum norm.

For
\(t\in[0,1]\), \(C([0,t];C_b^2(\mathbb R))\) denotes the space of
continuous maps 
\[r\mapsto u(r,\cdot)\in C_b^2(\mathbb R),\] 
equipped
with the supremum norm. We write \(C_b^{1,2}([0,t]\times\mathbb R)\)
for the space of functions that are \(C^1\) in time and \(C^2\) in
space, with all derivatives up to these orders bounded and continuous.
Classical solutions are understood in this sense.

For \(\gamma\in \Opendenseset_\alpha\), let \(\Psi^\gamma\) be the solution of the
Parisi PDE, and set
\[
    g(x)=\log\cosh x,\qquad
    \bm\Psi(\gamma)=\Psi^\gamma-g.
\]
For fixed \(\gamma\in \Opendenseset_\alpha\), define
\begin{align}
    L_s^\gamma u(s,x)
    &=
    \frac12\partial_{xx}u(s,x)
    +
    s\,\partial_x\Psi^\gamma(s,\cdot)\,\partial_x u(s,x),
    \label{eq:def-Lgamma}
    \\
    \paraop_s u(s,x)
    &=
    \partial_s u(s,x)
    +
    \gamma(s)L_s^\gamma u(s,x),
    \label{eq:def-Dgamma}
\end{align}
for all $(s,x)\in[0,1]\times\mathbb R$.

Finally, set
\[
    v^\gamma
    =
    \partial_{xx}\Psi^\gamma+s(\partial_x\Psi^\gamma)^2.
\]

\subsection{Parabolic PDEs in H\"older spaces}
\label{sec:Hoelder}

This section recalls the parabolic H\"older theory used throughout the paper, which includes standard facts of parabolic H\"older norms, 
the maximum principle, and Schauder estimates. Most of the contents are based on Krylov's lecture note \cite{Krylov1996}.

For a time interval \(I\subset[0,1]\), write
\[
    Q_I:=I\times\R .
\]
Below, when
\(I=[0,1]\) or \(Q_I=[0,1]\times\R\), we omit \(I\) and \(Q_I\) from the notation.

\subsubsection{Parabolic H\"older norms}
\label{sec:holder-estimates}

We write \(C_b^{\alpha/2}(I)\) for the Banach space of bounded
continuous functions \(\gamma:I\to\R\) such that
\[
  [\gamma]_{\alpha/2;I}
  :=
  \sup_{\substack{s,t\in I\\s\neq t}}
  \frac{|\gamma(t)-\gamma(s)|}{|t-s|^{\alpha/2}}
  <\infty ,
  \qquad
  \norm{\gamma}_{\alpha/2;I}
  :=
  \norm{\gamma}_{\infty;I}
  +
  [\gamma]_{\alpha/2;I}.
\]
On compact intervals this agrees with the usual H\"older space
\(C^{\alpha/2}(I)\).

For \(m\in\mathbb N_0\), we write \(C_b^m(\R)\) for the space of functions
whose derivatives \(\partial_x^j f\), \(0\le j\le m\), are bounded and
continuous.  We write \(C_b^{m+\alpha}(\R)\) for the subspace of
\(C_b^m(\R)\) such that \(\partial_x^m f\) is \(\alpha\)-H\"older, with norm
\[
  \norm{u}_{m+\alpha;\R}
  :=
  \sum_{j=0}^{m}
  \norm*{\partial_x^j u}_{\infty;\R}
  +
  [\partial_x^m u]_{\alpha;\R},
\]
where
\[
  [w]_{\alpha;\R}
  :=
  \sup_{\substack{x,y\in\R\\x\neq y}}
  \frac{|w(x)-w(y)|}{|x-y|^\alpha}.
\]

For \(u:Q_I\to\R\), define
\[
  [u]_{\alpha/2,t;Q_I}
  :=
  \sup_{x\in\R}
  \sup_{\substack{s,t\in I\\s\neq t}}
  \frac{|u(t,x)-u(s,x)|}{|t-s|^{\alpha/2}},
  \qquad
  [u]_{\alpha,x;Q_I}
  :=
  \sup_{t\in I}
  \sup_{\substack{x,y\in\R\\x\neq y}}
  \frac{|u(t,x)-u(t,y)|}{|x-y|^\alpha}.
\]
Set
\[
  [u]_{\alpha/2,\alpha;Q_I}
  :=
  [u]_{\alpha/2,t;Q_I}
  +
  [u]_{\alpha,x;Q_I},
  \qquad
  \norm{u}_{\alpha/2,\alpha;Q_I}
  :=
  \norm{u}_{\infty;Q_I}
  +
  [u]_{\alpha/2,\alpha;Q_I}.
\]
We write
\[
  C_b^{\alpha/2,\alpha}(Q_I)
  :=
  \bigl\{
      u\in C_b(Q_I):
      \norm{u}_{\alpha/2,\alpha;Q_I}<\infty
  \bigr\}.
\]

For \(m\in\mathbb N_0\), we write
\(C_b^{\alpha/2,m+\alpha}(Q_I)\) for the space of functions
\(u:Q_I\to\R\) such that, for every \(0\le j\le m\), the spatial derivative
\(\partial_x^j u\) exists and belongs to \(C_b^{\alpha/2,\alpha}(Q_I)\).
The norm is
\[
  \norm{u}_{\alpha/2,m+\alpha;Q_I}
  :=
  \sum_{j=0}^{m}
  \norm*{\partial_x^j u}_{\alpha/2,\alpha;Q_I}.
\]

Finally, we write \(C_b^{1+\alpha/2,2+\alpha}(Q_I)\) for the standard
second-order parabolic H\"older space of functions \(u\) whose derivatives
\(\partial_t^i\partial_x^j u\), \(0\le 2i+j\le2\), are bounded and continuous
on \(Q_I\), and whose top parabolic derivatives \(\partial_tu\) and
\(\partial_{xx}u\) belong to \(C_b^{\alpha/2,\alpha}(Q_I)\).  We equip it
with the norm
\[
  \norm{u}_{1+\alpha/2,2+\alpha;Q_I}
  :=
  \sum_{0\le 2i+j\le2}
  \norm*{\partial_t^i\partial_x^j u}_{\infty;Q_I}
  +
  [\partial_t u]_{\alpha/2,\alpha;Q_I}
  +
  [\partial_{xx}u]_{\alpha/2,\alpha;Q_I}.
\]

All spaces above are Banach spaces with their indicated norms.  

We will use without further comment the following standard facts.
\begin{enumerate}
  \item
  For all
  \(v,w\in\ParaHoelderzero{\alpha}([0,1]\times\R)\),
  \[
      \norm{vw}_{\alpha/2,\alpha}
      \le
      \norm{v}_{\alpha/2,\alpha}
      \norm{w}_{\alpha/2,\alpha}.
  \]

  \item
  There exists a constant \(C=C_\alpha>0\) such that
  for all \(v,w\in\ParaHoelder{\alpha}([0,1]\times\R)\),
  \[
      \norm{vw}_{1+\alpha/2,2+\alpha}
      \le
      C
      \norm{v}_{1+\alpha/2,2+\alpha}
      \norm{w}_{1+\alpha/2,2+\alpha}.
  \]

  \item
  There exists \(C=C_\alpha>0\) such that
  for all \(u\in\ParaHoelder{\alpha}([0,1]\times\R)\),
  \[
      \norm{u}_{\alpha/2,\alpha}
      \le
      C\norm{u}_{1+\alpha/2,2+\alpha}.
  \]
\end{enumerate}

We also record a lemma stating that the relevant derivatives define bounded operators between the corresponding parabolic H\"older spaces.

\begin{lemma}
\label{lem:X-to-Y-derivatives}
The maps
\[
  u\mapsto \partial_su,
  \qquad
  u\mapsto \partial_xu,
  \qquad
  u\mapsto \partial_{xx}u
\]
are bounded linear maps from $\ParaHoelder{\alpha}([0,1]\times\R)$ to $\ParaHoelderzero{\alpha}([0,1]\times\R)$.
There exists \(C=C_\alpha>0\) such that
for all $u\in\ParaHoelder{\alpha}([0,1]\times\R)$,
\[
  \|\partial_su\|_{\alpha/2,\alpha}
  +
  \|\partial_xu\|_{\alpha/2,\alpha}
  +
  \|\partial_{xx}u\|_{\alpha/2,\alpha}
  \le
  C\|u\|_{1+\alpha/2,2+\alpha}.
\]
\end{lemma}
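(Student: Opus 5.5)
The boundedness of the three maps follows directly from the definitions of the norms. The whole point is to show that each derivative lands in $\ParaHoelderzero{\alpha}$ with control, i.e.\ that both its supremum and its mixed Hölder seminorm $[\cdot]_{\alpha/2,\alpha}$ are bounded by $\|u\|_{1+\alpha/2,2+\alpha}$. Linearity is immediate, so I only need the estimate. I will treat the three derivatives separately.

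\emph{The derivatives $\partial_s u$ and $\partial_{xx}u$.} By definition of $\ParaHoelder{\alpha}$, these are bounded and continuous, with supremum norms appearing in the sum $\sum_{0\le 2i+j\le 2}\|\partial_t^i\partial_x^j u\|_\infty$. Their seminorms $[\partial_s u]_{\alpha/2,\alpha}$ and $[\partial_{xx}u]_{\alpha/2,\alpha}$ appear explicitly in $\|u\|_{1+\alpha/2,2+\alpha}$. So for these two the estimate holds with constant $1$.

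\emph{The derivative $\partial_x u$.} Its supremum norm is again part of the definition. For the spatial seminorm, I use the mean value theorem in $x$ together with the trivial bound for $|x-y|\ge 1$:
\[
|\partial_xu(t,x)-\partial_xu(t,y)|\le \min\bigl(\|\partial_{xx}u\|_\infty|x-y|,\,2\|\partial_xu\|_\infty\bigr)\le 2\bigl(\|\partial_{xx}u\|_\infty+\|\partial_xu\|_\infty\bigr)|x-y|^\alpha .
\]

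\emph{The main obstacle: the time seminorm $[\partial_xu]_{\alpha/2,t}$.} This is the one genuinely nontrivial step, since no mixed derivative $\partial_t\partial_x u$ is controlled. I will use the standard interpolation trick with a spatial step $h=|t-s|^{1/2}$, assuming $0<|t-s|\le 1$ (the time interval is $[0,1]$). Write
\[
\partial_xu(r,x)=\frac{u(r,x+h)-u(r,x)}{h}-\frac1h\int_0^h\bigl(\partial_xu(r,x+\theta)-\partial_xu(r,x)\bigr)\dd\theta .
\]
The integral term is bounded by $\tfrac h2\|\partial_{xx}u\|_\infty$ for each of $r=t$ and $r=s$.

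For the difference-quotient term, I bound the change in time of $u$ itself:
\[
\frac{|u(t,x+h)-u(s,x+h)|+|u(t,x)-u(s,x)|}{h}\le \frac{2\|\partial_su\|_\infty|t-s|}{h}.
\]
With $h=|t-s|^{1/2}$, the total is at most
\[
\bigl(2\|\partial_su\|_\infty+\|\partial_{xx}u\|_\infty\bigr)|t-s|^{1/2}\le \bigl(2\|\partial_su\|_\infty+\|\partial_{xx}u\|_\infty\bigr)|t-s|^{\alpha/2},
\]
where the last inequality uses $|t-s|\le 1$ and $\alpha<1$.

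Summing the three contributions gives the stated inequality with a constant $C_\alpha$; in fact $C=5$ suffices.
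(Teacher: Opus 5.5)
Your proof is correct. For $\partial_s u$ and $\partial_{xx}u$ it is exactly the paper's argument: the estimate is read off from the definition of the norm with constant $1$.

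The difference is in $\partial_x u$. The paper handles it in one line by citing the parabolic interpolation inequality (Theorem~8.8.1 in Krylov's lecture notes). You instead prove the bound by hand:
\begin{itemize}
\item for the spatial seminorm, the mean value theorem together with the trivial bound $2\|\partial_x u\|_\infty$;
\item for the time seminorm, the difference-quotient identity at the parabolic scale $h=|t-s|^{1/2}$.
\end{itemize}
The identity is right, and the remainder bound $\tfrac h2\|\partial_{xx}u\|_\infty$ holds. The bookkeeping gives $\|\partial_x u\|_{\alpha/2,\alpha}\le 3\|\partial_xu\|_\infty+2\|\partial_su\|_\infty+3\|\partial_{xx}u\|_\infty$, so $C=5$ indeed works.

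Your route is self-contained and gives an explicit constant independent of $\alpha$. It also shows slightly more: $\partial_x u$ is Lipschitz in $x$ and $\tfrac12$-H\"older in $t$. This uses only the sup norms of $\partial_s u$, $\partial_x u$, $\partial_{xx}u$, and no H\"older seminorm of the top-order derivatives enters. The only structural inputs are $|t-s|\le 1$ on the compact time interval and $\alpha\le 1$.

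The paper's citation is shorter. Interpolation inequalities of this type are also sharper in a direction the lemma does not need: they typically bound intermediate quantities by an arbitrarily small multiple of the top-order seminorms plus a multiple of $\|u\|_\infty$ alone. Since the paper's norm already contains all the lower-order sup norms, your elementary argument suffices.
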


\begin{proof}
Linearity is immediate. 
The estimates for \(\partial_su\) and \(\partial_{xx}u\) follow directly from
the definition of the $\ParaHoelder{\alpha}([0,1]\times\R)$-norm:
\[
  \|\partial_su\|_{\alpha/2,\alpha}
  =
  \|\partial_su\|_\infty
  +
  [\partial_su]_{\alpha/2,\alpha}
  \le
  \|u\|_{1+\alpha/2,2+\alpha},
\]
and
\[
  \|\partial_{xx}u\|_{\alpha/2,\alpha}
  =
  \|\partial_{xx}u\|_\infty
  +
  [\partial_{xx}u]_{\alpha/2,\alpha}
  \le
  \|u\|_{1+\alpha/2,2+\alpha}.
\]

It remains to derive the estimate for \(\partial_xu\). Applying the parabolic interpolation
inequality (cf. Theorem~8.8.1~in~\cite{Krylov1996})
yields that there exists a constant $C_\alpha>0$ such that
\[
  \|\partial_xu\|_{\alpha/2,\alpha}
  \le
  C_\alpha\|u\|_{1+\alpha/2,2+\alpha}.
\]
Combining the three estimates gives the claim.
\end{proof}

\subsubsection{Maximum principle}
\label{subsec:comparison-principle}

We will use the weak maximum principle in Krylov's lecture note only in the following form.  
Let \(\Domain\) be either \(\R\) or \((0,\infty)\).
Fix $T>0$. Define
\[
    Q_T^\Domain:=(0,T)\times\Domain.
\]
Define the terminal-time parabolic boundary by
\[
    \partial'_T Q_T^\Domain
    :=
    \bigl((0,T)\times\partial\Domain\bigr)
    \cup
    \bigl(\{T\}\times\overline\Domain\bigr).
\]

Let
\begin{equation}
    \LinearOp_t u
    =
    a_2(t,x)\partial_{xx}u
    +
    a_1(t,x)\partial_xu,
    \label{eq:LinearOp}
\end{equation}
where \(a_2,a_1\) are bounded and continuous on
\([0,T]\times\overline\Domain\), and \(a_2(t,x)\ge\kappa>0\).  Let \(a_0\) be
bounded and continuous on \([0,T]\times\overline\Domain\).

\begin{theorem}[Weak maximum principle]
\label{thm:comparison}
Suppose that \(u\) is bounded and continuous on
\([0,T]\times\overline\Domain\), is \(C^{1,2}\) in \(Q_T^\Domain\), and
satisfies
\begin{equation}
  \begin{cases}
    (-\partial_s-\LinearOp_s-a_0(s,x))u(s,x) \ge0,
    &
    (s,x)\in Q_T^\Domain,
    \\[0.5ex]
    u(s,x)\ge0,
    &
    (s,x)\in \partial'_TQ_T^\Domain.
  \end{cases}
\end{equation}
Then,
\[
    u\ge0
    \qquad\text{on }[0,T]\times\overline\Domain .
\]
\end{theorem}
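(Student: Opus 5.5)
The plan is the classical barrier argument for the weak maximum principle on an unbounded domain, run backward in time. There are three reductions: remove the possibly positive zeroth-order coefficient \(a_0\) with an exponential weight in time, add a small coercive barrier so that a negative minimum is attained in a compact set, and read off a contradiction from the first- and second-order conditions at that minimum.

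\textbf{Step 1: make the zeroth-order coefficient positive.} Fix \(\lambda>\|a_0\|_\infty+1\) and write \(u(s,x)=e^{\lambda(T-s)}w(s,x)\). A direct computation gives
\[
(-\partial_s-\LinearOp_s-a_0)u=e^{\lambda(T-s)}\bigl(-\partial_s w-\LinearOp_s w+c\,w\bigr),
\qquad c:=\lambda-a_0\ge 1 .
\]
So \(w\) is bounded and continuous, is \(C^{1,2}\) in \(Q_T^\Domain\), satisfies \(-\partial_s w-\LinearOp_s w+cw\ge 0\) there, and satisfies \(w\ge0\) on \(\partial'_TQ_T^\Domain\). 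It suffices to show \(w\ge 0\).

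\textbf{Step 2: add a barrier and restrict away from \(s=0\).} Let \(\phi(s,x)=e^{\mu(T-s)}(1+x^2)\). Then
\[
-\partial_s\phi-\LinearOp_s\phi+c\phi\ge e^{\mu(T-s)}\bigl(\mu(1+x^2)-2\|a_2\|_\infty-2\|a_1\|_\infty|x|\bigr)>0,
\]
once \(\mu\) is large compared with \(\|a_1\|_\infty\) and \(\|a_2\|_\infty\). For \(\varepsilon>0\) and \(\delta\in(0,T)\), set \(z=w+\varepsilon\phi\) on \([\delta,T]\times\overline\Domain\). Then \(z\) strictly satisfies the inequality \(-\partial_s z-\LinearOp_s z+cz>0\), and \(z\ge0\) on the parabolic boundary. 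Since \(w\) is bounded, \(z\to+\infty\) as \(|x|\to\infty\) uniformly in \(s\). Hence, if \(\inf z<0\), the infimum is attained at some point \((s_0,x_0)\) with \(x_0\) in the interior of \(\Domain\) and \(s_0\in[\delta,T)\), because \(z\ge0\) at \(s=T\) and at \(x=0\) when \(\Domain=(0,\infty)\).

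\textbf{Step 3: contradiction at the minimum, then limits.} At the minimum point we have \(\partial_x z=0\) and \(\partial_{xx}z\ge0\), so \(\LinearOp_s z\ge0\) because \(a_2>0\). For the time derivative there are two cases. If \(s_0\in(\delta,T)\), then \(\partial_s z=0\). If \(s_0=\delta\), the earliest time, then minimality gives the one-sided bound \(\partial_s z(s_0,x_0)\ge0\); this uses that \(z\) is \(C^1\) in time at interior times \(s_0=\delta>0\). In either case \(-\partial_s z\le 0\), and together with \(cz<0\) this yields \(-\partial_s z-\LinearOp_s z+cz<0\), which contradicts the strict inequality. Thus \(z\ge0\) on \([\delta,T]\times\overline\Domain\). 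Letting \(\varepsilon\downarrow0\) and then \(\delta\downarrow0\), and using continuity of \(w\) up to \(s=0\), gives \(w\ge0\) and hence \(u\ge0\).

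The main point requiring care is the orientation of time. In this backward problem the free endpoint is \(s=0\), where no regularity of \(u\) is assumed. This is why I first work on \([\delta,T]\), where the one-sided derivative at the earliest time has the favourable sign, and only afterwards pass to \(\delta\to0\) by continuity. The remaining ingredient that needs checking is the choice of \(\mu\) in the barrier, so that the growth of \(1+x^2\) dominates the drift term \(2a_1x\).
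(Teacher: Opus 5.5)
Your proof is correct, and its first step matches the paper's. Writing $u=e^{\lambda(T-s)}w$ is the same damping as the paper's $\tilde u(t,x)=e^{-\lambda t}u(T-t,x)$ after substituting $t=T-s$, and it gives the zeroth-order coefficient the right sign.

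After that the routes diverge. The paper time-reverses to a forward problem and invokes Krylov's Theorem~8.1.4. You instead prove the maximum principle directly in the backward orientation:
\begin{itemize}
\item the barrier $e^{\mu(T-s)}(1+x^2)$ forces a negative minimum into a compact set;
\item the truncation to $[\delta,T]$ gives a one-sided time derivative of the favourable sign at the earliest time $s=\delta>0$, where $C^{1,2}$ regularity is still available.
\end{itemize}

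The paper's version is shorter, but it hands two issues to the reference: ruling out a negative infimum that escapes to $|x|\to\infty$ for a merely bounded solution, and the lateral boundary $\{x=0\}$ together with regularity only in the open cylinder.

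Your argument is self-contained and shows exactly where each hypothesis enters. Boundedness of $u$ and of $a_1,a_2$ is what lets $\varepsilon\phi$ dominate at infinity and absorb the drift. Continuity up to $s=0$ is used only in the final limit $\delta\downarrow 0$. Ellipticity is used only through $a_2\ge 0$ at the minimum point, so the uniform bound $a_2\ge\kappa>0$ is not actually needed.
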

\begin{proof}
Choose \(\lambda>\norm{a_0}_\infty\), and set
\[
    \tilde u(t,x):=e^{-\lambda t}u(T-t,x).
\]
Define the time-reversed coefficients
\[
    \tilde a_2(t,x):=a_2(T-t,x),
    \qquad
    \tilde a_1(t,x):=a_1(T-t,x),
    \qquad
    \tilde a_0(t,x):=a_0(T-t,x),
\]
and
\[
    \tilde{\LinearOp}_t v
    :=
    \tilde a_2(t,x)\partial_{xx}v
    +
    \tilde a_1(t,x)\partial_xv .
\]
Then \(\tilde u\ge0\) on the forward parabolic boundary
\[
    \partial'_0Q_T^\Domain
    :=
    \bigl((0,T)\times\partial\Domain\bigr)
    \cup
    \bigl(\{0\}\times\overline\Domain\bigr),
\]
and
\[
    \bigl(
    \partial_t-\tilde{\LinearOp}_t-(\tilde a_0-\lambda)
    \bigr)\tilde u
    =
    e^{-\lambda t}
    \bigl[
    (-\partial_s-\LinearOp_s-a_0)u
    \bigr](T-t,x)
    \ge0 .
\]
Since \(\tilde a_0-\lambda\le0\), applying Theorem~8.1.4~in~\cite{Krylov1996} yields
\(-\tilde u\le0\) on
\([0,T]\times\overline\Domain\). 
Reversing the sign and undoing the damping and time reversal then give
\(u\ge0\) on
\([0,T]\times\overline\Domain\).
\end{proof}

\begin{remark}
In the applications below, \(\LinearOp_s\) will usually be
\[
    \LinearOp_s=\gamma(s)L_s^\gamma,
\]
that is,
\[
    a_2(s,x)=\frac12\gamma(s),
    \qquad
    a_1(s,x)=\gamma(s)s\,\partial_x\Psi^\gamma(s,x).
\]
The zeroth-order coefficient \(a_0\) is usually zero, except after
differentiating in \(x\), where a bounded coefficient appears.
\end{remark}

\subsubsection{Schauder estimates}
\label{subsec:schauder-estimates}

The following terminal-time estimate is the form of Krylov's whole-space
Schauder theorem used below.

\begin{theorem}[Terminal-time Schauder estimate]
\label{thm:terminal-time-schauder}
Let $\LinearOp_s$ be as in \eqref{eq:LinearOp} with 
\[
    a_2,a_1\in \ParaHoelderzero{\alpha}([0,1]\times\R),
    \qquad
    \inf_{[0,1]\times\R} a_2\ge\kappa>0.
\]
Then, for every
\[
    b\in \ParaHoelderzero{\alpha}([0,1]\times\R),
    \qquad
    \varphi\in C_b^{2+\alpha}(\R),
\]
the terminal-value problem
\[
\begin{cases}
-\partial_su(s,x)=\LinearOp_su(s,x)+b(s,x),
& (s,x)\in[0,1)\times\R,\\[0.5ex]
u(1,x)=\varphi(x),
& x\in\R,
\end{cases}
\]
has a unique solution
\[
    u\in \ParaHoelder{\alpha}([0,1]\times\R).
\]
Moreover,
there exists a constant \(C\) depending only on \(\alpha,\kappa\), and the
\(C_b^{\alpha/2,\alpha}\)-bounds of \(a_2,a_1\)
such that
\[
    \norm{u}_{1+\alpha/2,2+\alpha}
    \le
    C
    \left(
        \norm{b}_{\alpha/2,\alpha}
        +
        \norm{\varphi}_{C_b^{2+\alpha}(\R)}
    \right),
\]
\end{theorem}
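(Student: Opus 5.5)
The plan is to reduce the statement to Krylov's whole-space Schauder theorem for the forward Cauchy problem with zero initial data. This requires two elementary transformations: a time reversal and a subtraction of the terminal datum.

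First, set \(t=1-s\), \(\tilde u(t,x)=u(1-t,x)\), and \(\tilde a_i(t,x)=a_i(1-t,x)\), \(\tilde b(t,x)=b(1-t,x)\). The reflection \(t\mapsto 1-t\) preserves all the parabolic H\"older norms of Section~\ref{sec:holder-estimates}, as well as the ellipticity bound \(\inf\tilde a_2\ge\kappa\). The terminal-value problem then becomes the forward problem \(\partial_t\tilde u=\tilde{\LinearOp}_t\tilde u+\tilde b\) with \(\tilde u(0,\cdot)=\varphi\).

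Next, write \(\tilde u=\varphi+w\). Then \(w\) solves \(\partial_tw=\tilde{\LinearOp}_tw+f\) with \(w(0,\cdot)=0\) and \(f:=\tilde b+\tilde a_2\varphi''+\tilde a_1\varphi'\). Since \(\varphi\), \(\varphi'\), \(\varphi''\) are constant in time and \(\varphi''\) is \(\alpha\)-H\"older in space, we have \(\varphi',\varphi''\in\ParaHoelderzero{\alpha}\) with norms bounded by \(\norm{\varphi}_{C_b^{2+\alpha}(\R)}\). The product inequality (fact (1) of Section~\ref{sec:holder-estimates}) then gives
\[
\norm{f}_{\alpha/2,\alpha}\le \norm{b}_{\alpha/2,\alpha}+\bigl(\norm{a_2}_{\alpha/2,\alpha}+\norm{a_1}_{\alpha/2,\alpha}\bigr)\norm{\varphi}_{C_b^{2+\alpha}(\R)}.
\]

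The key step is to invoke Krylov's theorem for the Cauchy problem in \(\R^d\times(0,T)\) with zero initial data (Chapter~9 of \cite{Krylov1996}, whole-space Schauder theory with H\"older-in-\((t,x)\) coefficients). That theorem yields a solution \(w\in\ParaHoelder{\alpha}([0,1]\times\R)\) with \(\norm{w}_{1+\alpha/2,2+\alpha}\le C\norm{f}_{\alpha/2,\alpha}\), where \(C\) depends only on \(\alpha\), \(\kappa\) and the \(C_b^{\alpha/2,\alpha}\)-bounds of \(a_2,a_1\). Adding back \(\varphi\) and using \(\norm{\varphi}_{1+\alpha/2,2+\alpha}\le\norm{\varphi}_{C_b^{2+\alpha}(\R)}\) (the time derivative vanishes) gives the stated estimate after undoing the time reversal.

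For uniqueness, let \(u_1,u_2\in\ParaHoelder{\alpha}\) be two solutions. Then \(\pm(u_1-u_2)\) are bounded \(C^{1,2}\) functions satisfying the homogeneous equation with zero terminal data. Theorem~\ref{thm:comparison} with \(\Domain=\R\) and \(a_0=0\), applied to both signs, gives \(u_1=u_2\).

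The main obstacle is purely bookkeeping: Krylov's statement must be matched exactly. His normalization of the operator (\(a_2\) versus \(\tfrac12 a_2\)), his forward-time convention, and his formulation with zero initial data must all be checked, together with the claim that his constant depends only on the listed quantities. If his theorem is stated only for equations on an interval \((0,T)\) with coefficients defined on the closed strip, one also checks that the closed time interval \([0,1]\) causes no issue, since the norms are taken on the closed strip. If his version requires \(d\)-dimensional notation or a slightly different H\"older seminorm in time, one appeals to the equivalence of those seminorms on bounded time intervals.
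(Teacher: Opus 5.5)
Your proposal is correct and follows essentially the same route as the paper: reverse time and invoke Krylov's whole-space Cauchy theorem (Theorem~9.2.3 in \cite{Krylov1996}). The differences are only bookkeeping. The paper also multiplies by $e^{-t}$ so that the zeroth-order coefficient becomes $-1$ and Krylov's hypotheses are met verbatim, and it takes the nonzero initial datum and uniqueness directly from that theorem. You instead subtract $\varphi$ and obtain uniqueness from Theorem~\ref{thm:comparison}. If Krylov's statement requires a negative zeroth-order coefficient, the same damping applies to your $w$ without disturbing $w(0,\cdot)=0$. Note also that $[\varphi']_{\alpha}\le 2\|\varphi'\|_\infty+\|\varphi''\|_\infty$, so your bound on $\|\varphi'\|_{\alpha/2,\alpha}$, and hence on $\|f\|_{\alpha/2,\alpha}$, holds only up to a harmless constant factor.
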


\begin{proof}
Set \(w(t,x)=u(1-t,x)\).  Then \(w\) solves the forward Cauchy problem
\[
    \partial_t w
    =
    a_2(1-t,x)\partial_{xx}w
    +
    a_1(1-t,x)\partial_xw
    +
    b(1-t,x),
    \qquad
    w(0,\cdot)=\varphi .
\]
Let \(\tilde w(t,x)=e^{-t}w(t,x)\).  Then
\[
    \partial_t \tilde w
    =
    a_2(1-t,x)\partial_{xx} \tilde w
    +
    a_1(1-t,x)\partial_x \tilde w
    -
    \tilde w
    +
    e^{-t}b(1-t,x),
    \qquad
    \tilde w(0,\cdot)=\varphi .
\]
The zeroth-order coefficient is now \(-1\), so Theorem~9.2.3~in~\cite{Krylov1996} applies.  Undoing the damping and
time reversal gives the stated solution and estimate.
\end{proof}

\subsection{Preliminaries for the Parisi PDE}
\label{sec:preliminaries}

This section collects the basic facts about the Parisi PDE used
throughout the paper.  Most of these facts are standard, and we recall them here
only to fix conventions.

The following lemma connects the time-changed Parisi PDE \eqref{eq:gamma-pde} 
to the conventional Parisi PDE.

\begin{lemma}
\label{lem:gamma-conventional-change-of-variable}
Fix \(\gamma\in\Opendenseset_\alpha\), and set
\[
    \sfq(s):=\int_0^s\gamma(r)\,\dd r,
    \qquad
    q_\gamma:=\sfq(1),
    \qquad
    \sfp(s):=\frac{\sfq(s)}{q_\gamma},
    \qquad
    \zeta:=\sfp^{-1}.
\]
Let \(\Phi_\gamma\) solves the following conventional Parisi PDE
\[
\begin{cases}
-\partial_t\Phi_\gamma(t,x)
=
\dfrac{q_\gamma}{2}
\left(
\partial_{xx}\Phi_\gamma(t,x)
+
\zeta(t)\bigl(\partial_x\Phi_\gamma(t,x)\bigr)^2
\right),
& (t,x)\in[0,1)\times\mathbb R,
\\[0.5ex]
\Phi_\gamma(1,x)=\log\cosh x,
& x\in\mathbb R.
\end{cases}
\]
Then,
\[
    \Psi^\gamma(s,x)=\Phi_\gamma(\sfp(s),x),
    \qquad (s,x)\in[0,1]\times\mathbb R .
\]
\end{lemma}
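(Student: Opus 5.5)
Define $\tilde\Psi(s,x):=\Phi_\gamma(\sfp(s),x)$ and check by the chain rule that $\tilde\Psi$ solves \eqref{eq:gamma-pde}. Then uniqueness of solutions to \eqref{eq:gamma-pde} identifies $\tilde\Psi$ with $\Psi^\gamma$. This is a direct change of variables; the only work is to keep track of the regularity needed to justify the chain rule and to invoke uniqueness.

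First, the time change. Since $\gamma\in\Opendenseset_\alpha$ is continuous with $\inf\gamma>0$, the map $\sfq$ is $C^1$ and strictly increasing with $\dot\sfq=\gamma$. Also $q_\gamma>0$, so $\sfp:[0,1]\to[0,1]$ is a $C^1$ increasing bijection with $\sfp(0)=0$, $\sfp(1)=1$ and $\dot\sfp=\gamma/q_\gamma$. Its inverse $\zeta=\sfp^{-1}$ is continuous and satisfies $\zeta(\sfp(s))=s$; here the generalized right inverse agrees with the true inverse. Hence the coefficient $\zeta$ in the conventional PDE is continuous on $[0,1]$ with values in $[0,1]$. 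The standard well-posedness theory for the Parisi PDE with a continuous (indeed bounded, nondecreasing) coefficient then gives a classical solution $\Phi_\gamma\in C^{1,2}_b$. Concretely, its $x$-derivatives are bounded, $|\partial_x\Phi_\gamma|\le1$, and it is $C^1$ in $t$ on $[0,1)$ because the right-hand side is continuous in $t$.

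Second, the chain rule. For $(s,x)\in[0,1)\times\R$,
\[
-\partial_s\tilde\Psi(s,x)=-\dot\sfp(s)\,\partial_t\Phi_\gamma(\sfp(s),x)=\frac{\gamma(s)}{q_\gamma}\cdot\frac{q_\gamma}{2}\Bigl(\partial_{xx}\Phi_\gamma+\zeta(\sfp(s))(\partial_x\Phi_\gamma)^2\Bigr)(\sfp(s),x).
\]
Using $\zeta(\sfp(s))=s$, this equals
\[
\tfrac12\gamma(s)\bigl(\partial_{xx}\tilde\Psi+s(\partial_x\tilde\Psi)^2\bigr)(s,x).
\]
The terminal condition also matches: $\tilde\Psi(1,x)=\Phi_\gamma(1,x)=\log\cosh x$. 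So $\tilde\Psi$ is a bounded-derivative classical solution of \eqref{eq:gamma-pde}.

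Third, uniqueness, which is the only mildly delicate point. Here one needs to know in which class $\Psi^\gamma$ is defined and unique. I would argue directly by comparison. Let $w=\tilde\Psi-\Psi^\gamma$. Subtracting the two equations and factoring the quadratic term gives
\[
-\partial_s w=\tfrac12\gamma\,\partial_{xx}w+\tfrac12\gamma s\,(\partial_x\tilde\Psi+\partial_x\Psi^\gamma)\,\partial_x w,\qquad w(1,\cdot)=0.
\]
This is a linear equation with bounded continuous coefficients and diffusion coefficient $\tfrac12\gamma\ge\tfrac12\inf\gamma>0$. Moreover, $w$ is bounded because $\tilde\Psi-g$ and $\Psi^\gamma-g$ are both bounded. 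Applying Theorem~\ref{thm:comparison} with $\Domain=\R$ and $a_0=0$ to $w$ and to $-w$ gives $w\equiv0$. This comparison step requires $\partial_x\tilde\Psi$, $\partial_x\Psi^\gamma$ bounded and $w$ bounded, which the regularity from the first step provides.
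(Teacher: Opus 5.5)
Your proposal is correct and follows the paper's proof. Like the paper, you set $\tilde\Psi(s,x)=\Phi_\gamma(\sfp(s),x)$, verify \eqref{eq:gamma-pde} via the chain rule and $\zeta(\sfp(s))=s$, match the terminal condition, and conclude by uniqueness. Where you differ is in making uniqueness explicit through the weak maximum principle applied to the difference, whereas the paper simply cites uniqueness of the classical solution and additionally records $\zeta\in C^{1+\alpha/2}([0,1])$. Your comparison argument presumes $\Psi^\gamma$ is taken a priori in the class with bounded $\partial_x\Psi^\gamma$ and bounded $\Psi^\gamma-\log\cosh$; these bounds cannot be drawn from Lemma~\ref{lem:Parisi-standard}, whose proof relies on this lemma.
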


\begin{proof}
Since \(\gamma\in\Opendenseset_\alpha\), we have \(q_\gamma>0\) and
\[
    \dot\sfp(s)=\frac{\gamma(s)}{q_\gamma}>0.
\]
Hence \(\sfp\in C^{1+\alpha/2}([0,1])\). Moreover,
\(\sfp(0)=0\), \(\sfp(1)=1\), and
\[
    \dot\sfp(s)\ge
    \frac{\inf_{[0,1]}\gamma}{q_\gamma}>0.
\]
Thus \(\sfp\) is strictly increasing and maps \([0,1]\) bijectively onto
itself. The inverse function theorem gives \(\zeta\in C^1([0,1])\), with
\[
    \dot\zeta(t)
    =
    \frac{q_\gamma}{\gamma(\zeta(t))}.
\]
Since \(1/\gamma\in C^{\alpha/2}([0,1])\) and \(\zeta\) is Lipschitz,
\(\dot\zeta\in C^{\alpha/2}([0,1])\). Therefore
\(\zeta\in C^{1+\alpha/2}([0,1])\), and \(\sfp\) is a
\(C^{1+\alpha/2}\)-diffeomorphism of \([0,1]\) onto itself.

Set
\[
    \widetilde\Psi(s,x):=\Phi_\gamma(\sfp(s),x).
\]
Since \(\sfp(1)=1\), the terminal condition gives
\[
    \widetilde\Psi(1,x)=\Phi_\gamma(1,x)=\log\cosh x.
\]
For \(s\in[0,1)\), the chain rule and the conventional Parisi PDE give
\[
\begin{aligned}
-\partial_s\widetilde\Psi(s,x)
&=
-\dot\sfp(s)\,\partial_t\Phi_\gamma(\sfp(s),x)
\\
&=
\frac{\gamma(s)}{q_\gamma}\cdot\frac{q_\gamma}{2}
\left(
\partial_{xx}\Phi_\gamma(\sfp(s),x)
+
\zeta(\sfp(s))
\bigl(\partial_x\Phi_\gamma(\sfp(s),x)\bigr)^2
\right)
\\
&=
\frac{\gamma(s)}2
\left(
\partial_{xx}\widetilde\Psi(s,x)
+
s\bigl(\partial_x\widetilde\Psi(s,x)\bigr)^2
\right).
\end{aligned}
\]
Thus \(\widetilde\Psi\) solves \eqref{eq:gamma-pde} with terminal condition
\(\log\cosh x\). By uniqueness of the classical solution to \eqref{eq:gamma-pde},
\(\widetilde\Psi=\Psi^\gamma\), proving the claim.
\end{proof}

\subsubsection{Regularity of the Parisi solution}
\label{sec:regularity}

The next lemma collects  the standard spatial regularity, symmetry, and derivative
bounds for the Parisi solution that will be used throughout the sequel.
\begin{lemma}
\label{lem:Parisi-standard}
Let \(\gamma\in\Opendenseset_\alpha\). Then the following hold.

\begin{enumerate}
\item For every \(j\ge1\),
\[
\partial_x^j\Psi^\gamma\in C_b([0,1]\times\mathbb R),
\]
and, for every \(j\ge0\),
\[
\partial_s\partial_x^j\Psi^\gamma
\in L^\infty([0,1]\times\mathbb R).
\]
In particular, for every $j\geq 1$,
\[
\partial_x^j\Psi^\gamma\in
C_b^{\alpha/2,\alpha}([0,1]\times\mathbb R).
\]

\item For every \(s\in[0,1]\), the function
\(x\mapsto\Psi^\gamma(s,x)\) is even,
\(x\mapsto\partial_x\Psi^\gamma(s,x)\) is odd, and
\(x\mapsto\partial_{xx}\Psi^\gamma(s,x)\) is even.

\item For all \((s,x)\in[0,1]\times\mathbb R\),
\[
|\partial_x\Psi^\gamma(s,x)|\le1,
\qquad
0\le \partial_{xx}\Psi^\gamma(s,x)\le1,
\qquad
|\partial_{xxx}\Psi^\gamma(s,x)|\le4.
\]
In particular, \(x\mapsto\Psi^\gamma(s,x)\) is convex.
\end{enumerate}
\end{lemma}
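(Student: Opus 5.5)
The plan is to reduce everything to the conventional Parisi PDE through Lemma~\ref{lem:gamma-conventional-change-of-variable}. Since $\Psi^\gamma(s,x)=\Phi_\gamma(\sfp(s),x)$ and $\sfp$ is a $C^{1+\alpha/2}$-diffeomorphism of $[0,1]$, every claim about $\Psi^\gamma$ will follow from the corresponding claim about $\Phi_\gamma$. The latter solves a Parisi PDE whose coefficient $\zeta$ is continuous, takes values in $[0,1]$, and whose diffusion constant $q_\gamma$ is positive. For $\Phi_\gamma$ I would use the standard Cole--Hopf/recursive construction. First take $\zeta$ to be a step function with values $0\le m_1\le\dots\le m_k\le 1$ on a partition $0=t_0<\dots<t_k=1$. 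Then $\Phi$ is given backwards by
\[
\Phi(t,x)=\frac1{m_j}\log\E\exp\bigl(m_j\Phi(t_j,x+\sqrt{q_\gamma(t_j-t)}Z)\bigr),
\qquad t\in[t_{j-1},t_j),
\]
where $Z$ is a standard Gaussian; for $m_j=0$ the expression is replaced by the Gaussian average. Afterwards I would pass to continuous $\zeta$ using the Lipschitz dependence of $\Phi$ on $\zeta$ in $L^1$, which is the standard Guerra/Jagannath--Tobasco estimate. Alternatively, one could work directly with the Schauder theory of Theorem~\ref{thm:terminal-time-schauder}.

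\textbf{Symmetry (2).} The terminal datum $\log\cosh$ is even, and the PDE is invariant under $x\mapsto -x$. So $\Phi(t,-x)$ solves the same terminal problem. Uniqueness of classical bounded-derivative solutions, via Theorem~\ref{thm:comparison} applied to the difference (after linearizing the quadratic term with bounded coefficients), gives evenness of $\Phi$. Oddness of $\partial_x\Phi$ and evenness of $\partial_{xx}\Phi$ then follow by differentiating.

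\textbf{Bounds (3).} Write $u=\partial_x\Phi$. Differentiating the PDE gives
\[
-\partial_t u=\tfrac{q_\gamma}2\bigl(u_{xx}+2\zeta u u_x\bigr),
\]
which is linear in $u$ with a bounded drift. Since $|u(1,\cdot)|=|\tanh|\le1$, the maximum principle (Theorem~\ref{thm:comparison} applied to $1\mp u$) gives $|u|\le1$. For $w=\partial_{xx}\Phi$, one more differentiation gives
\[
-\partial_t w=\tfrac{q_\gamma}2\bigl(w_{xx}+2\zeta uw_x+2\zeta w^2\bigr).
\]
Viewing $2\zeta w\cdot w$ as a zeroth-order term $a_0w$ with bounded $a_0$, Theorem~\ref{thm:comparison} together with $w(1)=\operatorname{sech}^2\ge0$ gives $w\ge0$. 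For the upper bound $w\le1$ I would use the recursive formula instead of the maximum principle. On each step, $\partial_{xx}\Phi$ equals a Gaussian-tilted expectation of $\partial_{xx}\Phi(t_j)$ plus $m_j$ times a conditional variance of $\partial_x\Phi(t_j)$, minus a nonnegative correction. A cleaner route is the identity
\[
\partial_{xx}\Phi+\zeta(\partial_x\Phi)^2\le 1,
\]
which holds at $t=1$ because $\operatorname{sech}^2+\tanh^2=1$ and $\zeta(1)\le1$. It is propagated by a comparison argument on $v=\partial_{xx}\Phi+\zeta(\partial_x\Phi)^2$, using $\partial_t\zeta\ge0$: the monotonicity of $\zeta$ makes the time derivative term have the right sign. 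This yields $w\le1-\zeta u^2\le1$.

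For $\partial_{xxx}\Phi$, a further differentiation gives a linear equation for $z=\partial_x w$ with source $6\zeta wz$-type terms. Combined with the terminal bound $|\partial_x^3\log\cosh|\le 4/(3\sqrt3)<1$, this gives a crude bound; I expect the constant $4$ to come out of a Gronwall or comparison estimate with a barrier, or from explicit representations along the lines of Auffinger--Chen. Smoothness (1) follows from the recursive formula (each step is a Gaussian convolution of a smooth function with bounded derivatives) and from the PDE, which expresses $\partial_t\partial_x^j\Phi$ through spatial derivatives. Transferring back to $\Psi^\gamma$ multiplies time derivatives by $\dot\sfp$, which is bounded. The H\"older membership is then immediate because bounded $\partial_s$ and $\partial_x$ imply Lipschitz continuity in both variables.

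\textbf{Main obstacle.} The step I expect to be hardest is the upper bound $\partial_{xx}\Psi\le1$, together with the constant in the third-derivative bound. These require the structural identity with $\zeta$ rather than a plain maximum principle.
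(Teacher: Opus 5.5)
Your reduction to the conventional equation via Lemma~\ref{lem:gamma-conventional-change-of-variable} and your symmetry argument match the paper. Your maximum-principle proofs of $|\partial_x\Phi_\gamma|\le 1$ and $\partial_{xx}\Phi_\gamma\ge 0$ are fine. Your route to $\partial_{xx}\Phi_\gamma\le 1$ is also correct, and it is more self-contained than the paper, which simply cites Auffinger--Chen and Talagrand. With $\mathcal L=\frac{q_\gamma}{2}\partial_{xx}+q_\gamma\zeta\,\partial_x\Phi_\gamma\,\partial_x$ one checks that $(\partial_t+\mathcal L)\bigl(\partial_{xx}\Phi_\gamma+\zeta(\partial_x\Phi_\gamma)^2\bigr)=\dot\zeta\,(\partial_x\Phi_\gamma)^2\ge 0$. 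The terminal value is exactly $1$ because $\zeta(1)=1$. Since $\zeta\in C^{1+\alpha/2}$ here, no step-function approximation is needed. This is the paper's later identity $\paraop_s v^\gamma=(\partial_x\Psi^\gamma)^2$ from Lemma~\ref{lem:Dv-rule}, written in the original time variable.

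The genuine gap is the bound $|\partial_{xxx}\Psi^\gamma|\le 4$. Set $z=\partial_{xxx}\Phi_\gamma$; then $-(\partial_t+\mathcal L)z=3q_\gamma\zeta\,\partial_{xx}\Phi_\gamma\,z$. Feynman--Kac with $0\le\zeta\,\partial_{xx}\Phi_\gamma\le 1$ gives only $|z(t,x)|\le\frac{4}{3\sqrt3}e^{3q_\gamma(1-t)}$. This exceeds $4$ as soon as $q_\gamma>\frac13\log(3\sqrt3)\approx 0.55$. So the Gronwall route you name does not yield the stated constant, and you specify no barrier that would. The paper gets the constant $4$ only by citing Proposition~2 of Auffinger--Chen (uniqueness of the Parisi measure) and (14.272) of Talagrand's book. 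You must either cite these or reproduce their argument. Later sections of the paper use only boundedness of $\partial_{xxx}\Psi^\gamma$, which your Gronwall bound does provide, but the lemma as stated needs the constant.

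A softer gap is in (1). The $L^1$-Lipschitz dependence of $\Phi$ on $\zeta$ controls only $\Phi$ itself in sup norm. To pass $C_b$-regularity of every $\partial_x^j\Phi$ from step functions to the limit, you need discretization-independent bounds on all spatial derivatives, plus equicontinuity and Arzel\`a--Ascoli. That is the content of Theorem~4 of Jagannath--Tobasco, which the paper cites. The linear Schauder theory of Theorem~\ref{thm:terminal-time-schauder} does not by itself produce a solution of the nonlinear equation. This must be settled before (3), since your maximum-principle steps need it. For instance, $\partial_{xx}\Phi_\gamma+\zeta(\partial_x\Phi_\gamma)^2$ must be $C^{1,2}$, which requires bounded continuous $\partial_x^4\Phi_\gamma$.
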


\begin{proof}
With the notation of Lemma~\ref{lem:gamma-conventional-change-of-variable},
\[
    \Psi^\gamma(s,x)
    =
    \Phi_\gamma(\sfp(s),x),
    \qquad
    \xi_\gamma(t)=\frac{q_\gamma}{2}t^2 .
\]
Therefore the regularity property for the conventional Parisi PDE, for instance
Theorem~4~in~\cite{JagannathTobasco2016}, gives
\[
    \partial_x^j\Psi^\gamma\in C_b([0,1]\times\mathbb R),
    \qquad j\ge1.
\]
Using the equation
\[
-\partial_s\Psi^\gamma
=
\frac12\gamma(s)
\left(
\partial_{xx}\Psi^\gamma
+
s(\partial_x\Psi^\gamma)^2
\right),
\]
and differentiating \(j\) times in \(x\), we get
\[
\partial_s\partial_x^j\Psi^\gamma
=
-\frac12\gamma(s)
\partial_x^j
\left(
\partial_{xx}\Psi^\gamma
+
s(\partial_x\Psi^\gamma)^2
\right).
\]
The right-hand side is bounded because \(\gamma\) is bounded and all positive
spatial derivatives of \(\Psi^\gamma\) are bounded.  Hence
\[
    \partial_s\partial_x^j\Psi^\gamma
    \in L^\infty([0,1]\times\mathbb R),
    \qquad j\ge0.
\]
In particular, for all $j\geq 1$, \(\partial_x^j\Psi^\gamma\) is Lipschitz in both \(s\) and \(x\);
therefore
\[
    \partial_x^j\Psi^\gamma
    \in C_b^{\alpha/2,\alpha}([0,1]\times\mathbb R),
    \qquad j\ge1.
\]

The terminal datum \(x\mapsto\log\cosh x\) is even, and the equation is
invariant under \(x\mapsto -x\).  By uniqueness,
\[
    \Psi^\gamma(s,x)=\Psi^\gamma(s,-x).
\]
Differentiating in \(x\) gives the parity assertions.

Finally, by the same change of variables and the standard Parisi derivative
bounds, see Proposition~2~in~\cite{AuffingerChen2015} and
(14.272)~in~\cite{Talagrand2011},
\[
|\partial_x\Psi^\gamma|\le1,
\qquad
0\le\partial_{xx}\Psi^\gamma\le1,
\qquad
|\partial_{xxx}\Psi^\gamma|\le4.
\]
The convexity of \(x\mapsto\Psi^\gamma(s,x)\) follows from
\(\partial_{xx}\Psi^\gamma\ge0\).
\end{proof}

\section{Regularity of the Parisi solution map}
\label{sec:solution-map-regularity}

The purpose of this section is to prove Theorem~\ref{thm:concavity}(1),
which goes through Schauder estimates plus the Banach implicit function theorem.

Recall that 
\begin{equation*}
g(x)=\log\cosh x, \qquad \bm\Psi(\gamma)=\Psi^\gamma-g
\end{equation*}
for $\gamma\in\Opendenseset_\alpha$.

For $t\in [0,1]$, we introduce the Banach subspace of $\ParaHoelder{\alpha}([0,1]\times\R)$
defined by
\[
  \mathcal X_t
  :=
  \Bigl\{
    u\in \ParaHoelder{\alpha}([0,1]\times\R):
    u(t,\cdot)=0
  \Bigr\}
  \subseteq
  \ParaHoelder{\alpha}([0,1]\times\R).
\]

Before introducing the nonlinear residual map, we verify that the normalized
Parisi solution $\bm\Psi(\gamma)$ actually lies in $\mathcal X_1$.  

\begin{lemma}
\label{lem:parisi-schauder-class}
Let \(\gamma\in\Opendenseset_\alpha\). Then
\[
\bm\Psi(\gamma)=\Psi^\gamma-\log\cosh
\in
\mathcal X_1.
\]
\end{lemma}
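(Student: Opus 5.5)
The plan is to show that $u:=\bm\Psi(\gamma)=\Psi^\gamma-g$ solves a linear terminal-value problem with zero terminal data and a H\"older source term, and then to read off membership in $\ParaHoelder{\alpha}$ from Theorem~\ref{thm:terminal-time-schauder}. The condition $u(1,\cdot)=0$ is immediate from the terminal condition in \eqref{eq:gamma-pde}.

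First, I subtract $g$ from \eqref{eq:gamma-pde}. Since $g$ does not depend on $s$, we get
\[
-\partial_s u=\tfrac12\gamma(s)\,\partial_{xx}u+b,\qquad b(s,x):=\tfrac12\gamma(s)\Bigl(g''(x)+s\bigl(\partial_x\Psi^\gamma(s,x)\bigr)^2\Bigr),
\]
with $u(1,\cdot)=0$. Here I treat the quadratic term as part of the source, using the already-known $\Psi^\gamma$; this keeps the operator linear, with $a_2=\gamma/2\ge\tfrac12\inf\gamma>0$ and $a_1=0$. Clearly $a_2\in\ParaHoelderzero{\alpha}$, because $\gamma\in C^{\alpha/2}([0,1])$ and it is constant in $x$.

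Next, I check that $b\in\ParaHoelderzero{\alpha}([0,1]\times\R)$. The function $g''=\operatorname{sech}^2$ is bounded and Lipschitz, so its lift lies in $\ParaHoelderzero{\alpha}$. By Lemma~\ref{lem:Parisi-standard}(1), $\partial_x\Psi^\gamma\in\ParaHoelderzero{\alpha}$. The factor $s$ and the factor $\gamma(s)$ are $C^{\alpha/2}$ in time. The product estimate for $\ParaHoelderzero{\alpha}$ recorded in Section~\ref{sec:holder-estimates} then gives $b\in\ParaHoelderzero{\alpha}$.

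Theorem~\ref{thm:terminal-time-schauder} now produces a unique $\tilde u\in\ParaHoelder{\alpha}$ solving this linear problem with $\varphi=0$. The remaining step, which I expect to be the only delicate point, is to identify $\tilde u$ with $u$. The difficulty is that $u$ itself is unbounded a priori only if $\Psi^\gamma$ grows faster than $g$, so I must check that it does not. By Lemma~\ref{lem:Parisi-standard}(1) and the PDE, $\partial_s\Psi^\gamma$ is bounded. Hence $u(s,x)=-\int_s^1\partial_s\Psi^\gamma(r,x)\,dr$ is bounded, and $u$ is $C^{1,2}$ with bounded derivatives. The difference $w=u-\tilde u$ is then bounded, continuous and classical, and it solves $-\partial_s w=\tfrac12\gamma\,\partial_{xx}w$ with $w(1,\cdot)=0$. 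Applying Theorem~\ref{thm:comparison} with $\Domain=\R$ to $\pm w$ gives $w=0$. Therefore $u=\tilde u\in\mathcal X_1$.
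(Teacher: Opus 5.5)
Your proposal is correct and follows the paper's proof. Like the paper, you move the quadratic term into the source $b\in\ParaHoelderzero{\alpha}$, so that $u=\Psi^\gamma-g$ solves a linear terminal problem with zero data, and you then invoke Theorem~\ref{thm:terminal-time-schauder}; your additional step (boundedness of $u$ from the bounded $\partial_s\Psi^\gamma$, then the weak maximum principle applied to $\pm(u-\tilde u)$) makes explicit the identification of $u$ with the Schauder solution that the paper leaves implicit, and it is handled correctly.
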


\begin{proof}
Define
\[
u(s,x):=\bm\Psi(\gamma)(s,x)=\Psi^\gamma(s,x)-g(x).
\]
Since \(\Psi^\gamma(1,\cdot)=g\), we have \(u(1,\cdot)=0\).  It remains to
show that \(u\in\ParaHoelder{\alpha}([0,1]\times\R)\).

By Lemma~\ref{lem:Parisi-standard}(1),
\[
\partial_x\Psi^\gamma\in
C_b^{\alpha/2,\alpha}([0,1]\times\R).
\]
The function \(u\) satisfies
\begin{equation}
\label{eq:normalized-parisi-linear-equation}
-\partial_su
-
\frac12\gamma(s)\partial_{xx}u
=
b(s,x),
\qquad
u(1,\cdot)=0,
\end{equation}
where
\[
b(s,x)
\coloneqq
\frac12\gamma(s)
\left(
g''(x)+s(\partial_x\Psi^\gamma(s,x))^2
\right).
\]
Note that \(b\in \ParaHoelderzero{\alpha}([0,1]\times\R)\).
Indeed, \(g''=\sech^2\in C_b^\infty(\R)\), \(\gamma\in C^{\alpha/2}([0,1])\),
\(\partial_x\Psi^\gamma\in C_b^{\alpha/2,\alpha}\), and
\(C_b^{\alpha/2,\alpha}\) is a Banach algebra.  Applying
Theorem~\ref{thm:terminal-time-schauder} gives
\(u\in\ParaHoelder{\alpha}([0,1]\times\R)\), and hence \(u\in\mathcal X_1\).
\end{proof}

After subtracting the terminal datum \(g=\log\cosh\), the Parisi PDE can be
written as the zero set of a nonlinear residual map on \(\mathcal{X}_1\).
This allows us
to apply the Banach implicit function theorem.

\begin{proposition}
\label{prop:solution-map-C2}
Define the map $\Residual:\Opendenseset_\alpha\times \mathcal{X}_1\rightarrow \ParaHoelderzero{\alpha}([0,1]\times\R)$
by
\begin{equation}
\label{eq:nonlinear-residual-map}
  \Residual(\gamma,u)
  \coloneqq
  -\partial_su
  -
  \frac12\gamma(s)\mathfrak A(u),
  \qquad 
  \mathfrak A(u)
  \coloneqq
  \partial_{xx}u+g''+s(\partial_xu+g')^2.
\end{equation}
Then, $\Residual$ satisfies the following properties.
\begin{enumerate}
\item The map $\Residual$ is well-defined and 
$\Residual\in  C^2
  \bigl(
  \Opendenseset_\alpha\times\mathcal X_1
  ;\, \ParaHoelderzero{\alpha}([0,1]\times\R)
  \bigr)$.
\item For every \(\gamma\in\Opendenseset_\alpha\),
\[
  \Residual(\gamma,\Psi^\gamma-\log\cosh)=0.
\]

\item Let $\mathfrak L_s^u\coloneqq \frac{1}{2}\partial_{xx}+s(\partial_x u + g')\partial_x$.
Then,
\begin{align}
  D_\gamma\Residual(\gamma,u)[h]
  &=
  -\frac12 h(s)
  \mathfrak A(u), 
  \label{eq:DN-gamma-formula} \\
  D_u\Residual(\gamma,u)[v]
  &=
  -\partial_sv
  -
  \gamma(s)
  \mathfrak L_s^u v,
  \label{eq:DN-u-formula} \\
  D^2\Residual(\gamma,u)[(h_1,v_1),(h_2,v_2)]
  &=
-
h_1(s)
\mathfrak L_s^u v_2
-
h_2(s)
\mathfrak L_s^u v_1
-
\gamma(s)s\,\partial_xv_1\,\partial_xv_2.
\label{eq:DN2-formula}
\end{align}
\end{enumerate}
\end{proposition}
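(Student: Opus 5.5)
The plan is to write $\Residual$ as a sum of compositions of bounded linear and bilinear maps between Banach spaces. Smoothness will then follow because continuous polynomial maps are $C^\infty$. Once well-definedness is checked, items (2) and (3) reduce to direct computation.

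\emph{Well-definedness and regularity.} Take $u\in\mathcal X_1$. By Lemma~\ref{lem:X-to-Y-derivatives}, the maps $u\mapsto\partial_su$, $u\mapsto\partial_xu$ and $u\mapsto\partial_{xx}u$ are bounded linear maps into $\ParaHoelderzero{\alpha}([0,1]\times\R)$. Since $g'=\tanh$ and $g''=\sech^2$ lie in $C_b^\infty(\R)$, their lifts are constant elements of $\ParaHoelderzero{\alpha}$. The multiplier $s$ also lies in $\ParaHoelderzero{\alpha}$. Because $\ParaHoelderzero{\alpha}$ is a Banach algebra (standard fact (1)), the term $s(\partial_xu+g')^2$ is a continuous quadratic polynomial in $u$ with values in $\ParaHoelderzero{\alpha}$. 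Hence $\mathfrak A$ is a continuous polynomial map $\mathcal X_1\to\ParaHoelderzero{\alpha}$ of degree two.

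The lift $\gamma\mapsto\gamma(s)$ is a bounded linear map $C^{\alpha/2}([0,1])\to\ParaHoelderzero{\alpha}$, with norm at most one. Therefore $(\gamma,u)\mapsto\gamma\,\mathfrak A(u)$ is a bounded bilinear map composed with continuous polynomials, so it is a continuous polynomial map of total degree three. Continuous polynomials between Banach spaces are $C^\infty$, and $\Opendenseset_\alpha$ is open in $C^{\alpha/2}([0,1])$. Together these give $\Residual\in C^2$, in fact $C^\infty$.

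\emph{Item (2).} Set $u=\Psi^\gamma-g$. Lemma~\ref{lem:parisi-schauder-class} gives $u\in\mathcal X_1$. Substituting into \eqref{eq:gamma-pde}, we have $\partial_{xx}u+g''=\partial_{xx}\Psi^\gamma$ and $\partial_xu+g'=\partial_x\Psi^\gamma$. Hence $\mathfrak A(u)=\partial_{xx}\Psi^\gamma+s(\partial_x\Psi^\gamma)^2$, and also $\partial_s u=\partial_s\Psi^\gamma$. The PDE \eqref{eq:gamma-pde} then says exactly that $\Residual(\gamma,u)=0$.

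\emph{Item (3).} I will expand $\Residual(\gamma+h,u+v)$ and collect terms by homogeneity.
\begin{enumerate}
\item The part linear in $h$ is $-\tfrac12h\mathfrak A(u)$, which is \eqref{eq:DN-gamma-formula}.
\item The part linear in $v$ is $-\partial_sv-\tfrac12\gamma\bigl(\partial_{xx}v+2s(\partial_xu+g')\partial_xv\bigr)=-\partial_sv-\gamma\mathfrak L_s^uv$, which is \eqref{eq:DN-u-formula}.
\item The quadratic part is $-\tfrac12 h\bigl(\partial_{xx}v+2s(\partial_xu+g')\partial_xv\bigr)-\tfrac12\gamma s(\partial_xv)^2$.
\end{enumerate}
The second derivative is obtained by polarizing the quadratic part, which gives the symmetric bilinear form \eqref{eq:DN2-formula}. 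There is no $h_1h_2$ term because $\Residual$ is affine in $\gamma$. The only point needing care is the continuity bookkeeping in the well-definedness step, namely confirming that the product with $s$ and with the lifted $\gamma$ stays in $\ParaHoelderzero{\alpha}$. This is routine given the Banach algebra property, so I expect no real obstacle.
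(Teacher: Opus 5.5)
Your proposal is correct and follows essentially the paper's own route. Lemma~\ref{lem:X-to-Y-derivatives} and the Banach algebra property make $\Residual$ a continuous polynomial (affine in $\gamma$, quadratic in $u$), hence $C^2$; item (2) is direct substitution into \eqref{eq:gamma-pde}; and item (3) comes from differentiating that affine/quadratic structure. One bookkeeping note: $D^2\Residual[(h,v),(h,v)]$ equals twice the quadratic part you isolated. Your stated bilinear form is nevertheless correct, so this factor is already implicitly accounted for in your polarization.
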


\begin{remark}
    Lemma~\ref{lem:parisi-schauder-class} ensures
that the expression \(\Residual(\gamma,\Psi^\gamma-\log\cosh)\) 
in Proposition~\ref{prop:solution-map-C2}(2) is well-defined.
\end{remark}

\begin{proof}
We assert the proposition as follows.
\paragraph{\emph{Proof of (1).}}
By Lemma~\ref{lem:X-to-Y-derivatives}, the maps
\[
u\mapsto \partial_su, 
\qquad
u\mapsto \partial_xu,
\qquad
u\mapsto \partial_{xx}u
\] 
are bounded linear maps from \(\mathcal X_1\) to
\(\ParaHoelderzero{\alpha}([0,1]\times\R)\), 
so
\[
u\mapsto\partial_xu+g',
\qquad
u\mapsto\partial_{xx}u+g''
\] 
are affine continuous
from \(\mathcal X_1\) to
\(\ParaHoelderzero{\alpha}([0,1]\times\R)\).  
Therefore,
\[
    \Residual(\gamma,u)
    =
    -\partial_s u
    -
    \frac{1}{2}\gamma(s)\mathfrak A(u)
    =
    -\partial_s u
    -
    \frac{1}{2}\gamma(s)
    \bigl(
    \partial_{xx}u+g''+s(\partial_xu+g')^2
    \bigr)
\]
belongs to \(C^2(\Opendenseset_\alpha\times \mathcal X_1;\, \ParaHoelderzero{\alpha}([0,1]\times\R))\),
as squaring is a \(C^2\) operation
and multiplication by the lifted function \(s\) is bounded linear.

\paragraph{\emph{Proof of (2).}}
Let
\[
  u=\Psi^\gamma-g.
\]
By Lemma~\ref{lem:parisi-schauder-class}, \(u\in\mathcal X_1\). Moreover,
\[
  \partial_su=\partial_s\Psi^\gamma,
  \qquad
  \partial_xu+g'=\partial_x\Psi^\gamma,
  \qquad
  \partial_{xx}u+g''=\partial_{xx}\Psi^\gamma.
\]
Therefore
\[
  \mathfrak A(u)
  =
  \partial_{xx}\Psi^\gamma
  +
  s(\partial_x\Psi^\gamma)^2.
\]
Consequently, by \eqref{eq:gamma-pde},
\begin{equation*}
  \Residual(\gamma,\Psi^\gamma-g)
  =
  -\partial_s\Psi^\gamma
  -
  \frac12\gamma(s)
  \bigl(
    \partial_{xx}\Psi^\gamma
    +
    s(\partial_x\Psi^\gamma)^2
  \bigr)
  =0.
\end{equation*}

\paragraph{\emph{Proof of (3).}}
Let
\[
  h,h_1,h_2\in\Hoelder{\alpha/2}([0,1]),
  \qquad
  v,v_1,v_2\in\mathcal X_1.
\]
Since \(\Residual\) is affine in \(\gamma\),
\[
  D_\gamma\Residual(\gamma,u)[h]
  =
  -\frac12h(s)\mathfrak A(u),
\]
which is \eqref{eq:DN-gamma-formula}.

Since
  $D\mathfrak A(u)[v]=2\mathfrak L_s^u v$,
one has
\begin{equation*}
  D_u\Residual(\gamma,u)[v]
  =
  -\partial_sv
  -
  \frac12\gamma(s)D\mathfrak A(u)[v]
  =
  -\partial_sv-\gamma(s)\mathfrak L_s^u v,
\end{equation*}
which is \eqref{eq:DN-u-formula}.

Finally, using
\[
  D^2\mathfrak A(u)[v_1,v_2]
  =
  2s\,\partial_xv_1\,\partial_xv_2,
\]
and differentiating the preceding identities, we obtain
\begin{equation*}
D^2\Residual(\gamma,u)[(h_1,v_1),(h_2,v_2)]
=
-h_1(s)\mathfrak L_s^u v_2
-h_2(s)\mathfrak L_s^u v_1
-\gamma(s)s\,\partial_xv_1\,\partial_xv_2,
\end{equation*}
which is \eqref{eq:DN2-formula}.
\end{proof}

We next verify that the isomorphism condition
required to apply the implicit function theorem holds.

\begin{proposition}
\label{prop:linearized-isomorphism}
For every \(\gamma\in\Opendenseset_\alpha\), the linear map
\[
  D_u \Residual(\gamma,\Psi^\gamma-\log\cosh):\mathcal X_1\rightarrow\ParaHoelderzero{\alpha}([0,1]\times\R)
\]
is a Banach space isomorphism. Moreover,
\begin{equation}
\label{eq:DuN-linearized-operator}
  D_u\Residual(\gamma,\Psi^\gamma-\log\cosh)[v]
  =
  -\paraop_s v.
\end{equation}
\end{proposition}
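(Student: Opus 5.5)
First I would verify the formula \eqref{eq:DuN-linearized-operator}. By \eqref{eq:DN-u-formula} in Proposition~\ref{prop:solution-map-C2}(3), with \(u=\Psi^\gamma-g\) we have \(\partial_xu+g'=\partial_x\Psi^\gamma\). Hence \(\mathfrak L_s^u=\frac12\partial_{xx}+s\,\partial_x\Psi^\gamma\,\partial_x=L_s^\gamma\), and therefore
\[
    D_u\Residual(\gamma,\Psi^\gamma-g)[v]
    =
    -\partial_sv-\gamma(s)L_s^\gamma v
    =
    -\paraop_s v.
\]
Boundedness of this linear map from \(\mathcal X_1\) to \(\ParaHoelderzero{\alpha}([0,1]\times\R)\) follows from Lemma~\ref{lem:X-to-Y-derivatives} and the Banach algebra property. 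Here I use that the coefficient \(\gamma(s)s\,\partial_x\Psi^\gamma\) lies in \(\ParaHoelderzero{\alpha}\), which holds by Lemma~\ref{lem:Parisi-standard}(1) and \(\gamma\in\Hoelder{\alpha/2}\).

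Next I would show that \(-\paraop_s\) is bijective, by applying Theorem~\ref{thm:terminal-time-schauder} with
\[
    a_2=\tfrac12\gamma(s),\qquad a_1=\gamma(s)s\,\partial_x\Psi^\gamma,\qquad \varphi=0.
\]
Both coefficients are in \(\ParaHoelderzero{\alpha}\), and \(a_2\ge\frac12\inf\gamma=:\kappa>0\) because \(\gamma\in\Opendenseset_\alpha\). Now fix \(b\in\ParaHoelderzero{\alpha}\). The theorem gives a unique \(v\in\ParaHoelder{\alpha}\) with \(-\partial_sv=\gamma L^\gamma_s v+b\) and \(v(1,\cdot)=0\). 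Such a \(v\) lies in \(\mathcal X_1\) and satisfies \(-\paraop_sv=b\), so the map is surjective.

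For injectivity, suppose \(v\in\mathcal X_1\) and \(\paraop_sv=0\). Then \(v\) solves the same problem with \(b=0\) and \(\varphi=0\). By the uniqueness part of Theorem~\ref{thm:terminal-time-schauder}, applied within \(\ParaHoelder{\alpha}\), we get \(v=0\). As a cross-check, one could also apply the weak maximum principle, Theorem~\ref{thm:comparison} with \(\Domain=\R\), to both \(v\) and \(-v\).

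Finally, the Schauder estimate yields \(\|v\|_{1+\alpha/2,2+\alpha}\le C\|b\|_{\alpha/2,\alpha}\), so the inverse is bounded. The open mapping theorem would give this as well. Hence \(D_u\Residual(\gamma,\Psi^\gamma-\log\cosh)\) is a Banach space isomorphism.

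There is no serious obstacle here. The only point needing care is checking the hypotheses of the Schauder theorem, specifically the Hölder regularity of the drift \(s\,\partial_x\Psi^\gamma\). Lemma~\ref{lem:Parisi-standard}(1) already supplies this, since it gives Lipschitz continuity in both variables.
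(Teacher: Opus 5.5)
Your proposal is correct and matches the paper's proof essentially step for step. You identify $\mathfrak L_s^{u}$ with $L_s^\gamma$ via $\partial_x u+g'=\partial_x\Psi^\gamma$, then apply Theorem~\ref{thm:terminal-time-schauder} with $a_2=\frac12\gamma$, $a_1=\gamma(s)s\,\partial_x\Psi^\gamma$ and $\varphi=0$ to get surjectivity, uniqueness and the bounded inverse; the boundedness check for the forward map and the maximum-principle cross-check are harmless additions that the paper leaves implicit.
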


\begin{proof}
Set
\[
    u_\gamma:=\Psi^\gamma-g .
\]
By \eqref{eq:DN-u-formula} in Proposition~\ref{prop:solution-map-C2}, for
\(v\in\mathcal X_1\),
\[
  D_u\Residual(\gamma,u_\gamma)[v]
  =
  -\partial_sv-\gamma(s)\mathfrak L_s^{u_\gamma}v.
\]
Since
\[
\mathfrak L_s^{u_\gamma}
=
\frac12\partial_{xx}
+
s(\partial_xu_\gamma+g')\partial_x
=
\frac12\partial_{xx}
+
s\partial_x\Psi^\gamma\,\partial_x
=
L_s^\gamma,
\]
we obtain
\[
  D_u\Residual(\gamma,\Psi^\gamma-\log\cosh)[v]
  =
  -\partial_sv-\gamma(s)L_s^\gamma v
  =
  -\paraop_s v.
\]
This proves \eqref{eq:DuN-linearized-operator}.

It remains to show that
$
    -\paraop_s:\mathcal X_1
    \rightarrow
    \ParaHoelderzero{\alpha}([0,1]\times\R)
$
is a Banach-space isomorphism.  

Let
\[
    f\in\ParaHoelderzero{\alpha}([0,1]\times\R).
\]
The equation
\begin{equation}
    -\paraop_s v=f,
    \qquad
    v(1,\cdot)=0,
    \label{eq:Dvf}
\end{equation}
is equivalent to
\[
    -\partial_sv
    =
    \frac12\gamma(s)\partial_{xx}v
    +
    \gamma(s)s\,\partial_x\Psi^\gamma(s,x)\partial_xv
    +
    f.
\]
This is exactly the form covered by
Theorem~\ref{thm:terminal-time-schauder}, with
\[
    a_2(s,x)=\frac12\gamma(s),
    \qquad
    a_1(s,x)=\gamma(s)s\,\partial_x\Psi^\gamma(s,x),
    \qquad
    b=f,
    \qquad
    \varphi=0.
\]
The coefficient \(a_2\) is uniformly elliptic because
\(\inf_{[0,1]}\gamma>0\).  Moreover,
\[
    a_2,
    a_1\in\ParaHoelderzero{\alpha}([0,1]\times\R),
\]
where the second inclusion follows from Lemma~\ref{lem:Parisi-standard}, the
lift convention, and the Banach algebra property of
\(\ParaHoelderzero{\alpha}([0,1]\times\R)\).

Therefore Theorem~\ref{thm:terminal-time-schauder} gives a unique
\[
    v\in\ParaHoelder{\alpha}([0,1]\times\R)
\]
solving the terminal problem \eqref{eq:Dvf}, and
\[
    \norm{v}_{1+\alpha/2,2+\alpha}
    \le
    C_\gamma\norm{f}_{\alpha/2,\alpha}.
\]
Since \(v(1,\cdot)=0\), \(v\in \mathcal X_1\).  
Hence,
\(-\paraop_s\) is surjective and has bounded inverse.  

Injectivity follows
from the uniqueness part of Theorem~\ref{thm:terminal-time-schauder}.  Thus
\[
    -\paraop_s:\mathcal X_1
    \rightarrow
    \ParaHoelderzero{\alpha}([0,1]\times\R)
\]
is a Banach-space isomorphism.  By
\eqref{eq:DuN-linearized-operator}, the same is true for
\(D_u\Residual(\gamma,\Psi^\gamma-\log\cosh)\).
\end{proof}

\subsection{Proof of Theorem~\ref{thm:concavity}(1)}
\label{subsec:section3-implicit-function}

  The requirements of applying the Banach implicit function theorem 
  (cf. Theorem~5.9~in~\cite{Lang1999}) are fulfilled by (1) and (2) 
  from Proposition~\ref{prop:solution-map-C2} and Proposition~\ref{prop:linearized-isomorphism}.
  This yields that for every \(\gamma_0\in\Opendenseset_\alpha\), there exist a neighbourhood
\(\mathcal V\subset\Opendenseset_\alpha\) of \(\gamma_0\) and a unique \(C^2\) map
\[
\mathcal V\ni\gamma\longmapsto u(\gamma)\in\mathcal X_1
\]
such that
\[
\Residual(\gamma,u(\gamma))=0,
\qquad
u(\gamma_0)=\Psi^{\gamma_0}-\log\cosh=\bm{\Psi}(\gamma_0).
\]
Therefore, $\bm\Psi$ is $C^2$ in the Fr\'echet sense, as desired. 

\subsection{PDEs for the Fr\'echet derivatives}
\label{subsec:variation-equations}

Since \(g\) is independent of \(\gamma\), we define, for
\(h,h_1,h_2\in\Hoelder{\alpha/2}([0,1])\),
\[
  V_h^\gamma
  =
  D\Psi^\gamma[h]
  :=
  D\bm\Psi(\gamma)[h],
\]
and
\[
  W_{h_1,h_2}^\gamma
  :=
  D^2\Psi^\gamma[h_1,h_2]
  :=
  D^2\bm\Psi(\gamma)[h_1,h_2].
\]

\begin{corollary}[First and second variation equations]
\label{cor:variation-equations}
Let \(\gamma\in\Opendenseset_\alpha\) and
\(h,h_1,h_2\in\Hoelder{\alpha/2}([0,1])\). Then
\(V_h^\gamma\in\mathcal X_1\) is the unique solution of
\begin{equation}
\label{eq:linearized-parisi}
\begin{cases}
-\partial_s V_h^\gamma
=
\gamma(s)L_s^\gamma V_h^\gamma
+
\dfrac12h(s)v^\gamma,
& (s,x)\in[0,1)\times\R,
\\[0.6ex]
V_h^\gamma(1,x)=0,
& x\in\R,
\end{cases}
\end{equation}
where \(v^\gamma\) is the quantity fixed in Section~\ref{sec:preliminaries}.
Moreover, \(W_{h_1,h_2}^\gamma\in\mathcal X_1\) is the unique solution of
\begin{equation}
\label{eq:second-linearized-parisi}
\begin{cases}
\begin{aligned}
-\partial_s W_{h_1,h_2}^\gamma
={}&
\gamma(s)L_s^\gamma W_{h_1,h_2}^\gamma
+
h_1(s)L_s^\gamma V_{h_2}^\gamma
+
h_2(s)L_s^\gamma V_{h_1}^\gamma
\\
&+
\gamma(s)s\,\partial_xV_{h_1}^\gamma\,\partial_xV_{h_2}^\gamma,
\end{aligned}
& (s,x)\in[0,1)\times\R,
\\[0.6ex]
W_{h_1,h_2}^\gamma(1,x)=0,
& x\in\R.
\end{cases}
\end{equation}
The map \(h\mapsto V_h^\gamma\) is linear, and
\((h_1,h_2)\mapsto W_{h_1,h_2}^\gamma\) is symmetric bilinear.
\end{corollary}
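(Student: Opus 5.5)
The plan is to differentiate the identity \(\Residual(\gamma,\bm\Psi(\gamma))=0\), which holds on \(\Opendenseset_\alpha\) by Proposition~\ref{prop:solution-map-C2}(2) together with Lemma~\ref{lem:parisi-schauder-class}. The chain rule applies because \(\bm\Psi\) is \(C^2\) by Theorem~\ref{thm:concavity}(1) and \(\Residual\) is \(C^2\) by Proposition~\ref{prop:solution-map-C2}(1). Since \(\bm\Psi\) takes values in \(\mathcal X_1\), its derivatives \(V^\gamma_h=D\bm\Psi(\gamma)[h]\) and \(W^\gamma_{h_1,h_2}=D^2\bm\Psi(\gamma)[h_1,h_2]\) also lie in the closed subspace \(\mathcal X_1\). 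In particular they vanish at \(s=1\), which gives the terminal conditions.

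For the first variation, differentiating once in direction \(h\) gives
\[
D_\gamma\Residual(\gamma,u_\gamma)[h]+D_u\Residual(\gamma,u_\gamma)[V_h^\gamma]=0,\qquad u_\gamma=\Psi^\gamma-g.
\]
By \eqref{eq:DN-gamma-formula}, the first term equals \(-\tfrac12 h\,\mathfrak A(u_\gamma)=-\tfrac12 h\,v^\gamma\), since \(\mathfrak A(u_\gamma)=\partial_{xx}\Psi^\gamma+s(\partial_x\Psi^\gamma)^2\). By Proposition~\ref{prop:linearized-isomorphism}, the second term equals \(-\paraop_s V_h^\gamma\). Together these give \eqref{eq:linearized-parisi}. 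Uniqueness in \(\mathcal X_1\) follows because \(-\paraop_s\) is an isomorphism from \(\mathcal X_1\) onto \(\ParaHoelderzero{\alpha}\); note that \(h\,v^\gamma\in\ParaHoelderzero{\alpha}\) by Lemma~\ref{lem:Parisi-standard} and the algebra property.

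For the second variation, differentiating the identity twice, in directions \(h_1\) and \(h_2\), gives
\[
D^2\Residual(\gamma,u_\gamma)\bigl[(h_1,V_{h_1}^\gamma),(h_2,V_{h_2}^\gamma)\bigr]+D_u\Residual(\gamma,u_\gamma)[W_{h_1,h_2}^\gamma]=0.
\]
Evaluating \eqref{eq:DN2-formula} at \(u=u_\gamma\), where \(\mathfrak L_s^{u_\gamma}=L_s^\gamma\), yields
\[
-h_1L_s^\gamma V_{h_2}^\gamma-h_2L_s^\gamma V_{h_1}^\gamma-\gamma s\,\partial_xV_{h_1}^\gamma\partial_xV_{h_2}^\gamma.
\]
Moving these terms to the right side, and using \(D_u\Residual(\gamma,u_\gamma)=-\paraop_s\), gives \eqref{eq:second-linearized-parisi}. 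Uniqueness again follows from the isomorphism, once the source term is checked to lie in \(\ParaHoelderzero{\alpha}\). This holds by Lemma~\ref{lem:X-to-Y-derivatives} applied to \(V_{h_i}^\gamma\in\mathcal X_1\) and the algebra property.

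Linearity of \(h\mapsto V_h^\gamma\) holds because \(D\bm\Psi(\gamma)\) is a linear map. Symmetric bilinearity of \(W\) follows from the symmetry of the second Fréchet derivative of a \(C^2\) map. The equation itself also shows the symmetry directly, since its source term is symmetric in \((h_1,h_2)\).

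The only delicate point is bookkeeping. One must confirm that the full second derivative of the composition \(\gamma\mapsto\Residual(\gamma,\bm\Psi(\gamma))\) gives exactly the terms above. This works because \(\Residual\) is affine in \(\gamma\), so there is no \(D_{\gamma\gamma}\Residual\) term. The mixed and \(uu\) terms are packaged into \(D^2\Residual\) evaluated on the pairs \((h_i,V_{h_i}^\gamma)\), and the remaining term is \(D_u\Residual\) applied to \(W\).
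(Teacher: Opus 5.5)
Your proposal is correct and matches the paper's proof. You differentiate \(\Residual(\gamma,\bm\Psi(\gamma))=0\) once and twice by the chain rule. You read off both equations from Proposition~\ref{prop:solution-map-C2}(3), using \(\mathfrak A(u_\gamma)=v^\gamma\) and \(\mathfrak L_s^{u_\gamma}=L_s^\gamma\). The terminal conditions come from \(\bm\Psi(\gamma)\in\mathcal X_1\), and uniqueness comes from Proposition~\ref{prop:linearized-isomorphism}.

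Your extra checks (source terms lying in \(\ParaHoelderzero{\alpha}\), symmetry read off from the equation) are harmless. Strictly, uniqueness in \(\mathcal X_1\) only needs injectivity of \(-\paraop_s\) on \(\mathcal X_1\).
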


\begin{proof}
Set \(u:=\bm\Psi(\gamma)=\Psi^\gamma-g\). Since
\[
  \Residual(\gamma,\bm\Psi(\gamma))=0,
  \qquad \gamma\in\Opendenseset_\alpha,
\]
and since both \(\Residual\) and \(\bm\Psi\) are \(C^2\), we may differentiate
this identity in \(\gamma\).

Differentiating once in the direction \(h\) gives
\[
  0=
  D_\gamma\Residual(\gamma,u)[h]
  +
  D_u\Residual(\gamma,u)[V_h^\gamma].
\]
Using Proposition~\ref{prop:solution-map-C2}(3), together with
\[
  \mathfrak A(u)=v^\gamma,
  \qquad
  \mathfrak L_s^u=L_s^\gamma,
\]
gives \eqref{eq:linearized-parisi}. Since
\(\bm\Psi(\gamma)(1,\cdot)=0\) for all \(\gamma\in\Opendenseset_\alpha\), one has
\(V_h^\gamma(1,\cdot)=0\). Uniqueness follows from
Proposition~\ref{prop:linearized-isomorphism}.

Differentiating the identity
\[
  \Residual(\gamma,\bm\Psi(\gamma))=0
\]
twice in directions \(h_1,h_2\) gives
\[
  0=
  D_u\Residual(\gamma,u)[W_{h_1,h_2}^\gamma]
  +
  D^2\Residual(\gamma,u)
  [
    (h_1,V_{h_1}^\gamma),
    (h_2,V_{h_2}^\gamma)
  ].
\]
Using again Proposition~\ref{prop:solution-map-C2}(3) and
\(\mathfrak L_s^u=L_s^\gamma\) gives
\eqref{eq:second-linearized-parisi}. The terminal condition follows from
\(\bm\Psi(\gamma)(1,\cdot)=0\), and uniqueness again follows from
Proposition~\ref{prop:linearized-isomorphism}. Linearity and symmetric
bilinearity follow from the corresponding properties of
\(D\bm\Psi(\gamma)\) and \(D^2\bm\Psi(\gamma)\).
\end{proof}

\section{Min-kernel representation of \texorpdfstring{$D^2\Psi^\gamma$}{Hessian}}
\label{sec:propagation}

The goal of this section is to rewrite \(D^2\Psi^\gamma\) in a form suitable to study its sign.
In view of Corollary~\ref{cor:variation-equations} and the standard Feynman--Kac
formula, the right framework is the study of the transition operator for diffusion processes.
While this topic is standard, 
we recall the relevant facts in Section~\ref{sec:tools-kernel}. 
The derivation of the representation is provided in
Section~\ref{sec:min-kernel-proof}. Finally, we provide the implications of that representation in 
Section~\ref{sec:cor-min-kernel}.

\subsection{Tools for the kernel representation}
\label{sec:tools-kernel}

This section collects the tools for the computation to derive the min-kernel representation
of \(D^2\Psi^\gamma\) in Section~\ref{sec:min-kernel-proof}.

\subsubsection{Standard properties of the transition operators}
\label{subsec:transition}

We first record the standard transition-operator facts used in the derivation of the
min-kernel representation. Recall from \eqref{eq:def-Lgamma} and \eqref{eq:def-Dgamma} that
for all $(s,x)\in [0,1]\times \R$,
\begin{align}
L_s^\gamma u(s,x)
&=
\frac12 \partial_{xx}u(s,x)
+
s\,\partial_x\Psi^\gamma(s,x)\partial_x u(s,x), \\
\paraop_s u(s,x)
    &=
    \partial_s u(s,x)
    +
    \gamma(s)L_s^\gamma u(s,x).
\end{align}
Fix \(0\le s< \sigma\le1\). 
The family of operators $\bm{\mathcal{L}}_{s,\sigma}^\gamma = (\gamma(\rho)L_\rho^\gamma)_{\rho\in [s,\sigma]}$
has the associated diffusion process \(\bm{X}^{\gamma,s,\sigma,x}=(X_\rho^{\gamma,s,\sigma,x})_{\rho\in [s,\sigma]}\) defined by the following SDE
\begin{equation}
\label{eq:X-sde}
\begin{cases}
\dd X_\rho^{\gamma,s,\sigma,x}
=
\gamma(\rho)\rho\,
\partial_x\Psi^\gamma(\rho,X_\rho^{\gamma,s,\sigma,x})\,\dd \rho
+
\sqrt{\gamma(\rho)}\,\dd B_\rho,
& \rho\in[s,\sigma],
\\[0.5ex]
X_s^{\gamma,s,\sigma,x}=x,
& x\in\R.
\end{cases}
\end{equation}
By Lemma~\ref{lem:Parisi-standard}, the drift of this diffusion process
is bounded and globally Lipschitz in $x$, uniform in $s$ and $\sigma$. Moreover,
the diffusion process has the associated transition operator family
$(P_{r,\rho}^\gamma)_{s\leq r\leq \rho\leq\sigma}$ defined by
\begin{equation}
\label{eq:def-P}
P_{r,\rho}^\gamma f(x)
\coloneqq
\E\bigl[f\bigl(X_\rho^{\gamma,r,\rho,x}\bigr)\bigr].
\end{equation}
From the definition \eqref{eq:def-P}, we immediately see the following 
properties of $(P_{r,\rho}^\gamma)_{s\leq r\leq \rho\leq\sigma}$.
\begin{property*}
Let \(0\le s\le \sigma\le1\). The family of transition operators
\((P_{r,\rho}^\gamma)_{s\le r\le \rho\le\sigma}\) satisfies the following properties.
\begin{enumerate}
    \item \label{item:P-mass} \(P_{r,\rho}^\gamma 1=1\) for every \(s\le r\le\rho\le\sigma\).
    \item \label{item:P-semigroup} The transition operator family admits the semigroup property.
    That is, for \(s\le \rho_1\le\rho_2\le\rho_3\le\sigma\),
    $
        P_{\rho_1,\rho_2}^\gamma P_{\rho_2,\rho_3}^\gamma
        =
        P_{\rho_1,\rho_3}^\gamma.
    $
\end{enumerate}
Moreover, for every bounded measurable function \(f:\R\to\R\),
\begin{enumerate}
    \setcounter{enumi}{2}
    \item \label{item:P-identity} \(P_{r,r}^\gamma f=f\) for every \(r\in[s,\sigma]\).
    \item \label{item:P-even} If \(f\) is even, then \(P_{r,\rho}^\gamma f\) is even for every
    \(s\le r\le\rho\le\sigma\).
    \item \label{item:P-positive} If \(f\ge0\), then \(P_{r,\rho}^\gamma f\ge0\) for every
    \(s\le r\le\rho\le\sigma\).
\end{enumerate}
\end{property*}

\begin{proof}
Property~\eqref{item:P-mass} and Property~\eqref{item:P-positive} follow directly from the definition.
Property~\eqref{item:P-identity} follows from
\(X_r^{\gamma,r,r,x}=x\).

Property~\eqref{item:P-semigroup} follows from the Markov property of the diffusion.
Indeed, for
\(s\le \rho_1\le\rho_2\le\rho_3\le\sigma\),
\begin{align}
P_{\rho_1,\rho_3}^\gamma f(x)
&=
\E\left[f(X_{\rho_3}^{\gamma,\rho_1,\rho_3,x})\right]
\nonumber
\\
&=
\E\left[
    \E\left[
        f(X_{\rho_3}^{\gamma,\rho_1,\rho_3,x})
        \,\middle|\,
        X_{\rho_2}^{\gamma,\rho_1,\rho_3,x}
    \right]
\right]
=
\E\left[
    P_{\rho_2,\rho_3}^\gamma f
    (X_{\rho_2}^{\gamma,\rho_1,\rho_2,x})
\right]
=
P_{\rho_1,\rho_2}^\gamma
\bigl[P_{\rho_2,\rho_3}^\gamma f\bigr](x).
\nonumber 
\end{align}

It remains to show Property~\eqref{item:P-even}.
By Lemma~\ref{lem:Parisi-standard}(2), 
the drift term $\gamma(\tau)\tau\,\partial_x\Psi^\gamma(\tau,x)$ is odd
for all $\tau\in [r,\rho]$.
Hence,
\((-X_\tau^{\gamma,r,\rho,x})_{\tau\in[r,\rho]}\)
has the same law as
\((X_\tau^{\gamma,r,\rho,-x})_{\tau\in[r,\rho]}\). Therefore, if \(f\) is even,
\[
P_{r,\rho}^\gamma f(-x)
=
\E f(X_\rho^{\gamma,r,\rho,-x})
=
\E f(-X_\rho^{\gamma,r,\rho,x})
=
\E f(X_\rho^{\gamma,r,\rho,x})
=
P_{r,\rho}^\gamma f(x).
\qedhere
\]
\end{proof}

In addition to these elementary properties of the transition family, we will use
the following standard facts without further comment.\begin{enumerate}
\item
Given terminal condition \(\varphi\in C_b^2(\mathbb R)\) and source
\(b\in C([s,\sigma];C_b^2(\mathbb R))\), the backward Cauchy problem
\begin{equation}
\begin{cases}
-\partial_\rho U(\rho,x)=\gamma(\rho)L_\rho^\gamma U(\rho,x)+b(\rho,x),
    &(\rho,x)\in[s,\sigma)\times\mathbb R,
    \\[1ex]
U(\sigma,x)=\varphi(x),
    &x\in\R
\end{cases}
\label{eq:backwardCauchy}
\end{equation}
has a unique bounded classical solution given by
the Feynman--Kac formula 
(cf. \S 5.7, Theorem 7.6 of Karatzas~and~Shreve~\cite{KaratzasShreve1991})
\begin{equation}
U(\rho,x)
=
P_{\rho,\sigma}^\gamma\varphi(x)
+
\int_\rho^\sigma P_{\rho,r}^\gamma[b(r,\cdot)](x)\dd r.
\label{eq:FK} 
\end{equation}
In particular
if \(U(\rho,x)=P_{\rho,\sigma}^\gamma\varphi(x)\), then \(\paraop_\rho u=0\) on \([s,\sigma)\times\mathbb R\).
\item
If \(f\in C_b^{1,2}([s,\sigma]\times\mathbb R)\), then for
\(s\le \rho<r\le \sigma\), we have the terminal time differentiation formula
\begin{equation}
\partial_r P_{\rho,r}^\gamma[f(r,\cdot)](x)
=
P_{\rho,r}^\gamma[
\paraop_r f(r,\cdot)](x).
\label{eq:TD} 
\end{equation}
This can be derived from applying Dynkin's formula to $(r,X_r^{\gamma,s,\sigma,x})_{r\in [s,\sigma]}$,
whose generator is $(\paraop_r)_{r\in [s,\sigma]}$.
\end{enumerate}

\subsubsection{Properties of auxiliary functions}

We now introduce two auxiliary functions that will enter the kernel representation:
\begin{equation}
v^\gamma(s,x)
=
\partial_{xx}\Psi^\gamma(s,x)
+
s\bigl(\partial_x\Psi^\gamma(s,x)\bigr)^2,
\label{eq:def-vgamma-new}
\end{equation}
and, for \(0\le \rho\le \sigma\le 1\),
\begin{equation}
w_{\rho,\sigma}^\gamma(x)
=
P_{\rho,\sigma}^\gamma
\bigl[
(\partial_x\Psi^\gamma(\sigma,\cdot))^2
\bigr](x).
\label{eq:def-wgamma-new}
\end{equation}
Now, we record a few identities for the two auxiliary functions 
that will be convenient for the computation.

The first one is a differentiation identity for $v^\gamma$.
\begin{lemma}
\label{lem:Dv-rule}
For all $(s,x)\in [0,1]\times \R$,
\begin{equation}
\paraop_s
v^\gamma(s,x)
=
(\partial_x\Psi^\gamma)^2.
\label{eq:v-gamma}
\end{equation}
Therefore, for all $0\leq \rho\leq r\leq 1$,
\begin{equation}
P_{\rho,r}^\gamma[v^\gamma(r,\cdot)](x)
=
1-\int_r^1 w_{\rho,\sigma}^\gamma(x)\,d\sigma.
\label{eq:Pv-through-w-new}
\end{equation}
\end{lemma}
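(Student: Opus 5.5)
The plan is to first verify the identity \eqref{eq:v-gamma} by a direct computation, differentiating the Parisi PDE \eqref{eq:gamma-pde} in \(x\). Then I obtain \eqref{eq:Pv-through-w-new} from the Feynman--Kac formula \eqref{eq:FK} combined with the semigroup property of \((P^\gamma_{r,\rho})\).

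\emph{Step 1: the identity \eqref{eq:v-gamma}.} Write \(u=\partial_x\Psi^\gamma\) and \(w=\partial_{xx}\Psi^\gamma\); both are smooth enough in \(x\) with bounded time derivatives by Lemma~\ref{lem:Parisi-standard}(1).

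\begin{enumerate}
\item Differentiating \eqref{eq:gamma-pde} once in \(x\) gives
\[
-\partial_s u=\tfrac12\gamma(s)\bigl(\partial_{xx}u+2s\,u\,\partial_x u\bigr)=\gamma(s)L^\gamma_s u,
\qquad\text{i.e. } \paraop_s u=0 .
\]
\item Differentiating once more, and using \(\partial_x(s u\,\partial_x u)=s w^2+s u\,\partial_x w\), gives
\[
\paraop_s w=-\gamma(s)\,s\,w^2 .
\]
\item For \(u^2\), the product rule gives \(\partial_s(u^2)=2u\,\partial_s u\), and
\[
L^\gamma_s(u^2)=2u\,L^\gamma_s u+(\partial_x u)^2 .
\]
Since \(\partial_x u=w\) and \(\paraop_s u=0\), this yields \(\paraop_s(u^2)=\gamma(s)w^2\).
\item Finally, since \(v^\gamma=w+s\,u^2\),
\[
\paraop_s v^\gamma=\paraop_s w+u^2+s\,\paraop_s(u^2)=-\gamma s w^2+u^2+\gamma s w^2=u^2 .
\]
\end{enumerate}
The only care needed here is that these are classical derivatives. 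This follows because \(\Psi^\gamma\) has all spatial derivatives bounded and continuous and \(\partial_s\partial_x^j\Psi^\gamma\) is given explicitly by the differentiated equation.

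\emph{Step 2: the representation \eqref{eq:Pv-through-w-new}.} At \(s=1\),
\[
v^\gamma(1,x)=\sech^2x+\tanh^2x=1 .
\]
By Step 1, \(v^\gamma\) solves the backward Cauchy problem \eqref{eq:backwardCauchy} on \([r,1]\) with terminal datum \(\varphi\equiv1\) and source \(b=-(\partial_x\Psi^\gamma)^2\). This source lies in \(C([r,1];C_b^2(\R))\) by Lemma~\ref{lem:Parisi-standard}(1). Since the bounded classical solution is unique, the Feynman--Kac formula \eqref{eq:FK} together with \(P^\gamma_{r,1}1=1\) gives
\[
v^\gamma(r,x)=1-\int_r^1 P^\gamma_{r,\sigma}\bigl[(\partial_x\Psi^\gamma(\sigma,\cdot))^2\bigr](x)\,\dd\sigma .
\]
Next I apply \(P^\gamma_{\rho,r}\) to both sides. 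Fubini's theorem lets me exchange the operator with the \(\sigma\)-integral, since the integrand is bounded by \(1\) because \(|\partial_x\Psi^\gamma|\le1\). The semigroup property \(P^\gamma_{\rho,r}P^\gamma_{r,\sigma}=P^\gamma_{\rho,\sigma}\) and mass preservation then turn the right-hand side into
\[
1-\int_r^1 w^\gamma_{\rho,\sigma}(x)\,\dd\sigma .
\]

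\emph{Main obstacle.} The main point requiring attention is the regularity bookkeeping: confirming that \(v^\gamma\) is a bounded classical solution, so that the Feynman--Kac uniqueness applies. This reduces to the bounds in Lemma~\ref{lem:Parisi-standard}(1).

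An alternative route would apply the terminal-time differentiation formula \eqref{eq:TD} to \(\sigma\mapsto P^\gamma_{\rho,\sigma}[v^\gamma(\sigma,\cdot)]\) and integrate from \(r\) to \(1\). However, that route requires \(v^\gamma\in C_b^{1,2}\), which needs mixed derivatives such as \(\partial_s\partial_{xx}v^\gamma\). The Feynman--Kac route only requires the source to lie in \(C([r,1];C_b^2)\), so I prefer it.
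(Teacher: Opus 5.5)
Your proposal is correct and follows the paper's argument. Step 1 is the same computation, which the paper organizes through $\paraop_s\partial_x\Psi^\gamma=0$ and the product and derivative rules for $\paraop_s$. Step 2 is the same Feynman--Kac argument with one small variation: the paper applies Feynman--Kac on $[\rho,r]$ with terminal datum $v^\gamma(r,\cdot)$ and on $[\rho,1]$ with terminal datum $1$, then subtracts. You instead apply it once on $[r,1]$ and push forward with $P^\gamma_{\rho,r}$ using the semigroup property and Fubini, which is equally valid under the regularity supplied by Lemma~\ref{lem:Parisi-standard}.
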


\begin{proof}
The proof of \eqref{eq:v-gamma} is computational, using the following two sublemmas.
We first record a differentiation identity that follows directly from
\eqref{eq:gamma-pde}.
\begin{sublemma}
    \label{sublem:DsPsi}
We have
\[\paraop_s\partial_x\Psi^\gamma=0.\]
\end{sublemma}
\begin{proof}[Proof of Sublemma~\ref{sublem:DsPsi}]
Differentiating the time-changed Parisi PDE \eqref{eq:gamma-pde}
with respect to $x$ gives the identity.    
\end{proof}
Next, we record the differentiation rules for \(\paraop_s\).
\begin{sublemma}
\label{lem:Ds-rule}
For all $u$ and $w$ with sufficient differentiabilities,
\begin{align}
\paraop_s(uw)
&=
u\,\paraop_sw+w\,\paraop_su
+\gamma(s)\partial_xu\,\partial_xw,
\label{eq:Ds-product-rule}
\\
\paraop_s(\partial_xu)
&=
\partial_x(\paraop_su)
-
\gamma(s)s\,\partial_{xx}\Psi^\gamma(s,\cdot)\,\partial_xu,
\label{eq:Ds-first-derivative-rule}
\\
\paraop_s(\partial_{xx}u)
&=
\partial_{xx}(\paraop_su)
-2\gamma(s)s\,\partial_{xx}\Psi^\gamma(s,\cdot)\,\partial_{xx}u
-
\gamma(s)s\,\partial_{xxx}\Psi^\gamma(s,\cdot)\,\partial_xu,
\label{eq:Ds-second-derivative-rule}
\end{align}
\end{sublemma}
\begin{proof}[Proof of Sublemma~\ref{lem:Ds-rule}]
By differentiation.
\end{proof}
We now assert \eqref{eq:v-gamma}.
By Sublemma~\ref{sublem:DsPsi} and \eqref{eq:Ds-second-derivative-rule} from Sublemma~\ref{lem:Ds-rule},
\[
\mathscr D_s\partial_{xx}\Psi^\gamma
=-\gamma(s)s\bigl(\partial_{xx}\Psi^\gamma\bigr)^2.
\]
Recall from \eqref{eq:def-vgamma-new}
that $v^\gamma
=
\partial_{xx}\Psi^\gamma
+s\bigl(\partial_x\Psi^\gamma\bigr)^2$,
so
\eqref{eq:Ds-product-rule} from Sublemma~\ref{lem:Ds-rule} gives
\[
\mathscr D_s\bigl(s\bigl(\partial_x\Psi^\gamma\bigr)^2\bigr)
=
\bigl(\partial_x\Psi^\gamma\bigr)^2
+s\,\mathscr D_s\bigl(\bigl(\partial_x\Psi^\gamma\bigr)^2\bigr)
=
\bigl(\partial_x\Psi^\gamma\bigr)^2
+\gamma(s)s\bigl(\partial_{xx}\Psi^\gamma\bigr)^2.
\]
Therefore
\[
\mathscr D_s\partial_{xx}\Psi^\gamma
+\mathscr D_s\bigl(s\bigl(\partial_x\Psi^\gamma\bigr)^2\bigr)
=
-\gamma(s)s\bigl(\partial_{xx}\Psi^\gamma\bigr)^2
+\bigl(\partial_x\Psi^\gamma\bigr)^2
+\gamma(s)s\bigl(\partial_{xx}\Psi^\gamma\bigr)^2
=
\bigl(\partial_x\Psi^\gamma\bigr)^2.
\]

It remains to prove \eqref{eq:Pv-through-w-new}. Fix
\(0\leq \rho\leq r\leq 1\). 
By \eqref{eq:v-gamma}, the function $v^\gamma$
solves
\[
-\partial_\tau U(\tau,x)
=
\gamma(\tau)L_\tau^\gamma U(\tau,x)
-
\bigl(\partial_x\Psi^\gamma(\tau,x)\bigr)^2,
\qquad
u(r,x)=v^\gamma(r,x).
\]
on $(\tau,x)\in [\rho,r]\times \R$.
Then, the Feynman--Kac formula and the definition
 of \(w_{\rho,\sigma}^\gamma\)
give
\begin{equation}
v^\gamma(\rho,x)
=
P_{\rho,r}^\gamma[v^\gamma(r,\cdot)](x)
-
\int_\rho^r w_{\rho,\sigma}^\gamma(x)
\dd\sigma.
\label{eq:v-FK.1}
\end{equation}
Taking \(r=1\) in \eqref{eq:v-FK.1} and using
\[
v^\gamma(1,x)
=
\partial_{xx}\log\cosh x
+
(\partial_x\log\cosh x)^2
=
1
\]
and Property~\eqref{item:P-mass},
we obtain
\[
v^\gamma(\rho,x)
=
P_{\rho,1}^\gamma[v^\gamma(1,\cdot)](x)
-
\int_\rho^1 w_{\rho,\sigma}^\gamma(x)
\dd\sigma
=
1
-
\int_\rho^1 w_{\rho,\sigma}^\gamma(x)\,d\sigma .
\]
Substituting this into \eqref{eq:v-FK.1}
and rearranging the expressions prove
\eqref{eq:Pv-through-w-new}.
\end{proof}

Lemma~\ref{lem:Dv-rule} implies the following corollary recording 
the derivative identities for $P_{\rho,r}^\gamma[v^\gamma(r,\cdot)]$.
\begin{corollary}
\label{cor:Pv-through-w}
For every \(0\le \rho\le r\le 1\) and \(x\in\mathbb R\),
\begin{align}
\partial_r P_{\rho,r}^\gamma[v^\gamma(r,\cdot)](x)
&=
w_{\rho,r}^\gamma(x),
\label{eq:dPv-through-w-new}
\\
L_\rho^\gamma P_{\rho,r}^\gamma[v^\gamma(r,\cdot)](x)
&=
-\int_r^1 L_\rho^\gamma w_{\rho,\sigma}^\gamma(x)\,d\sigma,
\label{eq:LV-new}
\\
\partial_xP_{\rho,r}^\gamma[v^\gamma(r,\cdot)](x)
&=
-\int_r^1 \partial_xw_{\rho,\sigma}^\gamma(x)\,d\sigma.
\label{eq:dxV-new}
\end{align}
\end{corollary}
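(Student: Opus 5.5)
The plan is to derive all three identities from \eqref{eq:Pv-through-w-new} in Lemma~\ref{lem:Dv-rule},
\[
P_{\rho,r}^\gamma[v^\gamma(r,\cdot)](x)=1-\int_r^1 w_{\rho,\sigma}^\gamma(x)\,\dd\sigma ,
\]
by differentiating in $r$ and $x$ and commuting derivatives with the $\sigma$-integral. The left-hand side depends on $r$ only through the lower integration limit on the right. Hence \eqref{eq:dPv-through-w-new} follows from the fundamental theorem of calculus, once $\sigma\mapsto w_{\rho,\sigma}^\gamma(x)$ is known to be continuous on $[\rho,1]$.

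For continuity, I note that $(\partial_x\Psi^\gamma(\sigma,\cdot))^2$ is bounded by $1$ (Lemma~\ref{lem:Parisi-standard}(3)) and jointly continuous in $(\sigma,x)$. The process $X^{\gamma,\rho,\sigma,x}_\sigma$ is continuous in $\sigma$ almost surely, so dominated convergence gives continuity. As a cross-check, I would also verify the identity directly with the terminal-time differentiation formula \eqref{eq:TD} and Lemma~\ref{lem:Dv-rule}: $\partial_r P_{\rho,r}^\gamma[v^\gamma(r,\cdot)]=P_{\rho,r}^\gamma[\paraop_r v^\gamma(r,\cdot)]=P_{\rho,r}^\gamma[(\partial_x\Psi^\gamma(r,\cdot))^2]=w_{\rho,r}^\gamma$. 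This route requires $v^\gamma\in C_b^{1,2}$, which holds by Lemma~\ref{lem:Parisi-standard}(1), since $v^\gamma$ involves only spatial derivatives of order at least one.

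For \eqref{eq:dxV-new} and \eqref{eq:LV-new}, I apply $\partial_x$ and $L_\rho^\gamma$ in $x$ to both sides; the constant $1$ is annihilated by both operators. The only real step, and the one I expect to be the main obstacle, is justifying that $\partial_x$ and $\partial_{xx}$ pass under $\int_r^1\cdot\,\dd\sigma$. This needs bounds on $\partial_x w_{\rho,\sigma}^\gamma$ and $\partial_{xx} w_{\rho,\sigma}^\gamma$ that are uniform in $\sigma\in[\rho,1]$ and $x$, together with continuity in $(\sigma,x)$.

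To obtain these bounds, I view $w_{\cdot,\sigma}^\gamma$ as the solution of the backward Cauchy problem \eqref{eq:backwardCauchy} on $[0,\sigma]$ with terminal datum $\varphi_\sigma=(\partial_x\Psi^\gamma(\sigma,\cdot))^2$ and no source. The data $\varphi_\sigma$ are bounded in $C_b^{2+\alpha}$ uniformly in $\sigma$, because all spatial derivatives of $\Psi^\gamma$ of order at least one are bounded and Lipschitz (Lemma~\ref{lem:Parisi-standard}(1)). Applying Theorem~\ref{thm:terminal-time-schauder}, after rescaling the time interval $[0,\sigma]$ or extending the coefficients, gives a $C_b^{1+\alpha/2,2+\alpha}$ bound on $w_{\cdot,\sigma}^\gamma$ that is uniform in $\sigma$. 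The edge case $\sigma$ near $\rho$ is harmless, since the estimate does not deteriorate as the interval length shrinks. With these uniform bounds, dominated convergence (or Leibniz's rule) lets the derivatives pass under the integral, giving \eqref{eq:dxV-new} and \eqref{eq:LV-new}. The last identity uses the linearity of $L_\rho^\gamma$ in the $x$-variable with the fixed coefficient $\rho\,\partial_x\Psi^\gamma(\rho,x)$.
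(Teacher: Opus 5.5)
Your proposal is correct and follows the paper's route. The paper proves the corollary by differentiating the identity $P_{\rho,r}^\gamma[v^\gamma(r,\cdot)]=1-\int_r^1 w_{\rho,\sigma}^\gamma\,d\sigma$ from Lemma~\ref{lem:Dv-rule} in $r$ and in $x$, calling the results ``direct consequences.'' Your continuity argument and uniform-in-$\sigma$ Schauder bounds only supply what the paper leaves implicit: the fundamental theorem of calculus step, and passing $\partial_x,\partial_{xx}$ under the $\sigma$-integral.

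One minor caution: in that Schauder step, use the coefficient-extension version rather than time-rescaling. Rescaling $[0,\sigma]$ to unit length multiplies the ellipticity constant by $\sigma$, so the theorem as stated would not give bounds uniform as $\sigma\downarrow 0$ when $\rho=r=0$.
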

\begin{proof}
These are direct consequences of \eqref{eq:Pv-through-w-new}
from Lemma~\ref{lem:Dv-rule}.
\end{proof}

Finally, we record the identity for
\(\paraop_\rho(L_\rho^\gamma w_{\rho,\sigma}^\gamma)\)
that will be needed later.

\begin{corollary}
\label{cor:D-Lw}
For \(0\le \rho<\sigma\le 1\),
\[
\paraop_\rho\bigl(L_\rho^\gamma w_{\rho,\sigma}^\gamma\bigr)
=
\partial_x\Psi^\gamma(\rho,\cdot)\,\partial_xw_{\rho,\sigma}^\gamma
-
\frac12\gamma(\rho)\rho\,\partial_xv^\gamma(\rho,\cdot)\,
\partial_xw_{\rho,\sigma}^\gamma.
\label{eq:D-Lw-new}
\]
\end{corollary}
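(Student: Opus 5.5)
The plan is to combine the backward Kolmogorov equation satisfied by \(w_{\rho,\sigma}^\gamma\) in the variable \(\rho\) with the commutation rules of Sublemma~\ref{lem:Ds-rule}. Fix \(\sigma\in(0,1]\) and regard \(w(\rho,x):=w_{\rho,\sigma}^\gamma(x)\) as a function on \([0,\sigma]\times\R\). By \eqref{eq:FK}, with terminal datum \(\varphi=(\partial_x\Psi^\gamma(\sigma,\cdot))^2\) and no source, \(w\) is the bounded classical solution of the backward Cauchy problem. Hence
\[
\paraop_\rho w=0 \qquad \text{on } [0,\sigma)\times\R.
\]
The datum \(\varphi\) lies in \(C_b^\infty\) by Lemma~\ref{lem:Parisi-standard}(1).

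The second step is to write \(L_\rho^\gamma w=\tfrac12\partial_{xx}w+\rho\,\partial_x\Psi^\gamma\,\partial_xw\) and apply \(\paraop_\rho\) to each piece.
\begin{itemize}
\item For the second-order piece, \eqref{eq:Ds-second-derivative-rule} together with \(\paraop_\rho w=0\) gives
\[
\paraop_\rho(\partial_{xx}w)=-2\gamma\rho\,\partial_{xx}\Psi^\gamma\,\partial_{xx}w-\gamma\rho\,\partial_{xxx}\Psi^\gamma\,\partial_xw.
\]
\item For the first-order piece, the explicit factor \(\rho\) contributes \(\partial_x\Psi^\gamma\,\partial_xw\) through \(\partial_\rho\). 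The remaining factor \(\partial_x\Psi^\gamma\,\partial_xw\) is handled with the product rule \eqref{eq:Ds-product-rule}, using \(\paraop_\rho\partial_x\Psi^\gamma=0\) from Sublemma~\ref{sublem:DsPsi} and
\[
\paraop_\rho\partial_xw=-\gamma\rho\,\partial_{xx}\Psi^\gamma\,\partial_xw,
\]
which follows from \eqref{eq:Ds-first-derivative-rule}. This yields
\[
\paraop_\rho\bigl(\rho\,\partial_x\Psi^\gamma\,\partial_xw\bigr)=\partial_x\Psi^\gamma\,\partial_xw+\rho\bigl(-\gamma\rho\,\partial_x\Psi^\gamma\,\partial_{xx}\Psi^\gamma\,\partial_xw+\gamma\,\partial_{xx}\Psi^\gamma\,\partial_{xx}w\bigr).
\]
\end{itemize}

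Adding \(\tfrac12\) of the first display to the second, the \(\partial_{xx}w\) terms cancel. What remains is
\[
\partial_x\Psi^\gamma\,\partial_xw-\tfrac12\gamma\rho\,\partial_xw\bigl(\partial_{xxx}\Psi^\gamma+2\rho\,\partial_x\Psi^\gamma\,\partial_{xx}\Psi^\gamma\bigr).
\]
Differentiating \eqref{eq:def-vgamma-new} gives \(\partial_xv^\gamma=\partial_{xxx}\Psi^\gamma+2\rho\,\partial_x\Psi^\gamma\,\partial_{xx}\Psi^\gamma\), and substituting this gives the claimed identity.

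The main obstacle is justifying these manipulations, since they require \(\partial_\rho\partial_x^jw\) to exist and \(\partial_x^jw\) to be bounded for \(j\le 3\). My approach is to argue as in Lemma~\ref{lem:Parisi-standard}(1). The coefficients \(\gamma(\rho)\rho\,\partial_x\Psi^\gamma\) have all spatial derivatives bounded, and \(\varphi\in C_b^\infty\). Differentiating the equation in \(x\) gives a closed system for \(\partial_x^jw\) with bounded lower-order coefficients, to which Theorem~\ref{thm:terminal-time-schauder} and Theorem~\ref{thm:comparison} apply; this gives bounded classical derivatives up to order four. An alternative is to mollify \(\gamma\) in time, prove the identity for the smooth problem, and pass to the limit using the Schauder bounds. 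Once this regularity is in place, the computation is purely algebraic.
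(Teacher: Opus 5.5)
Your proposal is correct and matches the paper's proof: you use $\paraop_\rho w_{\rho,\sigma}^\gamma=0$ from Feynman--Kac, then the second-derivative commutation rule for $\tfrac12\partial_{xx}w$ and the product and first-derivative rules with $\paraop_\rho\partial_x\Psi^\gamma=0$ for the drift piece, so that the $\partial_{xx}\Psi^\gamma\,\partial_{xx}w$ terms cancel and $\partial_xv^\gamma=\partial_{xxx}\Psi^\gamma+2\rho\,\partial_x\Psi^\gamma\,\partial_{xx}\Psi^\gamma$ finishes the computation. The only differences are cosmetic: you split off the factor $\rho$ before the product rule and substitute for $\partial_{xxx}\Psi^\gamma$ at the end rather than the start, and your regularity sketch via the differentiated equations for $\partial_x^j w$ justifies the manipulations that the paper only covers by ``sufficient differentiabilities.''
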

\begin{proof}
Let us start with recording two useful identities
for the proof.

Since \(w_{\rho,\sigma}^\gamma=
P_{\rho,\sigma}^\gamma[
(\partial_x\Psi^\gamma(\sigma,\cdot))^2]\), the Feynman--Kac formula gives
\begin{equation}
\paraop_\rho w_{\rho,\sigma}^\gamma=0.
\label{eq:Drhow}
\end{equation}
Recall from \eqref{eq:def-vgamma-new}
that $v^\gamma
=
\partial_{xx}\Psi^\gamma
+s\bigl(\partial_x\Psi^\gamma\bigr)^2$,
so differentiating and rearrange this expression yield
\begin{equation}
    \partial_{xxx}\Psi^\gamma(\rho,\cdot)
    =
    \partial_x v^\gamma - 2\rho\partial_x\Psi^\gamma(\rho,\cdot)\partial_{xx}\Psi^\gamma(\rho,\cdot).
    \label{eq:dxxxPsi}
\end{equation}

Now, we apply \(\paraop_\rho\) separately to the two terms in the identity
\[
L_\rho^\gamma w_{\rho,\sigma}^\gamma
=
\frac12\partial_{xx}w_{\rho,\sigma}^\gamma
+
\rho\,\partial_x\Psi^\gamma(\rho,\cdot)\partial_xw_{\rho,\sigma}^\gamma .
\]
Applying \eqref{eq:Ds-second-derivative-rule}~from~Sublemma~\ref{lem:Ds-rule}
yields
\begin{align}
    &
    \frac{1}{2}\paraop_\rho(\partial_{xx}w_{\rho,\sigma}^\gamma)
    \nonumber
    \\
    &=
    \frac{1}{2}
    \partial_{xx} (\paraop_\rho w_{\rho,\sigma}^\gamma)
    -
    \gamma(\rho)\rho \partial_{xx}\Psi^\gamma(\rho,\cdot)
    \partial_{xx}w_{\rho,\sigma}^\gamma
    -
    \frac{1}{2}
    \gamma(\rho)\rho \partial_{xxx}\Psi^\gamma(\rho,\cdot)\partial_x w_{\rho,\sigma}^\gamma
    \nonumber 
    \\
    &=
    -
    \gamma(\rho)\rho 
    \partial_{xx}\Psi^\gamma(\rho,\cdot)
    \partial_{xx}w_{\rho,\sigma}^\gamma
    -
    \frac{1}{2}
    \gamma(\rho)\rho 
    \partial_x v^\gamma 
    \partial_x w_{\rho,\sigma}^\gamma
    + \gamma(\rho)
    \rho^2\partial_x\Psi^\gamma(\rho,\cdot)\partial_{xx}\Psi^\gamma(\rho,\cdot)
    \partial_x w_{\rho,\sigma}^\gamma.
    \label{eq:DLw-first term}
\end{align}
Applying \eqref{eq:Ds-product-rule}~from~Sublemma~\ref{lem:Ds-rule}
yields
\begin{multline}
    \paraop_\rho
    \bigl(
    \rho 
    \partial_x\Psi^\gamma(\rho,\cdot)\partial_xw_{\rho,\sigma}^\gamma
    \bigr)
    \\
    =
    \paraop_\rho
    \bigl(
    \rho 
    \partial_x\Psi^\gamma(\rho,\cdot)
    \bigr)\partial_x w_{\rho,\sigma}^\gamma
    +
    \rho 
    \partial_x\Psi^\gamma(\rho,\cdot) 
    \paraop_\rho
    \bigl(
    \partial_xw_{\rho,\sigma}^\gamma
    \bigr)
    +
    \gamma(\rho)\rho \partial_{xx}\Psi^\gamma(\rho,\cdot)\partial_{xx} w_{\rho,\sigma}^\gamma.
    \label{eq:DLw-second-term.1}
\end{multline}
Applying \eqref{eq:Ds-product-rule}~and~\eqref{eq:Ds-first-derivative-rule}~from~Sublemma~\ref{lem:Ds-rule} yields
\begin{equation*}
    \paraop_\rho
    \bigl(
    \rho 
    \partial_x\Psi^\gamma(\rho,\cdot)
    \bigr)
    =
    \rho \paraop_\rho
    \bigl(
    \partial_x\Psi^\gamma(\rho,\cdot)
    \bigr)
    +
    \partial_x\Psi^\gamma(\rho,\cdot)
    =
    \partial_x\Psi^\gamma(\rho,\cdot),
\end{equation*}
where the second equality follows from Sublemma~\ref{sublem:DsPsi}.
Applying \eqref{eq:Ds-product-rule}~from~Sublemma~\ref{lem:Ds-rule} and \eqref{eq:Drhow}
yield
\begin{equation*}
    \paraop_\rho (\partial_x w_{\rho,\sigma}^\gamma)
    =
    -
    \gamma(\rho)\rho\partial_{xx}\Psi^\gamma(\rho,\cdot)\partial_x w_{\rho,\sigma}^\gamma.
\end{equation*}
Plugging the two identities above back to \eqref{eq:DLw-second-term.1}
yields
\begin{multline}
    \paraop_\rho
    \bigl(
    \rho 
    \partial_x\Psi^\gamma(\rho,\cdot)\partial_xw_{\rho,\sigma}^\gamma
    \bigr)
    \\
    =
    \partial_x\Psi^\gamma(\rho,\cdot)
    \partial_x w_{\rho,\sigma}^\gamma
    -
    \gamma(\rho)
    \rho^2 
    \partial_x\Psi^\gamma(\rho,\cdot) 
    \partial_{xx}\Psi^\gamma(\rho,\cdot)\partial_x w_{\rho,\sigma}^\gamma
    +
    \gamma(\rho)\rho \partial_{xx}\Psi^\gamma(\rho,\cdot)\partial_{xx} w_{\rho,\sigma}^\gamma.
    \label{eq:DLw-second-term.2}
\end{multline}
Summing \eqref{eq:DLw-first term} and \eqref{eq:DLw-second-term.2}, and canceling the common terms, concludes the proof.
\end{proof}

\subsection{Min-kernel representation}
\label{sec:min-kernel-proof}

Since the argument is computational, we first carry out the calculation and then
state Proposition~\ref{prop:ordered-hessian} at the end of the section.

Recall also the abbreviations introduced in Section~\ref{subsec:variation-equations}
of the directional derivatives:
\[
  V_h^\gamma
  =
  D\Psi^\gamma[h],
  \qquad
  W_{h_1,h_2}^\gamma
  =
  D^2\Psi^\gamma[h_1,h_2],
\]
for all \(h,h_1,h_2\in\Hoelder{\alpha/2}([0,1])\).

Corollary~\ref{cor:variation-equations} and the Feynman--Kac formula yields
\begin{equation}
    V_h^\gamma(\rho,x)
    =
    \frac{1}{2}
    \int_\rho^1
    h(\sigma)
    P_{\rho,\sigma}^\gamma
    \bigl[
    v^\gamma(\sigma,\cdot)
    \bigr](x)\dd{\sigma}
    \label{eq:V-FK} 
\end{equation}
and
\begin{multline}
    W_{h_1,h_2}^\gamma(s,x)
    =
    \int_s^1
    P_{s,\rho}^\gamma
    \Bigl[
    h_1(\rho)L_\rho^\gamma 
    V_{h_2}^\gamma(\rho,\,\cdot\,)
    \Bigr](x)\dd \rho
    \\
    +
    \int_s^1
    P_{s,\rho}^\gamma
    \Bigl[
    h_2(\rho)L_\rho^\gamma 
    V_{h_1}^\gamma(\rho,\,\cdot\,)
    \Bigr](x)
    \dd \rho
    +
    \int_s^1
    P_{s,\rho}^\gamma
    \Bigl[
    \gamma(\rho)\rho\,
    \partial_xV_{h_1}^\gamma(\rho,\,\cdot\,)
    \partial_xV_{h_2}^\gamma(\rho,\,\cdot\,)
    \Bigr](x)
    \dd \rho.
    \label{eq:W-FK}
\end{multline}

Using \eqref{eq:V-FK} and the identities from
Corollary~\ref{cor:Pv-through-w}, the integrands in the first two terms of
\eqref{eq:W-FK} are
\begin{align}
    \int_s^1
    P_{s,\rho}^\gamma
    \Bigl[
    h_1(\rho)L_\rho^\gamma 
    V_{h_2}^\gamma (\rho,\,\cdot\,)
    \Bigr](x)
    \dd{\rho}
    &=
    -
    \frac12
    \int_s^1
    \int_{\Delta_{s,\sigma}}
    h_1(\rho)h_2(r)
    P_{s,\rho}^\gamma 
    \bigl[
    L_\rho^\gamma 
    w_{\rho,\sigma}^\gamma
    \bigr]
    (x)
    \dd{r}
    \dd{\rho}
    \dd{\sigma},
    \label{eq:first-term}
    \\
    \int_s^1
    P_{s,\rho}^\gamma
    \Bigl[
    h_2(\rho)L_\rho^\gamma 
    V_{h_1}^\gamma (\rho,\,\cdot\,)
    \Bigr](x)
    \dd{\rho}
    &=
    -
    \frac12
    \int_s^1
    \int_{\Delta_{s,\sigma}}
    h_2(\rho)h_1(r)
    P_{s,\rho}^\gamma 
    \bigl[
    L_\rho^\gamma 
    w_{\rho,\sigma}^\gamma
    \bigr]
    (x)
    \dd{r}
    \dd{\rho}
    \dd{\sigma},
    \label{eq:second-term}
\end{align}
with $\Delta_{s,\sigma}=\bigl\{(\rho,r) \,\big|\, s\leq \rho\leq r\leq \sigma\bigr\}$
for $\sigma\in [s,1]$.
By Fubini and
adopting the change of variables $\rho=\rho_1$ and $r=\rho_2$ for \eqref{eq:first-term}
and the change of variables $\rho=\rho_2$ and $r=\rho_1$ for \eqref{eq:second-term},
the sum of \eqref{eq:first-term} and \eqref{eq:second-term} equals
\begin{equation}
    -
    \frac12
    \int_s^1
    \iint_{[s,\sigma]^2}
    h_1(\rho_1)h_2(\rho_2)
    P_{s,\rho_1\wedge\rho_2}^\gamma 
    \bigl[
    L_{\rho_1\wedge \rho_2}^\gamma 
    w_{\rho_1\wedge \rho_2,\sigma}^\gamma
    \bigr]
    (x)
    \dd{\rho_1}
    \dd{\rho_2}
    \dd{\sigma}.
    \label{eq:first-second-answer}
\end{equation}

We now treat the third term in \eqref{eq:W-FK}.
Using \eqref{eq:V-FK}, the third term in \eqref{eq:W-FK} equals
\begin{multline}
    \int_s^1
    P_{s,\rho}^\gamma
    \Bigl[
    \gamma(\rho)\rho\,
    \partial_xV_{h_1}^\gamma(\rho,\,\cdot\,)
    \partial_xV_{h_2}^\gamma(\rho,\,\cdot\,)
    \Bigr](x)
    \dd{\rho}
    \\
    =
    \frac14
    \int_s^1
    \iint_{[\rho,1]^2}
    h_1(\rho_1)h_2(\rho_2)
    P_{s,\rho}^\gamma
    \Bigl[
    \gamma(\rho)\rho\,
    \partial_x 
    P_{\rho,\rho_1}^\gamma 
    \bigl[
    v^\gamma(\rho_1,\,\cdot\,)
    \bigr]
    \partial_x 
    P_{\rho,\rho_2}^\gamma
    \bigl[
    v^\gamma(\rho_2,\,\cdot\,)
    \bigr]
    \Bigr](x)
    \dd{\rho_1}\dd{\rho_2}
    \dd{\rho}.
    \label{eq:quadratic-term}
\end{multline} 
The domain in \eqref{eq:quadratic-term} admits another parametrization
\begin{equation*}
    \Bigl\{
        (\rho,\rho_1,\rho_2) 
        \,\Big|\, 
        s\leq \rho\leq 1,\, (\rho_1,\rho_2)\in [\rho,1]^2
    \Bigr\}
    =
    \Bigl\{
        (\rho,\rho_1,\rho_2) 
        \,\Big|\, 
        s\leq \rho\leq \rho_1\wedge\rho_2,\, (\rho_1,\rho_2)\in [s,1]^2
    \Bigr\},
\end{equation*}
so Fubini yields
\begin{multline}
    \eqref{eq:quadratic-term}
    =
    \frac14
    \iint_{[s,1]^2}
    h_1(\rho_1)h_2(\rho_2)
    \\
    \int_s^{\rho_1\wedge\rho_2}
    P_{s,\rho}^\gamma
    \Bigl[
    \gamma(\rho)\rho\,
    \partial_x 
    P_{\rho,\rho_1}^\gamma 
    \bigl[
    v^\gamma(\rho_1,\,\cdot\,)
    \bigr]
    \partial_x 
    P_{\rho,\rho_2}^\gamma
    \bigl[
    v^\gamma(\rho_2,\,\cdot\,)
    \bigr]
    \Bigr](x)
    \dd{\rho}
    \dd{\rho_1}\dd{\rho_2}.
    \label{eq:quadratic-term-reparametrization}
\end{multline}
Applying Corollary~\ref{cor:Pv-through-w} with
\(r=\rho_1\vee\rho_2\), the inner integral in
\eqref{eq:quadratic-term-reparametrization} becomes
\begin{multline}
    \int_s^{\rho_1\wedge\rho_2}
    P_{s,\rho}^\gamma
    \Bigl[
    \gamma(\rho)\rho\,
    \partial_x 
    P_{\rho,\rho_1\wedge\rho_2}^\gamma 
    \bigl[
    v^\gamma(\rho_1\wedge\rho_2,\,\cdot\,)
    \bigr]
    \partial_x 
    P_{\rho,\rho_1\vee\rho_2}^\gamma
    \bigl[
    v^\gamma(\rho_1\vee\rho_2,\,\cdot\,)
    \bigr]
    \Bigr](x)
    \dd{\rho}
    \\
    =
    -
    \int_s^{\rho_1\wedge\rho_2}
    \int_{\rho_1\vee\rho_2}^1
    P_{s,\rho}^\gamma
    \Bigl[
    \gamma(\rho)\rho\,
    \partial_x 
    P_{\rho,\rho_1\wedge\rho_2}^\gamma 
    \bigl[
    v^\gamma(\rho_1\wedge\rho_2,\,\cdot\,)
    \bigr]
    \partial_x 
    w_{\rho,\sigma}^\gamma
    \Bigr](x)
    \dd{\sigma}
    \dd{\rho}.
    \label{eq:inner-integral}
\end{multline}
Plug \eqref{eq:inner-integral} back to \eqref{eq:quadratic-term}.
Then, applying Fubini with the reparametrization
\[
    \rho_1,\rho_2\in[s,1],\ \rho_1\vee\rho_2\le\sigma\le1
    \quad\text{if and only if}\quad
    s\le\sigma\le1,\ \rho_1,\rho_2\in[s,\sigma],
\]
yields
\begin{multline}
    \eqref{eq:quadratic-term} 
    =
    -
    \frac14
    \int_s^1
    \iint_{[s,\sigma]^2}
    h_1(\rho_1)h_2(\rho_2)
    \\
    \int_s^{\rho_1\wedge\rho_2}
    P_{s,\rho}^\gamma
    \Bigl[
    \gamma(\rho)\rho\,
    \partial_x 
    P_{\rho,\rho_1\wedge\rho_2}^\gamma 
    \bigl[
    v^\gamma(\rho_1\wedge\rho_2,\,\cdot\,)
    \bigr]
    \partial_x 
    w_{\rho,\sigma}^\gamma
    \Bigr](x)
    \dd{\rho}
    \dd{\rho_1}\dd{\rho_2}
    \dd{\sigma}.
    \label{eq:third-term-answer}
\end{multline}

Combining \eqref{eq:first-second-answer} and \eqref{eq:third-term-answer}
yields the following min-kernel representation,
stated as a proposition.
\begin{proposition}
\label{prop:ordered-hessian}
Let \(h_1,h_2\in \Hoelder{\alpha/2}([0,1])\).
Then, for all $(s,x)\in [0,1]\times \R$,
\begin{equation*}
D^2\Psi^\gamma[h_1,h_2](s,x)
=
-
\int_s^1
\iint_{[s,\sigma]^2}
h_1(\rho_1)h_2(\rho_2)
A^{\gamma,s,\sigma,x}(\rho_1\wedge\rho_2)
\dd\rho_1\dd\rho_2\dd{\sigma},
\end{equation*}
where
\begin{equation}
    \label{eq:min-kernel-FK}
    A^{\gamma,s,\sigma,x}(\rho)
    =
    \frac12
    P_{s,\rho}^\gamma[L_\rho^\gamma w_{\rho,\sigma}^\gamma](x)
    +
    \frac{1}{4}
    \int_s^\rho
    P_{s,\tau}^\gamma
    \Bigl[
        \gamma(\tau)\tau 
        \partial_x P_{\tau,\rho}^\gamma[v^\gamma(\rho,\,\cdot\,)]
        \partial_x w_{\tau,\sigma}^\gamma
    \Bigr](x)
    \dd{\tau}
\end{equation}
with $\rho\in [s,\sigma]$.
\end{proposition}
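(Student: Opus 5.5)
The proof is essentially the assembly of the computation carried out just above the statement. The plan is to start from the Feynman--Kac representation \eqref{eq:W-FK} of $W^\gamma_{h_1,h_2}=D^2\Psi^\gamma[h_1,h_2]$. This is justified by Corollary~\ref{cor:variation-equations}: the source terms $h_i(\rho)L^\gamma_\rho V^\gamma_{h_j}$ and $\gamma(\rho)\rho\,\partial_xV^\gamma_{h_1}\partial_xV^\gamma_{h_2}$ are bounded and regular enough, since $V_h^\gamma\in\mathcal X_1$ and Lemma~\ref{lem:Parisi-standard} applies. The three terms of \eqref{eq:W-FK} are then treated separately, after substituting the representation \eqref{eq:V-FK} of $V_h^\gamma$.

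For the two linear terms, I would write $L_\rho^\gamma V^\gamma_{h_2}(\rho,\cdot)=\tfrac12\int_\rho^1 h_2(r)L^\gamma_\rho P^\gamma_{\rho,r}[v^\gamma(r,\cdot)]\,dr$. Exchanging $L_\rho^\gamma$ with the $dr$-integral needs the uniform $C^2_b$ bounds on $P^\gamma_{\rho,r}v^\gamma(r,\cdot)$ that come from \eqref{eq:Pv-through-w-new}. Then \eqref{eq:LV-new} gives $-\int_r^1 L^\gamma_\rho w^\gamma_{\rho,\sigma}\,d\sigma$. Applying Fubini over $s\le\rho\le r\le\sigma\le1$ produces \eqref{eq:first-term}, and symmetrically \eqref{eq:second-term}. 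Relabelling $(\rho,r)$ as the ordered pair $(\rho_1\wedge\rho_2,\rho_1\vee\rho_2)$ merges the two terms into a single integral over $[s,\sigma]^2$ with kernel evaluated at $\rho_1\wedge\rho_2$. This gives \eqref{eq:first-second-answer}, which is the first half of $A^{\gamma,s,\sigma,x}$ with prefactor $\tfrac12$.

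For the quadratic term, I would insert \eqref{eq:V-FK} for both factors and reparametrize the domain so that the $\tau$-integral runs over $[s,\rho_1\wedge\rho_2]$. Only the factor indexed by $\rho_1\vee\rho_2$ gets rewritten, via \eqref{eq:dxV-new}, as $-\int_{\rho_1\vee\rho_2}^1\partial_xw^\gamma_{\tau,\sigma}\,d\sigma$. The other factor $\partial_xP^\gamma_{\tau,\rho_1\wedge\rho_2}[v^\gamma(\rho_1\wedge\rho_2,\cdot)]$ is kept as is. A final Fubini, using $\rho_1\vee\rho_2\le\sigma\iff\rho_1,\rho_2\in[s,\sigma]$, moves $\sigma$ outside and yields \eqref{eq:third-term-answer}, the second half of $A$ with prefactor $\tfrac14$. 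Adding the two pieces gives the stated formula, and the overall minus sign is common to both.

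The main obstacle is justifying these interchanges rather than the algebra. The integrand must be jointly measurable and integrable in $(\tau,\rho_1,\rho_2,\sigma)$. Moreover, $\partial_x$ and $L^\gamma_\rho$ must commute with the $\sigma$-integrals, which requires bounds on $\partial_xw^\gamma_{\rho,\sigma}$ and $\partial_{xx}w^\gamma_{\rho,\sigma}$ uniform in $\rho<\sigma$. I would get these by viewing $w^\gamma_{\cdot,\sigma}$ as the solution of the backward equation $\paraop_\rho w=0$ with $C_b^{2+\alpha}$ terminal datum $(\partial_x\Psi^\gamma(\sigma,\cdot))^2$, and applying Theorem~\ref{thm:terminal-time-schauder} on $[0,\sigma]$, with constants uniform in $\sigma$ by Lemma~\ref{lem:Parisi-standard}. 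The boundedness of $h_1,h_2$ then makes every Fubini step legitimate.
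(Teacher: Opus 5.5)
Your proposal is correct and follows the paper's own argument essentially step for step. You start from \eqref{eq:W-FK}, insert \eqref{eq:V-FK}, rewrite the linear terms with \eqref{eq:LV-new} and only the $\rho_1\vee\rho_2$ factor of the quadratic term with \eqref{eq:dxV-new}, and then use Fubini reparametrizations to pull $\sigma$ outside and merge everything into the min-kernel $A^{\gamma,s,\sigma,x}(\rho_1\wedge\rho_2)$. Your added justification of the interchanges, via bounds on $\partial_x w^\gamma_{\rho,\sigma}$ and $\partial_{xx} w^\gamma_{\rho,\sigma}$ uniform in $\sigma$ from the Schauder estimate with terminal datum $(\partial_x\Psi^\gamma(\sigma,\cdot))^2$, is a sensible supplement, since the paper simply invokes Fubini without comment.
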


\subsection{Corollaries of the min-kernel representation}
\label{sec:cor-min-kernel}

We now state a few corollaries of Proposition~\ref{prop:ordered-hessian}
that will be useful in the next sections.

The first one is a factorization of the min-kernel representation.

\begin{corollary}
\label{cor:min-kernel-factorization}
Fix \(h_1,h_2\in \Hoelder{\alpha/2}([0,1])\).
Then, for all $(s,x)\in [0,1]\times \R$,
\begin{multline*}
D^2\Psi^\gamma[h_1,h_2](s,x)
=
-
\int_s^1
\biggl(
A^{\gamma,s,\sigma,x}(s)
\Bigl(
\iint_{[s,\sigma]^2}
h_1(\rho_1)h_2(\rho_2)
\dd{\rho_1}\dd{\rho_2}
\Bigr)
\\
+
\int_s^\sigma
\Bigl(
\iint_{[\rho,\sigma]^2}
h_1(\rho_1)h_2(\rho_2)
\dd\rho_1\dd\rho_2
\Bigr)
\partial_\rho A^{\gamma,s,\sigma,x}(\rho)
\dd{\rho}
\biggr)
\dd{\sigma}.
\end{multline*}
\end{corollary}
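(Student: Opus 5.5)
The plan is to apply the fundamental theorem of calculus in the variable $\rho$ to the kernel $A^{\gamma,s,\sigma,x}$, and then exchange the order of integration. Fix $\sigma\in(s,1]$ and write
\[
H(\rho):=\iint_{[\rho,\sigma]^2}h_1(\rho_1)h_2(\rho_2)\dd\rho_1\dd\rho_2
\]
for $\rho\in[s,\sigma]$. For $(\rho_1,\rho_2)\in[s,\sigma]^2$ we expand
\[
A^{\gamma,s,\sigma,x}(\rho_1\wedge\rho_2)
=
A^{\gamma,s,\sigma,x}(s)
+
\int_s^{\sigma}\indic_{\{\rho\le\rho_1\wedge\rho_2\}}\,\partial_\rho A^{\gamma,s,\sigma,x}(\rho)\dd\rho .
\]
Multiplying by $h_1(\rho_1)h_2(\rho_2)$ and integrating over $[s,\sigma]^2$, the first term produces $A^{\gamma,s,\sigma,x}(s)H(s)$. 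For the second term, Fubini applies since all integrands are bounded. It uses the identity
\[
\{(\rho_1,\rho_2)\in[s,\sigma]^2:\rho\le\rho_1\wedge\rho_2\}=[\rho,\sigma]^2,
\]
and yields $\int_s^\sigma H(\rho)\,\partial_\rho A^{\gamma,s,\sigma,x}(\rho)\dd\rho$. Integrating in $\sigma\in[s,1]$ and inserting the result into Proposition~\ref{prop:ordered-hessian} then gives exactly the stated formula.

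The only genuine issue is justifying that $\rho\mapsto A^{\gamma,s,\sigma,x}(\rho)$ is absolutely continuous on $[s,\sigma]$, with a bounded derivative jointly measurable in $(\sigma,\rho)$. This is what licenses both the fundamental theorem of calculus and Fubini. We treat the two pieces of \eqref{eq:min-kernel-FK} separately.

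\emph{First piece.} For $\rho<\sigma$, apply the terminal time differentiation formula \eqref{eq:TD} to $f(\rho,\cdot)=L_\rho^\gamma w_{\rho,\sigma}^\gamma$. Together with Corollary~\ref{cor:D-Lw}, this gives
\[
\partial_\rho P_{s,\rho}^\gamma[L_\rho^\gamma w_{\rho,\sigma}^\gamma]
=
P_{s,\rho}^\gamma\Bigl[\partial_x\Psi^\gamma(\rho,\cdot)\,\partial_xw^\gamma_{\rho,\sigma}-\tfrac12\gamma(\rho)\rho\,\partial_xv^\gamma(\rho,\cdot)\,\partial_xw^\gamma_{\rho,\sigma}\Bigr],
\]
which is bounded by Lemma~\ref{lem:Parisi-standard}. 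The required $C^{1,2}$ regularity of $L_\rho^\gamma w^\gamma_{\rho,\sigma}$ is the delicate point. We would obtain it from Schauder/Feynman--Kac regularity of $w^\gamma_{\cdot,\sigma}$, whose terminal datum $(\partial_x\Psi^\gamma(\sigma,\cdot))^2$ is smooth in $x$ with bounded derivatives. If needed, we would first work on $[s,\sigma-\varepsilon]$ and let $\varepsilon\to0$, using continuity of $A^{\gamma,s,\sigma,x}$ at $\rho=\sigma$.

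\emph{Second piece.} Its $\rho$-derivative has two contributions. One is the boundary term at $\tau=\rho$, namely
\[
\tfrac14 P_{s,\rho}^\gamma\bigl[\gamma(\rho)\rho\,\partial_xv^\gamma(\rho,\cdot)\,\partial_xw^\gamma_{\rho,\sigma}\bigr],
\]
using $P_{\rho,\rho}^\gamma=\mathrm{id}$. The other is the integral of $\partial_\rho\partial_xP_{\tau,\rho}^\gamma[v^\gamma(\rho,\cdot)]$. By \eqref{eq:dxV-new} this equals $\partial_x w^\gamma_{\tau,\rho}$, so it is bounded. Hence both contributions are bounded and continuous in $\rho$. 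We do not need the explicit form of $\partial_\rho A$ for this corollary, only its existence and integrability. We expect this regularity bookkeeping to be the only real obstacle; everything else is elementary Fubini.
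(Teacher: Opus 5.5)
Your proposal is correct and is essentially the paper's proof: the paper likewise writes $A^{\gamma,s,\sigma,x}(\rho_1\wedge\rho_2)=A^{\gamma,s,\sigma,x}(s)+\int_{(s,\sigma]}\indic_{\{\rho\le\rho_1\}}\indic_{\{\rho\le\rho_2\}}\,\partial_\rho A^{\gamma,s,\sigma,x}(\rho)\dd\rho$ and integrates this against $h_1(\rho_1)h_2(\rho_2)$ in Proposition~\ref{prop:ordered-hessian}, exchanging the order of integration by Fubini. Your regularity bookkeeping for $\rho\mapsto A^{\gamma,s,\sigma,x}(\rho)$, via \eqref{eq:TD}, Corollary~\ref{cor:D-Lw} and the Leibniz rule, is the same computation the paper carries out separately in Corollary~\ref{cor:A-data}, and you are more explicit than the paper's one-line proof about why it licenses the fundamental theorem of calculus.
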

\begin{proof}
The statement follows from the factorization
\begin{equation*}
\label{eq:stieltjes-min-representation}
A^{\gamma,s,\sigma,x}(\rho_1\wedge\rho_2)
=
A^{\gamma,s,\sigma,x}(s)
+
\int_{(s,\sigma]}
\indic_{\{\rho\le\rho_1\}}
\indic_{\{\rho\le\rho_2\}}
\,
\partial_\rho A^{\gamma,s,\sigma,x}(\rho)
\dd{\rho}
\end{equation*}
where \(\rho_1,\rho_2\in[s,\sigma]\).
\end{proof}

By Corollary~\ref{cor:min-kernel-factorization}, the sign of the second
Fr\'echet derivative \(D^2\Psi^\gamma\) is governed by the left endpoint
\(A^{\gamma,s,\sigma,x}(s)\) and the density
\(\partial_\rho A^{\gamma,s,\sigma,x}(\rho)\).
The next corollary reduces the problem to studying the auxiliary function
$w_{s,\sigma}^\gamma$, which will be done in the next section.

\begin{corollary}
    \label{cor:A-data}
    Fix \(0\le s<\sigma\le 1\).
    Then, the following are true.
    \begin{enumerate}
        \item For all $x\in\R$,
        \begin{equation}
        A^{\gamma,s,\sigma,x}(s)
        =
        \frac{1}{4}\partial_{xx}w_{s,\sigma}^\gamma(x)
        +
        \frac{s}{2}\partial_x\Psi^\gamma(s,x)\partial_x w_{s,\sigma}^\gamma(x), 
        \label{eq:left-point}
        \end{equation}
        \item For all \(\rho\in(s,\sigma)\), and \(x\in\mathbb R\),
        \begin{equation}
        \partial_\rho A^{\gamma,s,\sigma,x}(\rho)
        =
        \frac12
        P_{s,\rho}^\gamma
        \Bigl[
        \partial_x\Psi^\gamma(\rho,\cdot)\,
        \partial_x w_{\rho,\sigma}^\gamma
        \Bigr](x)
        +
        \frac14
        \int_s^\rho
        P_{s,\tau}^\gamma
        \Bigl[
        \gamma(\tau)\tau\,
        \partial_x w_{\tau,\rho}^\gamma\,
        \partial_x w_{\tau,\sigma}^\gamma
        \Bigr](x)\,\dd\tau .
        \label{eq:kernel-density-FK}
        \end{equation}
    \end{enumerate}
\end{corollary}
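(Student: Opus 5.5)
The proof is a direct evaluation of the formula \eqref{eq:min-kernel-FK} from Proposition~\ref{prop:ordered-hessian}, with $s<\sigma$ fixed.

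\emph{Part (1).} Set $\rho=s$ in \eqref{eq:min-kernel-FK}. The integral $\int_s^s$ vanishes, and $P_{s,s}^\gamma$ is the identity by Property~\eqref{item:P-identity}. Hence
\[
A^{\gamma,s,\sigma,x}(s)=\tfrac12 L_s^\gamma w_{s,\sigma}^\gamma(x).
\]
Expanding $L_s^\gamma=\frac12\partial_{xx}+s\,\partial_x\Psi^\gamma(s,\cdot)\partial_x$ gives \eqref{eq:left-point} immediately.

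\emph{Part (2).} Write $A=A_1+A_2$. Here $A_1(\rho)=\frac12 P_{s,\rho}^\gamma[L_\rho^\gamma w_{\rho,\sigma}^\gamma](x)$, and $A_2$ is the $\tau$-integral.

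For $A_1$, apply the terminal time differentiation formula \eqref{eq:TD} with $f(\rho,\cdot)=L_\rho^\gamma w_{\rho,\sigma}^\gamma$ and $\rho<\sigma$. This gives $\partial_\rho A_1=\frac12 P_{s,\rho}^\gamma[\paraop_\rho(L_\rho^\gamma w_{\rho,\sigma}^\gamma)]$. Corollary~\ref{cor:D-Lw} then evaluates this as
\[
\tfrac12P_{s,\rho}^\gamma\bigl[\partial_x\Psi^\gamma\,\partial_xw_{\rho,\sigma}^\gamma\bigr]
-\tfrac14P_{s,\rho}^\gamma\bigl[\gamma(\rho)\rho\,\partial_xv^\gamma(\rho,\cdot)\,\partial_xw_{\rho,\sigma}^\gamma\bigr].
\]

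For $A_2$, differentiate under the integral in $\rho$. This produces a boundary term at $\tau=\rho$ and an interior term.
\begin{itemize}
\item The boundary term uses $P_{\rho,\rho}^\gamma v^\gamma(\rho,\cdot)=v^\gamma(\rho,\cdot)$. It equals $\frac14P_{s,\rho}^\gamma[\gamma(\rho)\rho\,\partial_xv^\gamma(\rho,\cdot)\partial_xw_{\rho,\sigma}^\gamma]$, which cancels exactly the second piece of $\partial_\rho A_1$.
\item The interior term involves $\partial_\rho\partial_xP_{\tau,\rho}^\gamma[v^\gamma(\rho,\cdot)]$. By \eqref{eq:dPv-through-w-new} in Corollary~\ref{cor:Pv-through-w}, $\partial_\rho P_{\tau,\rho}^\gamma[v^\gamma(\rho,\cdot)]=w_{\tau,\rho}^\gamma$. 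Commuting $\partial_\rho$ with $\partial_x$ yields $\partial_xw_{\tau,\rho}^\gamma$, which is exactly the second term of \eqref{eq:kernel-density-FK}.
\end{itemize}
Summing the surviving pieces gives \eqref{eq:kernel-density-FK}.

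\emph{Main obstacle.} The delicate part is justifying the analytic steps: applying \eqref{eq:TD} to $L_\rho^\gamma w_{\rho,\sigma}^\gamma$, and interchanging $\partial_\rho$ with $\partial_x$ and with the $\tau$-integral. Applying \eqref{eq:TD} requires $L_\rho^\gamma w_{\rho,\sigma}^\gamma\in C_b^{1,2}([s,\rho_0]\times\R)$ for $\rho_0<\sigma$. I would obtain this from Lemma~\ref{lem:Parisi-standard}, since all spatial derivatives of $\Psi^\gamma$ are bounded and so the terminal datum $(\partial_x\Psi^\gamma(\sigma,\cdot))^2$ is smooth with bounded derivatives. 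This should give bounded derivatives of $w_{\rho,\sigma}^\gamma$ up to order four, together with time derivatives via $\paraop_\rho w=0$, uniformly for $\rho\le\sigma$. For the interchange of derivatives, I would use formula \eqref{eq:dxV-new}, which expresses $\partial_xP_{\tau,\rho}^\gamma[v^\gamma]=-\int_\rho^1\partial_xw_{\tau,\sigma'}^\gamma\,d\sigma'$. Its $\rho$-derivative is then $\partial_xw_{\tau,\rho}^\gamma$ by the fundamental theorem of calculus, once $\sigma'\mapsto\partial_xw_{\tau,\sigma'}^\gamma$ is shown continuous. That continuity follows from the same bounds.
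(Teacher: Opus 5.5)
Your proposal is correct and follows the paper's proof essentially step for step. Part (1) sets $\rho=s$ and uses $P_{s,s}^\gamma=\mathrm{Id}$. Part (2) applies \eqref{eq:TD} together with Corollary~\ref{cor:D-Lw} to the first term of \eqref{eq:min-kernel-FK}, then uses the Leibniz rule on the $\tau$-integral: the boundary term at $\tau=\rho$ cancels the $\partial_x v^\gamma$ contribution, and the interior term is identified via \eqref{eq:dPv-through-w-new}. Your extra justification of the interchange of $\partial_\rho$ and $\partial_x$, via \eqref{eq:dxV-new} and the fundamental theorem of calculus, is more careful than the paper's bare appeal to the Leibniz integral rule, but it does not change the argument.
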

\begin{proof}
Recall the definition of \(A^{\gamma,s,\sigma,x}\) in
\eqref{eq:min-kernel-FK}:
\begin{equation}
A^{\gamma,s,\sigma,x}(\rho)
    =
    \frac12
    P_{s,\rho}^\gamma[L_\rho^\gamma w_{\rho,\sigma}^\gamma](x)
    +
    \frac{1}{4}
    \int_s^\rho
    P_{s,\tau}^\gamma
    \Bigl[
        \gamma(\tau)\tau 
        \partial_x P_{\tau,\rho}^\gamma[v^\gamma(\rho,\,\cdot\,)]
        \partial_x w_{\tau,\sigma}^\gamma
    \Bigr](x)
    \dd{\tau}.
    \nonumber
\end{equation}

We first prove \eqref{eq:left-point}. Taking \(\rho=s\) in
\eqref{eq:min-kernel-FK}, the integral over \([s,s]\) vanishes. Moreover,
Property~\eqref{item:P-identity} gives \(P_{s,s}^\gamma=\mathrm{Id}\). Therefore,
we obtain
\[
    A^{\gamma,s,\sigma,x}(s)
    =
    \frac12 L_s^\gamma w_{s,\sigma}^\gamma(x)
    =
    \frac{1}{4}\partial_{xx}w_{s,\sigma}^\gamma(x)
        +
        \frac{s}{2}\partial_x\Psi^\gamma(s,x)\partial_x w_{s,\sigma}^\gamma(x),
\]
where the second equality follows from the definition of \(L_s^\gamma\) in
\eqref{eq:def-Lgamma}.

It remains to prove \eqref{eq:kernel-density-FK}. Applying the endpoint
differentiation formula \eqref{eq:TD} and Corollary~\ref{cor:D-Lw}
to the first term in \eqref{eq:min-kernel-FK} gives
\begin{align}
    \partial_\rho
    \frac12
    P_{s,\rho}^\gamma
    \bigl[
        L_\rho^\gamma w_{\rho,\sigma}^\gamma
    \bigr](x)
    &=
    \frac12
    P_{s,\rho}^\gamma
    \Bigl[
        \paraop_\rho
        \bigl(
            L_\rho^\gamma w_{\rho,\sigma}^\gamma
        \bigr)
    \Bigr](x)
    \nonumber\\
    &=
    \frac12
    P_{s,\rho}^\gamma
    \Bigl[
        \partial_x\Psi^\gamma(\rho,\cdot)
        \partial_xw_{\rho,\sigma}^\gamma
    \Bigr](x)
    -
    \frac14
    P_{s,\rho}^\gamma
    \Bigl[
        \gamma(\rho)\rho\,
        \partial_xv^\gamma(\rho,\cdot)
        \partial_xw_{\rho,\sigma}^\gamma
    \Bigr](x).
    \label{eq:first-kernel-density-term}
\end{align}

We now treat the second term in \eqref{eq:min-kernel-FK}.
Note that 
\begin{align}
\partial_x P_{\rho,\rho}^\gamma[v^\gamma(\rho,\cdot)]
&=
\partial_x v^\gamma(\rho,\cdot),
\nonumber\\
\partial_\rho
\partial_xP_{\tau,\rho}^\gamma[v^\gamma(\rho,\cdot)]
&=
\partial_xw_{\tau,\rho}^\gamma,
\nonumber
\end{align}
where the first identity follows from Property~\eqref{item:P-identity}
(\(P_{\rho,\rho}^\gamma=\mathrm{Id}\)), and the second follows from
\eqref{eq:dPv-through-w-new} in Corollary~\ref{cor:Pv-through-w}. We now
differentiate the second term in \eqref{eq:min-kernel-FK}.
By the Leibniz integral rule and the two identities
above, this equals
\begin{equation}
    \frac14
    P_{s,\rho}^\gamma
    \Bigl[
        \gamma(\rho)\rho\,
        \partial_xv^\gamma(\rho,\cdot)
        \partial_xw_{\rho,\sigma}^\gamma
    \Bigr](x)
    +
    \frac14
    \int_s^\rho
    P_{s,\tau}^\gamma
    \Bigl[
        \gamma(\tau)\tau\,
        \partial_xw_{\tau,\rho}^\gamma\,
        \partial_xw_{\tau,\sigma}^\gamma
    \Bigr](x)\,\dd\tau .
    \label{eq:second-kernel-density-term}
\end{equation}
The first term in \eqref{eq:second-kernel-density-term} cancels the second term
in \eqref{eq:first-kernel-density-term}, 
so summing
\eqref{eq:first-kernel-density-term} and \eqref{eq:second-kernel-density-term}
yields \eqref{eq:kernel-density-FK}.
\end{proof}

\section{Cone preservation of the transition kernel}
\label{sec:cone-preservation}

As explained in Section~\ref{sec:cor-min-kernel}, 
the sign of the second
Fr\'echet derivative \(D^2\Psi^\gamma\) is governed by the left endpoint
\(A^{\gamma,s,\sigma,x}(s)\) and the density
\(\partial_\rho A^{\gamma,s,\sigma,x}(\rho)\).
In view of Corollary~\ref{cor:A-data}, this reduces to understanding the function
\begin{equation}
    w_{\rho,\sigma}^\gamma(x)
    =
    P_{\rho,\sigma}^\gamma\bigl[
    (\partial_x\Psi^\gamma(\sigma,\cdot))^2
    \bigr](x).
\end{equation}
By Lemma~\ref{lem:Parisi-standard}(2)--(3), $x\mapsto (\partial_x\Psi^\gamma(\sigma,x))^2$
is even and nonnegative on $\R$ and nondecreasing on $[0,\infty)$,
which motivates the cone-preservation property of the transition operator
\(P_{\rho,\sigma}^\gamma\). More precisely, define the cone
\begin{equation}
\cone
=
\Bigl\{
 f\in C_b^{3+\alpha}(\mathbb R)
 \,\Big|\,
 f \text{ is even, nonnegative, and nondecreasing on }[0,\infty)
\Bigr\}.
\label{eq:def-cone}
\end{equation}
The purpose of this section is to prove that the transition operator
preserves the cone.

\begin{proposition}
\label{prop:cone-preservation}
Fix \(0\le \rho\le \sigma\le1\).
If \(f\in \cone\), then $P_{\rho,\sigma}^\gamma f\in\mathcal C$.
\end{proposition}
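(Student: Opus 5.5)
The plan is to check the four defining properties of $\cone$ for $u(\rho,\cdot):=P_{\rho,\sigma}^\gamma f$ one at a time. Evenness and nonnegativity are immediate from Property~\eqref{item:P-even} and Property~\eqref{item:P-positive} of the transition family, since $f$ is even and $f\ge 0$. The two nontrivial points are the regularity $u(\rho,\cdot)\in C_b^{3+\alpha}(\R)$ and the monotonicity of $u(\rho,\cdot)$ on $[0,\infty)$. I intend to get both from the PDE satisfied by $u$ and by its spatial derivative $z:=\partial_x u$.

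\emph{Regularity.} By the Feynman--Kac formula \eqref{eq:FK} with $b=0$, $u$ is the bounded classical solution of $-\partial_r u=\gamma(r)L_r^\gamma u$ on $[\rho,\sigma)\times\R$ with $u(\sigma,\cdot)=f$. By Theorem~\ref{thm:terminal-time-schauder}, applied after rescaling time to $[0,1]$, we have $u\in\ParaHoelder{\alpha}$. The coefficients are $a_2=\tfrac12\gamma$ and $a_1=\gamma r\,\partial_x\Psi^\gamma$, and by Lemma~\ref{lem:Parisi-standard}(1) all their spatial derivatives are bounded and Lipschitz, hence lie in $C_b^{\alpha/2,\alpha}$.

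Formally differentiating in $x$, or equivalently using \eqref{eq:Ds-first-derivative-rule} together with $\paraop_r u=0$, shows that $z$ should solve
\[
-\partial_r z=\gamma(r)L_r^\gamma z+\gamma(r)r\,\partial_{xx}\Psi^\gamma(r,\cdot)\,z,\qquad z(\sigma,\cdot)=f'\in C_b^{2+\alpha}(\R).
\]
This equation has an extra bounded zeroth-order term $a_0=\gamma r\,\partial_{xx}\Psi^\gamma\in C_b^{\alpha/2,\alpha}$. I would solve it in $\ParaHoelder{\alpha}$, using the proof of Theorem~\ref{thm:terminal-time-schauder}, which already handles a zeroth-order coefficient after the exponential damping. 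I would then identify the solution with $\partial_x u$. One way is to note that $\int_0^x z\,\dd y+u(\cdot,0)$ solves the equation for $u$ and invoke uniqueness; another is to use difference quotients together with the Schauder estimate. This identification yields $\partial_x u(\rho,\cdot)\in C_b^{2+\alpha}$, that is, $u(\rho,\cdot)\in C_b^{3+\alpha}(\R)$. I expect this justification step to be the main technical obstacle, although it is routine.

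\emph{Monotonicity.} Since $u(r,\cdot)$ is even, $z(r,\cdot)$ is odd, so $z(r,0)=0$ for all $r$. Also $z(\sigma,x)=f'(x)\ge 0$ for $x\ge 0$, because $f$ is nondecreasing on $[0,\infty)$. I would apply Theorem~\ref{thm:comparison} on $\Domain=(0,\infty)$, after the time shift $[\rho,\sigma]\to[0,\sigma-\rho]$ (the case $\rho=\sigma$ is trivial), with
\[
\LinearOp=\gamma L^\gamma,\qquad a_0=\gamma r\,\partial_{xx}\Psi^\gamma .
\]
Its hypotheses hold: $z$ is bounded, continuous and $C^{1,2}$; the operator inequality holds with equality; and $z\ge 0$ on the parabolic boundary $\{x=0\}\cup\{r=\sigma\}$. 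Uniform ellipticity comes from $\inf\gamma>0$, and $a_0$ is bounded since $0\le\partial_{xx}\Psi^\gamma\le 1$. The theorem gives $z\ge 0$ on $[\rho,\sigma]\times[0,\infty)$, so $u(\rho,\cdot)$ is nondecreasing on $[0,\infty)$. Combined with evenness and nonnegativity, this gives $P_{\rho,\sigma}^\gamma f\in\cone$.
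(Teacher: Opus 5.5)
Your proposal is correct and follows the paper's proof step by step: evenness and nonnegativity come from the transition-operator properties, regularity comes from Schauder estimates for $u$ and for $z=\partial_x u$ (whose equation carries the zeroth-order term $a_0=\gamma(r)r\,\partial_{xx}\Psi^\gamma$), and monotonicity comes from the weak maximum principle on the half-line using $z(r,0)=0$ and $z(\sigma,\cdot)=f'\ge 0$ on $[0,\infty)$. You are more careful than the paper about the zeroth-order term in Theorem~\ref{thm:terminal-time-schauder} and about identifying the solution with $\partial_x u$; for that identification, use your difference-quotient route, because $u(\cdot,0)+\int_0^x z\,\dd y$ a priori satisfies the $u$-equation only up to an $x$-independent source and need not be bounded, so plain uniqueness does not apply verbatim.
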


\begin{remark}
\label{rem:Auffinger-Chen-proof}
The auxiliary function \(w_{\rho,\sigma}^\gamma\) is the variance term in
Proposition~4 in Auffinger and Chen \cite{AuffingerChen16}.
In their proof, an FKG-type argument propagates the relevant
evenness and monotonicity properties through the finite-step
Parisi recursion. In the
present formulation, the same role is played by Proposition~\ref{prop:cone-preservation}:
the transition operator \(P_{\rho,\sigma}^\gamma\) preserves the cone of even,
nonnegative functions that are nondecreasing on \([0,\infty)\). 
\end{remark}

\begin{proof}[Proof of Proposition~\ref{prop:cone-preservation}]
    Fix $f\in\cone$.

    Fix $\sigma\in [0,1]$. For all $\rho\in [0,\sigma]$ and $x\in\R$, we adopt the abbreviation
 \[
 u(\rho,x) = P_{\rho,\sigma}^\gamma f(x).
 \]
Then, Property~\eqref{item:P-even} and Property~\eqref{item:P-positive}
of the transition operator imply that 
$u(\rho,\cdot)$ is even and nonnegative.

We now verify the regularity condition. 
Note that \(u\) solves
\[
    -\partial_\rho u=\gamma(\rho)L_\rho^\gamma u,
    \qquad
    u(\sigma,\cdot)=f,
\]
on $[0,\sigma]\times\R$, so Theorem~\ref{thm:terminal-time-schauder} yields that $u\in C_b^{1+\alpha/2,2+\alpha}([0,\sigma]\times\mathbb R)$.

We adopt the abbreviation
\[
    z(\rho,x)=\partial_xu(\rho,x).
\]
Let $a_0(\rho,x)=\gamma(\rho)\rho\,\partial_{xx}\Psi^\gamma(\rho,x)$. On $[0,\sigma]\times\R$,
$z$ solves 
\[
\bigl(-\partial_\rho-\gamma(\rho)L_\rho^\gamma-a_0(\rho,x)\bigr)z=0,
\qquad
z(\sigma,x)=f'(x).
\]
Since \(f\in C_b^{3+\alpha}(\mathbb R)\), we have
\(f'\in C_b^{2+\alpha}(\mathbb R)\). Moreover, the operator satisfies the conditions for 
Theorem~\ref{thm:terminal-time-schauder}, which yields $z\in C_b^{1+\alpha/2,2+\alpha}([0,\sigma]\times\mathbb R)$
and thus the desired regularity.

It remains to prove the monotonicity on the half-line \([0,\infty)\).
The evenness of \(u(\rho,\cdot)\) gives \(z(\rho,0)=0\).
Moreover, at the terminal time \(\sigma\), we have
\[
    z(\sigma,x)
    =
    \partial_xu(\sigma,x)
    =
    f'(x)\ge0,
    \qquad x\in[0,\infty).
\]
Thus, \(z\) solves, on \((0,\sigma)\times(0,\infty)\),
\[
    \begin{cases}
    \bigl(-\partial_\rho-\gamma(\rho)L_\rho^\gamma-a_0(\rho,x)\bigr)z=0,
    & (\rho,x)\in [0,\sigma)\times[0,\infty),
    \\[0.5ex]
    z(\rho,0)=0,
    & \rho\in [0,\sigma],
    \\[0.5ex]
    z(\sigma,x)\geq 0. 
    & x\in[0,\infty).
    \end{cases}
\]
By
Lemma~\ref{lem:Parisi-standard}, \(a_0\ge0\) and is bounded.  Applying
the weak maximum principle
(Theorem~\ref{thm:comparison}) gives
\[
    z(\rho,x)\ge0,
    \qquad (\rho,x)\in [0,\sigma]\times[0,\infty).
\]
In particular, this implies \(P_{\rho,\sigma}^\gamma f=u(\rho,\cdot)\) is nondecreasing on
\([0,\infty)\), completing the proof.
\end{proof}

We now apply Corollary~\ref{cor:A-data} and Proposition~\ref{prop:cone-preservation}
to obtain the following result, which will immediately 
imply Theorem~\ref{thm:concavity}(2) and Theorem~\ref{thm:concavity}(3).

\begin{corollary}
\label{cor:A-positivity}
Fix $0\leq s< \sigma\leq 1$. Then, the following are true. 
\begin{enumerate}
    \item $A^{\gamma,s,\sigma,0}(s)\geq 0$.
    \item For all $x\in \R$ and $\rho\in (s,\sigma)$, $\partial_\rho A^{\gamma,s,\sigma,x}(\rho)\geq 0$.
\end{enumerate}
\end{corollary}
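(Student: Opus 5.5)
Both items follow from Corollary~\ref{cor:A-data} together with Proposition~\ref{prop:cone-preservation} applied to $f=(\partial_x\Psi^\gamma(\sigma,\cdot))^2$. The plan is first to check that this $f$ lies in $\cone$. By Lemma~\ref{lem:Parisi-standard}(2), $\partial_x\Psi^\gamma(\sigma,\cdot)$ is odd, so $f$ is even and nonnegative. Since $\partial_{xx}\Psi^\gamma\ge0$, $\partial_x\Psi^\gamma(\sigma,\cdot)$ is nondecreasing and vanishes at $0$, hence is nonnegative on $[0,\infty)$. Therefore $f'=2\partial_x\Psi^\gamma\,\partial_{xx}\Psi^\gamma\ge0$ there.

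For the $C_b^{3+\alpha}$ regularity I would use Lemma~\ref{lem:Parisi-standard}(1): all spatial derivatives of $\partial_x\Psi^\gamma$ are bounded and continuous, so $f$ is $C_b^\infty$ in $x$. This regularity check is the main technical point, and it is routine. With $f\in\cone$, Proposition~\ref{prop:cone-preservation} gives $w_{\tau,\sigma}^\gamma\in\cone$ for every $\tau\le\sigma$. In particular $w^\gamma_{\tau,\sigma}$ is even, so $\partial_x w_{\tau,\sigma}^\gamma$ is odd and nonnegative on $[0,\infty)$. Moreover, $w^\gamma_{\tau,\sigma}$ attains its minimum at $0$.

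\emph{Item (1).} Evaluate \eqref{eq:left-point} at $x=0$. Since $\partial_x\Psi^\gamma(s,0)=0$ by oddness, the first-order term drops out. It remains to see $\partial_{xx}w_{s,\sigma}^\gamma(0)\ge0$. This holds because $w_{s,\sigma}^\gamma$ is $C^2$, even, and nondecreasing on $[0,\infty)$, so $0$ is a minimum point. (Equivalently, $\partial_xw\ge0$ on $[0,\infty)$ with $\partial_xw(0)=0$ forces $\partial_{xx}w(0)\ge0$.)

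\emph{Item (2).} Use \eqref{eq:kernel-density-FK} and show that both integrands are pointwise nonnegative functions of $y$; positivity of $P^\gamma$ (Property~\eqref{item:P-positive}) then finishes for every $x$. For the first term, $\partial_x\Psi^\gamma(\rho,\cdot)$ and $\partial_xw_{\rho,\sigma}^\gamma$ are both odd and both nonnegative on $[0,\infty)$. Their product is therefore even and nonnegative on $[0,\infty)$, hence nonnegative on $\R$. For the second term, $\gamma(\tau)\tau\ge0$, and $\partial_xw_{\tau,\rho}^\gamma\,\partial_xw_{\tau,\sigma}^\gamma$ is a product of two odd functions, each nonnegative on $[0,\infty)$. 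Here $w_{\tau,\rho}^\gamma\in\cone$ by the same argument with $\sigma$ replaced by $\rho$. So this product is also nonnegative everywhere, and integrating in $\tau$ preserves the sign.
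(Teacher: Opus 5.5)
Your proposal is correct and follows essentially the same route as the paper. Like the paper, you first place $(\partial_x\Psi^\gamma(\sigma,\cdot))^2$ in $\cone$ and apply cone preservation to get $w^\gamma_{\tau,\sigma}\in\cone$. You then evaluate \eqref{eq:left-point} at $x=0$, and use positivity of $P^\gamma$ on the products of odd functions in \eqref{eq:kernel-density-FK}. You are slightly more explicit than the paper about the $C_b^{3+\alpha}$ regularity of $f$ and about needing $w^\gamma_{\tau,\rho}\in\cone$ with terminal time $\rho$.
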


\begin{proof}
    Fix $0\leq s< \sigma\leq 1$.
    By Lemma~\ref{lem:Parisi-standard}(2)--(3), $\Psi^\gamma(\rho,\cdot)$ is convex and even, 
so \(\partial_x\Psi^\gamma(\rho,\cdot)\) is odd and nonnegative on \([0,\infty)\).
    In particular, $(\partial_x\Psi^\gamma(\rho,\cdot))^2\in\cone$.
    Since $(\partial_x\Psi^\gamma(\sigma,\cdot))^2\in\cone$, Proposition~\ref{prop:cone-preservation} yields
\[
    w_{\rho,\sigma}^\gamma
    =
    P_{\rho,\sigma}^\gamma\bigl[
    (\partial_x\Psi^\gamma(\sigma,\cdot))^2
    \bigr]
    \in\cone,
    \qquad \rho\in [s,\sigma].
\]
In particular, 
for all $\rho\in [s,\sigma], $\(\partial_xw_{\rho,\sigma}^\gamma\) is odd and nonnegative on
\([0,\infty)\).

\begin{enumerate}
\item By \eqref{eq:left-point} from Corollary~\ref{cor:A-data}
and the fact that $\partial_x\Psi^\gamma(s,\cdot)$ is odd recalled in the first paragraph,
\[
    A^{\gamma,s,\sigma,0}(s)
    =
    \frac14\partial_{xx}w_{s,\sigma}^\gamma(0)
    +
    \frac{1}{2}\, s\partial_x\Psi^\gamma(s,0)\partial_xw_{s,\sigma}^\gamma(0)
    =
    \frac14\partial_{xx}w_{s,\sigma}^\gamma(0)
\]
Since \(w_{s,\sigma}^\gamma\in\cone\)
 by the first paragraph, \(0\) is a minimum of
\(w_{s,\sigma}^\gamma\). Therefore,
\[
    A^{\gamma,s,\sigma,0}(s)
    =
    \frac14\partial_{xx}w_{s,\sigma}^\gamma(0)
    \ge0.
\]
\item Fix $x\in\R$ and \(\rho\in(s,\sigma)\). Recall that \eqref{eq:kernel-density-FK} from Corollary~\ref{cor:A-data} gives
\begin{equation*}
    \partial_\rho 
    A^{\gamma,s,\sigma,x}
    (\rho)
    =
    \frac12
    P_{s,\rho}^\gamma
    \Bigl[
        \partial_x\Psi^\gamma(\rho,\cdot)\,
        \partial_xw_{\rho,\sigma}^\gamma
    \Bigr](x)
    +
    \frac14
    \int_s^\rho
    P_{s,\tau}^\gamma
    \Bigl[
        \gamma(\tau)\tau\,
        \partial_xw_{\tau,\rho}^\gamma\,
        \partial_xw_{\tau,\sigma}^\gamma
    \Bigr](x)\,\dd\tau .
\end{equation*}
By the first paragraph, 
the functions \(\partial_x\Psi^\gamma(\rho,\cdot)\),
\(\partial_xw_{\rho,\sigma}^\gamma\), \(\partial_xw_{\tau,\rho}^\gamma\), and
\(\partial_xw_{\tau,\sigma}^\gamma\) 
are also odd and nonnegative on \([0,\infty)\).
Hence, the products inside the transition operators are
nonnegative on \(\R\), 
which implies that $\partial_\rho A^{\gamma,s,\sigma,x}(\rho)\ge0$
by 
the properties of the transition operator of \(P^\gamma\).
\qedhere
\end{enumerate}
\end{proof}

\subsection{Proof of Theorem~\ref{thm:concavity}(2)
and Theorem~\ref{thm:concavity}(3)
}
\label{sec:concavity-proof}

Let \(h_1,h_2\in \Hoelder{\alpha/2}([0,1])\).
Recall from Corollary~\ref{cor:min-kernel-factorization}
that 
\begin{multline}
D^2\Psi^\gamma[h_1,h_2](s,0)
=
-
\int_s^1
\biggl(
A^{\gamma,s,\sigma,0}(s)
\Bigl(
\iint_{[s,\sigma]^2}
h_1(\rho_1)h_2(\rho_2)
\dd{\rho_1}\dd{\rho_2}
\Bigr)
\\
+
\int_s^\sigma
\Bigl(
\iint_{[\rho,\sigma]^2}
h_1(\rho_1)h_2(\rho_2)
\dd\rho_1\dd\rho_2
\Bigr)
\partial_\rho A^{\gamma,s,\sigma,0}(\rho)
\dd{\rho}
\biggr)
\dd{\sigma}.
\label{eq:DPsi-formula}
\end{multline}
Moreover, Corollary~\ref{cor:A-positivity} implies that 
both $A^{\gamma,s,\sigma,0}(s)$ and $\partial_\rho A^{\gamma,s,\sigma,x}(\rho)
$ are nonnegative.

If \(h_1=h_2=h\), then
\eqref{eq:DPsi-formula} is nonpositive, which proves Theorem~\ref{thm:concavity}(2). 

If $h_1\geq 0$ and $h_2\geq 0$, then \eqref{eq:DPsi-formula} is nonpositive, which proves Theorem~\ref{thm:concavity}(3).

\section{Proof of Theorem~\ref{thm:chen-initial-convexity}}
\label{sec:proof-chen-initial-convexity}

We recall the path spaces \(\PathSpace_p\) from
\eqref{eq:hopfapp-scalar-path-space}.
\begin{lemma}
\label{lem:PathSpace-alpha-density}
For every \(1\le p<\infty\), the set \(\SmoothCore_\alpha\) is dense in
\(\PathSpace_p\) with respect to the \(L^p\)-norm.
\end{lemma}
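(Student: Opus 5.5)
The plan is to approximate in two stages. First reduce to bounded paths, then smooth and make strictly increasing with value $0$ at the origin. Fix $\sfq\in\PathSpace_p$ and $\varepsilon>0$.

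\emph{Truncation.} Set $\sfq^M:=\sfq\wedge M$. This is still càdlàg, nondecreasing and nonnegative, and it lies in $\PathSpace_\infty$. Since $|\sfq-\sfq^M|\le|\sfq|\indic_{\{\sfq>M\}}$, dominated convergence gives $\|\sfq-\sfq^M\|_{L^p}\to0$ as $M\to\infty$. So we may assume $\sfq$ is bounded by some $M$.

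\emph{Mollification.} Extend $\sfq$ to $\R$ by $\sfq(r)=\sfq(0)$ for $r<0$ and $\sfq(r)=\sfq(1)$ for $r\ge1$. The extension is nondecreasing, bounded and càdlàg. Take a smooth nonnegative mollifier $\eta_\delta$ supported in $[-\delta,0]$, so that we average to the right, and set $\sfq_\delta:=\sfq*\eta_\delta$. Then $\sfq_\delta$ is $C^\infty$, nondecreasing, and takes values in $[0,M]$. By the standard $L^p$ convergence of mollifications of bounded functions on a bounded interval, $\sfq_\delta\to\sfq$ in $L^p([0,1))$; right-continuity even gives pointwise convergence everywhere.

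\emph{Producing a path in $\SmoothCore_\alpha$.} Define
\[
\tilde\sfq(r):=\sfq_\delta(r)-\sfq_\delta(0)+\varepsilon r .
\]
This satisfies $\tilde\sfq(0)=0$, has derivative at least $\varepsilon$, is smooth on $[0,1]$, and is therefore in $C^{1+\alpha/2}([0,1])$. Hence $\tilde\sfq\in\SmoothCore_\alpha$. The error from this adjustment is
\[
\|\tilde\sfq-\sfq_\delta\|_{L^p}\le \sfq_\delta(0)+\varepsilon .
\]

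\emph{Main obstacle.} The issue is the normalization $\sfq(0)=0$: subtracting the constant $\sfq_\delta(0)$ does not vanish in $L^p$ when $\sfq(0)>0$. As written, $\PathSpace_p$ does not require $\sfq(0)=0$, so a nonzero constant path cannot be approximated by paths vanishing at $0$ via a uniform constant shift. Instead, I would use a steep ramp near $0$. Let
\[
\tilde\sfq(r):=\min\bigl(r/\kappa,1\bigr)\,\sfq_\delta(r)+\varepsilon r,
\]
smoothed at the corner. This path starts at $0$ and differs from $\sfq_\delta$ only on $[0,\kappa]$, where the difference is at most $M$. So its $L^p$ error is at most $M\kappa^{1/p}+\varepsilon$, which tends to $0$ because $p<\infty$ (this is exactly where $p<\infty$ is used).

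Two points must then be checked. First, the derivative must stay positive: on $[0,\kappa]$ it is at least $\sfq_\delta/\kappa\cdot 0+\varepsilon\ge\varepsilon$, since both factors are nondecreasing and nonnegative. Second, the corner smoothing must preserve $C^{1+\alpha/2}$ regularity, which follows by replacing $\min(r/\kappa,1)$ with a smooth nondecreasing cutoff. With these checks, the diagonal choice $M\to\infty$, $\delta\to0$, $\kappa\to0$, $\varepsilon\to0$ gives density.
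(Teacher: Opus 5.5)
Your proposal is correct and follows essentially the same route as the paper: truncate at level $M$, force the value $0$ at the origin by a ramp on a short interval at $L^p$-cost of order $M\kappa^{1/p}$, smooth in a monotonicity-preserving way, and add $\varepsilon t$ so that the derivative is bounded below. The differences are the order of the steps and the smoothing device. You mollify first with a one-sided kernel and then apply a smooth cutoff $\chi_\kappa$. The paper instead ramps first with $(t/\delta)\sfq^M(t)$ and then takes Bernstein polynomials, which preserve monotonicity and the endpoint value $0$. Note also that $p<\infty$ is used in the smoothing step as well, not only in the ramp, since neither mollification nor Bernstein approximation converges uniformly across jumps.
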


\begin{proof}
Let \(\sfq\in\PathSpace_p\).  Choose a nondecreasing representative.  For
\(M>0\), set \(\sfq^M=\sfq\wedge M\).  Then
\(\sfq^M\to\sfq\) in \(L^p\) as \(M\to\infty\).  For fixed \(M\) and
\(\delta\in(0,1)\), define
\[
    \sfq^{M,\delta}(t)
    =
    \begin{cases}
    \dfrac{t}{\delta}\sfq^M(t), & 0\le t\le\delta,\\[1ex]
    \sfq^M(t), & \delta<t<1.
    \end{cases}
\]
Then \(\sfq^{M,\delta}\) is nonnegative, nondecreasing, starts from zero, and
\(\sfq^{M,\delta}\to\sfq^M\) in \(L^p\) as \(\delta\downarrow0\).
Extend \(\sfq^{M,\delta}\) to \([0,1]\) by its left limit at \(1\), and let
\(B_n\sfq^{M,\delta}\) be its Bernstein polynomial.  Since Bernstein
polynomials preserve monotonicity and endpoint values,
\(B_n\sfq^{M,\delta}\) is nonnegative, nondecreasing, and vanishes at zero.
Moreover,
\[
    B_n\sfq^{M,\delta}\longrightarrow \sfq^{M,\delta}
    \qquad\text{in }L^p([0,1]),
\]
by pointwise convergence at continuity points of the bounded monotone function
\(\sfq^{M,\delta}\) and dominated convergence.  Finally, for
\(\varepsilon>0\), set
\[
    \sfq_{M,\delta,n,\varepsilon}(t)
    =
    B_n\sfq^{M,\delta}(t)+\varepsilon t .
\]
Then \(\sfq_{M,\delta,n,\varepsilon}\in C^\infty([0,1])\), starts from zero,
and satisfies
\[
    \frac{d}{dt}\sfq_{M,\delta,n,\varepsilon}(t)
    \ge \varepsilon>0 .
\]
Thus \(\sfq_{M,\delta,n,\varepsilon}\in\SmoothCore_\alpha\).  Choosing
successively \(M\), \(\delta\), \(n\), and \(\varepsilon\) proves the density.
\end{proof}

We next prove the \(L^1\)-stability of $\psi$.

\begin{lemma}
\label{lem:smooth-core-L1-stability}
For every \(\sfq_0,\sfq_1\in\SmoothCore_\alpha\),
\[
    |\psi(\sfq_0)-\psi(\sfq_1)|
    \le
    \norm*{\sfq_0-\sfq_1}_{L^{1}[0,1]}.
\]
\end{lemma}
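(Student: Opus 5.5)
The plan is to interpolate linearly between the two paths and differentiate along the segment, using the first variation formula \eqref{eq:V-FK}. Set $\sfq_\theta=(1-\theta)\sfq_0+\theta\sfq_1$ for $\theta\in[0,1]$. Since $\SmoothCore_\alpha$ is convex, each $\sfq_\theta$ lies in $\SmoothCore_\alpha$. Likewise $2\sfq_\theta\in\SmoothCore_\alpha$, with $\gamma_\theta:=2\dot\sfq_\theta\in\Opendenseset_\alpha$ affine in $\theta$. By the change of variables \eqref{eq:gamma-change-of-variables}, together with Lemma~\ref{lem:gamma-conventional-change-of-variable} and uniqueness, $\Phi^{2\sfq_\theta}(0,0)=\Psi^{\gamma_\theta}(0,0)$. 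Hence
\[
\psi(\sfq_\theta)=\sfq_\theta(1)-\Psi^{\gamma_\theta}(0,0).
\]
By Theorem~\ref{thm:concavity}(1), the map $\theta\mapsto\psi(\sfq_\theta)$ is $C^1$. Write $h:=\dot\sfq_1-\dot\sfq_0\in\Hoelder{\alpha/2}([0,1])$. Then
\[
\frac{d}{d\theta}\psi(\sfq_\theta)=\int_0^1 h(\sigma)\dd\sigma-V^{\gamma_\theta}_{2h}(0,0),
\]
where $V^{\gamma_\theta}_{2h}(0,0)=\int_0^1 h(\sigma)P^{\gamma_\theta}_{0,\sigma}[v^{\gamma_\theta}(\sigma,\cdot)](0)\dd\sigma$ by \eqref{eq:V-FK}.

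Next I would insert \eqref{eq:Pv-through-w-new} from Lemma~\ref{lem:Dv-rule}. It gives $P_{0,\sigma}^{\gamma}[v^\gamma(\sigma,\cdot)](0)=1-\int_\sigma^1 w^\gamma_{0,r}(0)\dd r$, so the derivative becomes
\[
\frac{d}{d\theta}\psi(\sfq_\theta)=\int_0^1 h(\sigma)\int_\sigma^1 w^{\gamma_\theta}_{0,r}(0)\dd r\dd\sigma
=\int_0^1 w^{\gamma_\theta}_{0,r}(0)\bigl(\sfq_1(r)-\sfq_0(r)\bigr)\dd r,
\]
using Fubini and $\sfq_0(0)=\sfq_1(0)=0$, so that $\int_0^r h=\sfq_1(r)-\sfq_0(r)$.

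The remaining point is the bound $0\le w^\gamma_{0,r}(0)\le1$. It holds because $w^\gamma_{0,r}=P^\gamma_{0,r}[(\partial_x\Psi^\gamma(r,\cdot))^2]$, the estimate $|\partial_x\Psi^\gamma|\le1$ from Lemma~\ref{lem:Parisi-standard}(3) applies, and $P^\gamma$ is positive and mass-preserving. It follows that
\[
\Bigl|\frac{d}{d\theta}\psi(\sfq_\theta)\Bigr|\le\norm{\sfq_1-\sfq_0}_{L^1}.
\]
Integrating over $\theta\in[0,1]$ then gives the lemma.

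The main obstacle is the careful bookkeeping of the factor $2$ and the chain rule. Concretely, one must check that $D\Psi^\gamma[2h]=2V_h^\gamma$, and that the constant factor $\tfrac12$ in \eqref{eq:V-FK} combines with it to give exactly coefficient $1$. This is what makes the $\sfq(1)$ term cancel the constant part, leaving only the $w$-term. One also needs to confirm that Fréchet differentiability in $\Opendenseset_\alpha$ justifies the derivative formula along the segment, which is immediate from Theorem~\ref{thm:concavity}(1).
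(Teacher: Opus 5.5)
Your proof is correct, and it takes a genuinely different route from the paper's.

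\textbf{The paper's route.} The paper never differentiates in $\sfq$. It continues $\Phi^{2\sfq_0}$ and $\Phi^{2\sfq_1}$ to the common horizon $T=2(\sfq_0(1)\vee\sfq_1(1))$ by $\log\cosh x+(T-t)/2$, that is, with inverse profile $\zeta_i\equiv1$ past $2\sfq_i(1)$, so that $\psi(\sfq_i)=T/2-U_i(0,0)$. The difference $\tilde U=U_0-U_1$ then solves a linear equation with source $c=\frac12(\zeta_0-\zeta_1)(\partial_xU_1)^2$, and $|c|\le\frac12|\zeta_0-\zeta_1|$ by $|\partial_xU_1|\le1$. The paper applies the weak maximum principle (Theorem~\ref{thm:comparison}) to $R\pm\tilde U$ with $R(t)=\frac12\int_t^T|\zeta_0-\zeta_1|\,dr$. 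It concludes with the area identity $\int_0^T|\zeta_0-\zeta_1|\,dt=2\int_0^1|\sfq_0-\sfq_1|\,ds$ between paths and their generalized inverses.

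\textbf{Your route.} You instead integrate the first variation along the segment, combining Theorem~\ref{thm:concavity}(1), \eqref{eq:V-FK} and \eqref{eq:Pv-through-w-new}. Your factor-$2$ bookkeeping is right, and so is the cancellation of $\sfq_\theta(1)$ against the constant $1$ in $P^{\gamma}_{0,\sigma}[v^\gamma(\sigma,\cdot)](0)=1-\int_\sigma^1 w^\gamma_{0,r}(0)\,dr$. There is no circularity, since those results are established in earlier sections independently of this lemma. Your only quantitative input is the same bound $|\partial_x\Psi^\gamma|\le1$ that the paper uses.

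\textbf{What your route buys.} It yields an explicit gradient. At $\sfq\in\SmoothCore_\alpha$, the derivative of $\psi$ in a direction $\sfq'-\sfq$ (with $\sfq'\in\SmoothCore_\alpha$) is $\int_0^1 w^{2\dot\sfq}_{0,r}(0)\,(\sfq'-\sfq)(r)\,dr$, with density in $[0,1]$. This gives more than the $L^1$-Lipschitz bound: it also shows $\psi$ is monotone for the pointwise order on $\SmoothCore_\alpha$, because $w\ge0$. It also identifies the gradient as an averaged squared magnetization.

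\textbf{What it costs.} It relies on the full Fr\'echet $C^1$ theory and the Feynman--Kac representations, which confines it to $\SmoothCore_\alpha$, since it needs $2\dot\sfq$ to be H\"older and bounded below. The paper's comparison argument is self-contained and is phrased directly through the inverse profiles $\zeta_i$. That is what the paper later appeals to (``applied with generalized inverses'') when identifying the $L^1$-extension with the original $\psi$ on $\PathSpace_\infty$.
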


\begin{proof}
We adopt the following change of variable to put $\Phi^{\sfq_0}$ and 
$\Phi^{\sfq_1}$ on the common terminal time. 
Let $T=2(\sfq_0(1)\vee\sfq_1(1))$.
For all $i=0,1$, define
\begin{equation}
U_i(t,x)
=
\begin{cases}
\Phi^{2\sfq_i}(t,x)+\dfrac{T}{2}-\sfq_i(1), 
& 
(t,x)\in [0,2\sfq_i(1)]\times\R,
\\[1ex]
\log\cosh x+\dfrac{T-t}{2}, 
& (t,x)\in [2\sfq_i(1),T]\times\R.
\end{cases}
\label{eq:Ui-def}
\end{equation}
Then, \eqref{eq:Ui-def} yields
\begin{equation}
    \psi(\sfq^i)
    =
    \sfq^i(1)-\Phi^{2\sfq_i}(0,0)
    =
    \frac T2-U_i(0,0).
    \label{eq:psi-common-time}
\end{equation}
We next identify Parisi equation solved by \(U^i\).
For all $i=0,1$, define the extended inverse profiles \(\zeta_i:[0,T]\to[0,1]\) by
\[
    \zeta_i(t)
    =
    \begin{cases}
    \sfq_i^{-1}(t/2), & 0\le t\le 2\sfq_i(1),
    \\[1ex]
    1, & 2\sfq_i(1)<t\le T.
    \end{cases}
\]
Note that $\zeta_i$ is continuous and bounded.
Then \(U_i\) solves
\begin{equation}
\begin{cases}
-\partial_tU_i(t,x)
=
\frac12
\Bigl(
\partial_{xx}U_i(t,x)
+
\zeta_i(t)
\bigl(
    \partial_xU_i(t,x)
\bigr)^2
\Bigr), & (t,x)\in [0,T)\times\R,
\\[1ex]
U_i(T,x)=\log\cosh x,
& x\in\R.
\end{cases}
\label{eq:Ui-PDE}
\end{equation}
Define the operator 
\[
    \LinearOp_t
    =
    \frac12\partial_{xx}
    +
    \frac12\zeta_0(t)
    \bigl(\partial_xU_0+\partial_xU_1\bigr)\partial_x .
\]
and the coefficient
\[
    c(t,x)
    =
    \frac12
    \bigl(\zeta_0(t)-\zeta_1(t)\bigr)
    \bigl(\partial_xU_1(t,x)\bigr)^2.
\]
Subtracting the two equations in \eqref{eq:Ui-PDE} yields that difference $\tilde U = U_0-U_1$ satisfies the PDE
\[
\begin{cases}
-\partial_t\tilde U(t,x)
=
\LinearOp_t\tilde U(t,x)
+
c(t,x), 
&
(t,x)\in [0,T]\times\R,
\\[1ex]
\tilde U(T,x)=0,
&
x\in\R.
\end{cases}
\]
Define
\[
    R(t)
    =
    \frac12
    \int_t^T |\zeta_0(r)-\zeta_1(r)|\dd r,
    \qquad t\in [0,T].
\]
We now prove, by the weak maximum principle, that
\begin{equation}
    \sup_{x\in\R}\abs*{\tilde U(t,x)}\leq R(t),
    \qquad t\in [0,T].
\end{equation}
By Lemma~\ref{lem:parisi-schauder-class} and \eqref{eq:Ui-def},
\(U_i-\log\cosh\), \(i=0,1\), are bounded, so \(\tilde U\) is bounded.
Also, by Lemma~\ref{lem:Parisi-standard}(3), $|\partial_xU_i|\le1$ for $i=0,1$.
Thus
\[
    |c(t,x)|
    \le
    \frac12|\zeta^0(t)-\zeta^1(t)|.
\]
For the operator $\LinearOp_t$,
the coefficient of \(\partial_{xx}\) is \(1/2\), and the drift coefficient is bounded and
continuous because \(0\le \zeta_0\le1\) and \(|\partial_xU^i|\le1\), \(i=0,1\).
Therefore Theorem~\ref{thm:comparison}
applies on \([0,T]\times\mathbb R\), with \(a_0=0\). Since
\[
    (-\partial_t-\LinearOp_t)(R\pm\tilde U)
    =
    \frac12|\zeta_0-\zeta_1|\pm c
    \ge0,
    \qquad
    (R\pm \tilde U)(T,\cdot)=0,
\]
we get \(-R\leq \tilde U\le R\). 
Hence
\[
    |\tilde U(0,0)|
    \le
    \frac12\int_0^T |\zeta^0(t)-\zeta^1(t)|\dd t .
\]
Finally, using the identity
\[
    \int_0^T |\zeta^0(t)-\zeta^1(t)|\dd t
    =
    2\int_0^1 |\sfq_0(s)-\sfq_1(s)|\dd s,
\]
we conclude that
\[
|\psi(\sfq^0)-\psi(\sfq^1)|
=
|\tilde U(0,0)|
\le
\int_0^1 |\sfq_0(s)-\sfq_1(s)|\dd s.
\]
This proves the lemma.
\end{proof}

We now prove Theorem~\ref{thm:chen-initial-convexity}.

\begin{proof}[Proof of Theorem~\ref{thm:chen-initial-convexity}]
Using \eqref{eq:gamma-change-of-variables} with \(2\sfq\) in place of
\(\sfq\),
\begin{equation}
\label{eq:chen-initial-smooth-representation}
    \psi(\sfq)
    =
    \sfq(1)-\Phi^{2\sfq}(0,0)
    =
    \sfq(1)-\Psi^{2\dot\sfq}(0,0), 
    \qquad \sfq\in\SmoothCore_\alpha.
\end{equation}

We prove convexity on \(\SmoothCore_\alpha\).  Let
\(\sfq_0,\sfq_1\in\SmoothCore_\alpha\) and \(\theta\in[0,1]\), and set
\[
    \sfq_\theta=(1-\theta)\sfq_0+\theta\sfq_1\in\SmoothCore_\alpha.
\]
Since the map \(\gamma\mapsto\Psi^\gamma\) is \(C^2\) by
Theorem~\ref{thm:concavity}\eqref{item:C2}, the one-variable map
\[
    \theta\longmapsto \Psi^{2\dot\sfq^\theta}(0,0)
\]
is \(C^2\).  Its second derivative is
\[
    D^2\Psi^{2\dot\sfq^\theta}
    \bigl[2(\dot\sfq_1-\dot\sfq_0),2(\dot\sfq_1-\dot\sfq_0)\bigr](0,0),
\]
which is nonpositive by Theorem~\ref{thm:concavity}\eqref{item:concavity}.
Hence
\[
    \theta\longmapsto \Psi^{2\dot\sfq^\theta}(0,0)
\]
is concave.  Using \eqref{eq:chen-initial-smooth-representation} and the
affineness of \(\sfq\mapsto\sfq(1)\) on \(\SmoothCore_\alpha\), we obtain
\begin{equation}
    \psi(\sfq_\theta)
    \le
    (1-\theta)\psi(\sfq_0)
    +
    \theta\psi(\sfq_1).
    \label{eq:convexity-smooth-core}
\end{equation}
Thus \(\psi\) is convex on \(\SmoothCore_\alpha\).

By Lemma~\ref{lem:PathSpace-alpha-density} and Lemma~\ref{lem:smooth-core-L1-stability},
$\psi$ admits a unique $L^1$-continuous extension to $\PathSpace_1$.
By the same \(L^1\)-stability estimate, applied with generalized inverses, this extension
agrees on \(Q_\infty\) with the functional \(\psi(\sfq)=\sfq(1)-\Phi^{2\sfq}(0,0)\) defined in
\eqref{eq:def-psi}.
We still denote this extension
by \(\psi\).

It remains to prove convexity of the extension.
Let \(\sfq_0,\sfq_1\in\PathSpace_1\) and \(\theta\in[0,1]\).  By
Lemma~\ref{lem:PathSpace-alpha-density}, choose
\(\sfq_{i,n}\in\SmoothCore_\alpha\) such that
\[
    \sfq_{i,n}\to\sfq_i
    \qquad\text{in }L^1,
    \qquad i=0,1.
\]
Set
\[
    \sfq_{\theta,n}
    =
    (1-\theta)\sfq_{0,n}+\theta\sfq_{1,n},
    \qquad
    \sfq_\theta
    =
    (1-\theta)\sfq_0+\theta\sfq_1 .
\]
Then \(\sfq_{\theta,n}\in\SmoothCore_\alpha\) and
\(\sfq_{\theta,n}\to\sfq_\theta\) in \(L^1\).  
Applying \eqref{eq:convexity-smooth-core} yields
\[
    \psi(\sfq_{\theta,n})
    \le
    (1-\theta)\psi(\sfq_{0,n})
    +
    \theta\psi(\sfq_{1,n}).
\]
Letting \(n\to\infty\) and using the \(L^1\)-continuity of \(\psi\), we get
\[
    \psi(\sfq_\theta)
    \le
    (1-\theta)\psi(\sfq_0)
    +
    \theta\psi(\sfq_1).
\]
Thus the \(L^1\)-extension of \(\psi\) is convex on \(\PathSpace_1\), completing
the proof.
\end{proof}

\section*{Acknowledgements}

I warmly thank Jhih-Huang Li at National Taiwan University for his hospitality during my 2023 visit, and Jean-Christophe Mourrat and Victor Issa at ENS Lyon for their hospitality during my 2025 visit. I am grateful to Wai-Kit Lam for bringing Auffinger--Chen's work \cite{AuffingerChen16} to my attention during the former visit. 
I thank Victor Issa for suggesting the problem and for providing several insights, both during and after my visit to ENS Lyon.
I warmly thank Eliran Subag and Justin Ko for helpful advice on the early drafts of the paper.

I gratefully acknowledge support from Eliran Subag's grants: ISF grant No. 2055/21 and ERC grant No. 101165541.

During the preparation of this manuscript, I was made aware that Hong-Bin Chen
was independently investigating the convexity of the initial condition \(\psi\).

\end{document}